\renewcommand{\labelenumi}{$\mathrm{(\roman{enumi})}$}
\title{Compact exact Lagrangian intersections in cotangent bundles via sheaf quantization}
\author{Yuichi Ike}
\date{\today}
\begin{document}
\maketitle

\begin{abstract}
	We show that the cardinality of the transverse intersection of two compact exact Lagrangian submanifolds in a cotangent bundle is bounded from below by the dimension of the Hom space of sheaf quantizations of the Lagrangians in Tamarkin's category.
	Our sheaf-theoretic method can also deal with clean and degenerate Lagrangian intersections.
\end{abstract}


\section{Introduction}

The study of Lagrangian intersections, especially intersections of exact Lagrangian submanifolds in cotangent bundles is an important problem in symplectic geometry.
In this paper, we study them using a method based on microlocal sheaf theory, more precisely, Tamarkin's category and Guillermou's sheaf quantization.
We state our main result and its corollary in \pref{subsec:intromain}.

\subsection{Applications of microlocal sheaf theory to symplectic geometry}

Microlocal sheaf theory was introduced and systematically developed by Kashiwara and Schapira~\cite{KS90}.
One of the key ingredients of the theory is the notion of microsupports of sheaves.
In the sequel, let $\bfk$ be a field.
Moreover, let $X$ be a $C^\infty$-manifold and denote by $\Db(X)$ the bounded derived category of sheaves of $\bfk$-vector spaces.
For an object $F \in \Db(X)$, its microsupport $\MS(F)$ is defined as the set of directions in which the cohomology of $F$ cannot be extended isomorphically.
The microsupport is a closed subset of the cotangent bundle $T^*X$ and conic, that is, invariant under the action of $\bR_{>0}$ on $T^*X$.

Tamarkin~\cite{Tamarkin} proposed a new approach to symplectic geometry, which is based on microlocal sheaf theory.
A sheaf whose microsupport coincides with a given conic Lagrangian submanifold of a cotangent bundle (outside the zero-section) is called a \emph{sheaf quantization} of the Lagrangian.
For a non-conic Lagrangian, one can consider a sheaf quantization by adding a variable and ``conifying" it.
Using sheaf quantizations, Tamarkin studied the intersections of particular Lagrangian submanifolds.
After his work, Guillermou--Kashiwara--Schapira~\cite{GKS} and Guillermou~\cite{Gu12, Gulec} proved the existence of sheaf quantizations of graphs of Hamiltonian isotopies and compact exact Lagrangian submanifolds in cotangent bundles, respectively.
See \pref{sec:quantandnondisp} for more details.
Note that sheaf-theoretic approaches to symplectic geometry also appeared in \cite{KO,NZ,Nad}.

\subsection{Our results}\label{subsec:intromain}

In this paper, we prove that the cardinality of the transverse intersection of compact exact Lagrangian submanifolds in a cotangent bundle is bounded from below by the dimension of the Hom space of sheaf quantizations of the Lagrangians in Tamarkin's category.
More generally, provided $\bfk=\bF_2=\bZ/2\bZ$, we show that a clean version of the estimate holds with ``cardinality" replaced by ``total $\bF_2$-Betti number".

In what follows, let $M$ be a compact connected $C^\infty$-manifold without boundary and denote by $T^*M$ its cotangent bundle.
We also denote by $(x;\xi)$ a local homogeneous coordinate system.
We regard $T^*M$ as an exact symplectic manifold equipped with the Liouville 1-form $\alpha=\langle \xi, dx \rangle$.
A submanifold $L$ of dimension $\dim M$ in $T^*M$ is said to be exact Lagrangian if $\alpha|_L$ is exact.
The main result of this paper is the following.
See \pref{sec:quantandnondisp} for the definitions of simple sheaf quantizations, $\cHom^\star$, and Tamarkin's category $\cT(M)$.

\begin{theorem}[{see \pref{thm:cleanineq}}]\label{thm:intromain}
	For $i=1,2$, let $L_i$ be a compact connected exact Lagrangian submanifold and $F_i \in \Db(M \times \bR)$ be a simple sheaf quantization associated with $L_i$ and a function $f_i \colon L_i \to \bR$ satisfying $df_i=\alpha|_{L_i}$ .
	Assume that $L_1$ and $L_2$ intersect cleanly, that is, $L_1 \cap L_2$ is a submanifold of $T^*M$ and $T_p(L_1 \cap L_2)=T_pL_1 \cap T_p L_2$ for any $p \in L_1 \cap L_2$.
	Let $L_1 \cap L_2 = \bigsqcup_{j=1}^n C_j$ be the decomposition into connected components and define $f_{21}(C_j)\coloneqq f_2(p)-f_1(p)$ for some $p \in C_j$ (independent of the choice of $p$).
	Moreover, let $a,b \in \bR$ with $a<b$ or $a \in \bR, b=+\infty$.
	Then, for $\bfk=\bF_2=\bZ/2\bZ$, one has
	\begin{equation}
	\begin{split}
	& \sum_{a \le f_{21}(C_j) <b} \; \sum_{k \in \bZ} \dim_{\bF_2} H^k(C_j;\bF_2) \\
	& \qquad \ge
	\sum_{k \in \bZ} \dim_{\bF_2}
	H^k \RG_{M \times [a,b)}((-\infty,b);\cHom^\star(F_2,F_1)).
	\end{split}
	\end{equation}
	In particular,
	\begin{equation}
	\sum_{j=1}^n \sum_{k \in \bZ} \dim_{\bF_2} H^k(C_j;\bF_2) \ge
	\sum_{k \in \bZ} \dim_{\bF_2} \Hom_{\cT(M)}(F_2,F_1[k]).
	\end{equation}
	If $L_1$ and $L_2$ intersect transversally, the inequalities hold for any field $\bfk$, not only for $\bF_2$.
\end{theorem}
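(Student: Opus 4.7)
The plan is to microlocally analyze the sheaf $G := \cHom^\star(F_2,F_1)$ on $M \times \bR$, establish a Morse--Bott type local structure for $G$ near each clean intersection component $C_j$, and assemble the local contributions into a bound on $H^*\RG_{M \times [a,b)}((-\infty,b);G)$ via a height-filtration spectral sequence.

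First, I would compute $\MS(G)$. Using the general operations on microsupports and the defining property of simple sheaf quantizations, one expects that outside the zero section the projection of $\MS(G)$ to $T^*M$ lies in $L_1 \cup L_2 \cup (L_1 \cap L_2)$, while the extra $\bR$-coordinate $t$ records the primitive difference $f_1 - f_2$. In particular, the projection of $\MS(G)$ to $\bR_t$ is concentrated at the critical values $\{f_{21}(C_j)\}_{j=1}^{n}$, so $G$ is locally constant in $t$ between consecutive such values.

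Next, near each $C_j$ at height $t = f_{21}(C_j)$, I would use Guillermou's classification of simple sheaf quantizations and a microlocal Morse--Bott reduction in the directions transverse to $T(L_1 \cap L_2)$ to show that $G$ contains a direct summand locally isomorphic to $\bfk_{C_j \times [f_{21}(C_j), +\infty)}$, possibly twisted by an orientation datum on the normal bundle of $C_j$ inside $L_i$. Over $\bfk = \bF_2$ this twist trivialises, giving an honest direct summand; in the transverse case each $C_j$ is a point, so no orientation obstruction appears and the identification holds over any field.

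Finally, I would filter $(-\infty,b)$ by the height $t$ and invoke the associated spectral sequence for $\RG_{M \times [a,b)}((-\infty,b);G)$. The local decomposition of the previous paragraph identifies the $E_1$-page with $\bigoplus_{a \le f_{21}(C_j) < b} H^*(C_j;\bfk)$, and since the total $\bfk$-dimension of the abutment is bounded above by that of the $E_1$-page, the first inequality follows. The second inequality is the specialization $a \to -\infty$, $b = +\infty$, combined with the identification of global sections of $\cHom^\star(F_2,F_1)$ in Tamarkin's category with $\Hom_{\cT(M)}(F_2,F_1[\bullet])$. The principal obstacle is the local normal form at a clean (non-transverse) $C_j$: extracting a controlled Morse--Bott description of $G$ from the microlocal data of two simple sheaf quantizations whose Lagrangians meet cleanly is a delicate local calculation, and it is precisely here that the hypothesis $\bfk = \bF_2$ must be tracked, since normal-bundle orientation data over a general field can obstruct the direct-summand splitting.
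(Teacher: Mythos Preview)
Your overall architecture---filter $\cHom^\star(F_2,F_1)$ by the height $t$ and bound the abutment by the $E_1$-page---matches the paper's use of the Morse--Bott inequality for sheaves applied to $H=\cHom^\star(F_2,F_1)$ and the projection $t\colon M\times\bR\to\bR$. The microsupport estimate you sketch is also essentially what the paper proves (though your description ``projection to $T^*M$ lies in $L_1\cup L_2\cup(L_1\cap L_2)$'' is not quite right; what matters is that $\Gamma_{dt}\cap\MS(H)$ projects into the finite set $\{f_{21}(C_j)\}$).

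The real divergence, and the gap, is in the local step. You assert that near level $c=f_{21}(C_j)$ the object $G$ itself contains a direct summand of the form $\bfk_{C_j\times[c,+\infty)}$. This is not what is needed, is not proved in the paper, and would be hard to justify: there is no reason for the extensions among contributions from different components (or from the same component at different cohomological degrees) to split in $\Db(M\times\bR)$. What the Morse--Bott inequality actually requires is the \emph{local cohomology at level $c$},
\[
\RG\bigl(M\times\{c\};\RG_{M\times[c,+\infty)}(H)\big|_{M\times\{c\}}\bigr),
\]
and the paper computes this not by decomposing $H$ but by passing to $\mu hom$: one shows (via microlocalization along $M\times\{0\}$ after translating by $T_c$) that this local cohomology is isomorphic to $\RG(\Omega_+;\mu hom(T_{c*}F_2,F_1)|_{\Omega_+})$. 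Then $\mu hom(T_{c*}F_2,F_1)|_{\Omega_+}$ is supported on the conifications $\wh{C_j}$ with $f_{21}(C_j)=c$, and---this is the delicate point---is shown to be locally constant of rank $1$ there by applying a quantized contact transformation that simultaneously rectifies both $\Lambda_1$ and $\Lambda_2$ to conormals of graphs, reducing to an explicit Morse--Bott stalk computation. Only at this last stage does $\bfk=\bF_2$ enter, to force a rank-$1$ local system on $\wh{C_j}$ to be constant. Your phrase ``microlocal Morse--Bott reduction in the directions transverse to $T(L_1\cap L_2)$'' gestures at this, but the mechanism (passage to $\mu hom$ plus a quantized contact transform) is absent from your sketch.

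A minor point: for the second inequality one does not take $a\to-\infty$; rather one uses the identifications $\Hom_{\cD(M)}(F_2,T_{c*}F_1[k])\simeq H^k\RG_{M\times[0,+\infty)}(M\times\bR;\cHom^\star(F_2,T_{c*}F_1))$ and $\Hom_{\cT(M)}\simeq\varinjlim_c\Hom_{\cD(M)}(F_2,T_{c*}F_1)$, so the relevant case is $a=0,\ b=+\infty$ applied uniformly in $c$.
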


We also have 
\begin{equation}\label{eq:introisomcoh}
\Hom_{\cT(M)}(F_2,F_1[k])
\simeq
H^k(M;\cL)
\quad \text{for any $k \in \bZ$},
\end{equation}
where $\cL$ is the locally constant sheaf of rank $1$ on $M$ associated with $F_1$ and $F_2$  (see \pref{prp:cohlagr} for details).
Combining this with \pref{thm:intromain}, we obtain a purely sheaf-theoretic proof of the following result of Nadler~\cite{Nad} and Fukaya--Seidel--Smith~\cite{FSS08}, as a corollary.

\begin{corollary}[{\cite[Thm.~1.3.1]{Nad} and \cite[Thm.~1]{FSS08}}]
	Let $L_1$ and $L_2$ be compact connected exact Lagrangian submanifolds of $T^*M$ intersecting transversally.
	Then
	\begin{equation}
	\#(L_1 \cap L_2) \ge \sum_{k \in \bZ} \dim H^k(M;\cL)
	\end{equation}
	for any rank $1$ locally constant sheaf $\cL$ on $M$ over any field $\bfk$.
	In particular, $\#(L_1 \cap L_2) \ge \sum_{k \in \bZ} \dim H^k(M;\bfk)$.
\end{corollary}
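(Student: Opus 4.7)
The plan is to specialise \pref{thm:intromain} to the transverse case and combine it with the identification \eqref{eq:introisomcoh} of the Hom space with the cohomology of $M$ with coefficients in a rank 1 local system.

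Since $L_1$ and $L_2$ meet transversally, each connected component $C_j$ of $L_1\cap L_2$ is a single point, so $H^0(C_j;\bfk)=\bfk$ and $H^k(C_j;\bfk)=0$ for $k\ne 0$. The total Betti sum on the left-hand side of the ``in particular'' inequality in \pref{thm:intromain} therefore collapses to $\#(L_1\cap L_2)$. Moreover, the last sentence of \pref{thm:intromain} guarantees that the inequality is valid over any field $\bfk$ in the transverse setting. We thus immediately obtain
\begin{equation*}
\#(L_1\cap L_2)\;\ge\;\sum_{k\in\bZ}\dim_{\bfk}\Hom_{\cT(M)}(F_2,F_1[k])
\end{equation*}
for any simple sheaf quantizations $F_1,F_2$ of $L_1,L_2$.

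Applying \eqref{eq:introisomcoh} replaces the right-hand side with $\sum_{k\in\bZ}\dim H^k(M;\cL)$, where $\cL$ is the rank 1 local system on $M$ determined by $F_1$ and $F_2$. To deduce the inequality for an \emph{arbitrary} rank 1 local system $\cL$ on $M$, it remains to check that every such $\cL$ is realised by a suitable choice of sheaf quantizations; this is done by twisting $F_2$ (say) by the pullback to $L_2$ of a rank 1 local system on $M$, or equivalently by modifying the primitive $f_2$, using the flexibility in Guillermou's construction of simple sheaf quantizations. The final statement $\#(L_1\cap L_2)\ge\sum_k\dim H^k(M;\bfk)$ is then the special case $\cL=\bfk_M$.

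The hard part is not the sheaf-theoretic inequality itself---this is a direct corollary of \pref{thm:intromain} once the right-hand side is rewritten---but the verification that the assignment $(F_1,F_2)\mapsto\cL$ is surjective onto rank 1 local systems on $M$. This surjectivity hinges on an explicit description of $\cL$ in terms of the primitives $f_i$ and the holonomy data of the $F_i$, which is the content of the proposition referenced around \eqref{eq:introisomcoh}.
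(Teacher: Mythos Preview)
Your approach is the same as the paper's: combine the transverse case of \pref{thm:intromain} with the identification \eqref{eq:introisomcoh}. The paper's own proof is a one-line citation of \pref{prp:cohlagr} and \pref{thm:cleanineq}.

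There is, however, a misconception in your last two paragraphs. The surjectivity of $(F_1,F_2)\mapsto\cL$ is not ``the hard part''; it is immediate. By \pref{prp:cohlagr} the local system in \eqref{eq:introisomcoh} is $\cL=\cL_1\otimes\cL_2^{\otimes -1}$ with $\cL_i=(F_i)_+$, and Guillermou's existence theorem (\pref{thm:Guiquan}) lets you prescribe $\cL_i$ freely; taking $\cL_1=\cL$ and $\cL_2=\bfk_M$ already realizes every rank~1 local system. Concretely, the simple sheaf quantization with $(F_i)_+\simeq\cL_i$ is $F_i=F^{\mathrm{can}}_i\otimes q_M^{-1}\cL_i$ (see the remark after \pref{thm:Guiquan}), so the twist is by the pullback of a local system on $M$ along $q_M\colon M\times\bR\to M$, not along a map to $L_2$. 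Your alternative suggestion, ``modifying the primitive $f_2$'', is incorrect: changing $f_2$ by a constant only translates the conification $\wh{L_2}$ in the $t$-direction and does not affect $(F_2)_+$.
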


The proof of \pref{thm:intromain} goes as follows.
First, we apply the Morse--Bott inequality for sheaves (see \pref{thm:morsebottineq}) to $H\coloneqq \cHom^\star(F_2,F_1)$ and the function $M \times \bR \to \bR, (x,t) \mapsto t$, and obtain
\begin{equation}\label{eq:introineqloccoh}
\begin{split}
& \sum_{a \le c<b} \sum_{k \in \bZ} \dim H^k \RG \left( M \times \{c\};\RG_{M \times [c,+\infty)}(H)|_{M \times \{ c \}} \right)  \\
& \qquad \ge
\sum_{k \in \bZ} \dim H^k \RG_{M \times [a,b)}(M \times (-\infty,b);H).
\end{split}
\end{equation}
In order to calculate the left-hand side of \eqref{eq:introineqloccoh}, we use the functor $\mu hom \colon \Db(X)^{\mathrm{op}} \times \Db(X) \to \Db(T^*X)$ introduced by Kashiwara--Schapira~\cite{KS90}.
Using the functor, we show the isomorphism
\begin{equation}\label{eq:introisommuhom}
\RG \left( M \times \{c\};\RG_{M \times [c,+\infty)}(H)|_{M \times \{ c \}} \right)
\simeq
\RG(\Omega_+;\mu hom({T_c}_*F_2,F_1)|_{\Omega_+}),
\end{equation}
where $T_c \colon M \times \bR \to M \times \bR, (x,t) \mapsto (x,t+c)$ and $\Omega_+\coloneqq \{\tau >0 \} \subset T^*(M \times \bR)$ with $(t;\tau)$ being the homogeneous symplectic coordinate on $T^*\bR$.
The object $\mu hom({T_c}_*F_2,F_1)|_{\Omega_+}$ is supported in $\{ (x,t;\tau \xi,\tau) \mid \tau >0, (x;\xi) \in L_1 \cap L_2, t=f_2(x;\xi)-f_1(x;\xi)=c \}$ and isomorphic to a shift of the constant sheaf of rank $1$ on the support.
This completes the proof.

\begin{remark}
	Even if the intersection is degenerate, \eqref{eq:introineqloccoh} and \eqref{eq:introisommuhom} still hold, but the object $\mu hom({T_c}_*F_2,F_1)|_{\Omega_+}$ is not necessarily locally constant on the support.
	In this sense, the family of sheaves $\{\mu hom({T_c}_*F_2,F_1)|_{\Omega_+}\}_c$ encodes the ``contribution" from each possibly degenerate component of the intersection $L_1 \cap L_2$.
	We will also explore the contribution in degenerate cases in \pref{sec:nonclean}.
\end{remark}

\subsection{Relation to Lagrangian intersection Floer theory}

Although our approach is purely sheaf-theoretic, it seems to be closely related to Floer cohomology and Fukaya categories.
We briefly remark on the relation below.
Tamarkin's category $\cT(M)$ has the following properties:
\begin{enumerate}
	\item Hamiltonian invariance (\cite{Tamarkin, GS14}),
	\item the dimension of the cohomology of the clean intersection of two compact exact Lagrangian submanifolds is bounded from below by the dimension of the Hom space of simple sheaf quantizations (\pref{thm:intromain}).
\end{enumerate}
Moreover, as pointed out by T.~Kuwagaki, the following also holds in $\cT(M)$:
\begin{itemize}
	\item[(iii)] a simple sheaf quantization associated with any compact connected exact Lagrangian submanifold is isomorphic to a simple sheaf quantization associated with the zero-section of $T^*M$ (see \pref{prp:isomT}).
\end{itemize}

The Floer cohomology $\mathrm{HF}^*(L_2,L_1)$ has similar properties to (i) and (ii), though the approach is totally different.
Floer cohomology for clean Lagrangian intersections was studied by Po{\'z}niak~\cite{Poz99}, Frauenfelder~\cite{Frauenfelder}, Fukaya--Oh--Ohta--Ono~\cite{FOOO09,FOOO092}, and Schm{\"a}schke~\cite{schmaschke2016floer}.
Moreover, Nadler~\cite{Nad} and Fukaya--Seidel--Smith~\cite{FSS08,FSS09} proved the following, which corresponds to (iii): in the infinitesimal Fukaya category of $T^*M$, any relative spin compact connected exact Lagrangian submanifold of $T^*M$ with vanishing Maslov class is isomorphic to a shift of the zero-section.
Note that their assumptions of relative spin and vanishing Maslov class can be removed, thanks to results of Abouzaid~\cite{Abouzaid}, and Abouzaid--Kragh~\cite{Kragh}, respectively.
We also remark that Guillermou~\cite{Gu12,Gulec} gave a sheaf-theoretic proof for the relative spin property and the vanishing of the Maslov class.

During the preparation of this paper, C.~Viterbo announced\footnote{In a seminar at IMJ-PRG on 10 October 2016.} that he had found some relation between $\cHom^\star(F_2,F_1)$ and the Floer cochain complex $\mathrm{CF}(L_2,L_1)$.

\subsection{Outline of this paper}

This paper is organized as follows.
In \pref{sec:preliminaries}, we recall the microlocal sheaf theory due to Kashiwara and Schapira~\cite{KS90}.
In \pref{sec:quantandnondisp} we review the results of \cite{Tamarkin,GKS,GS14,Gu12,Gulec} on Tamarkin's non-displaceability theorem, and sheaf quantization of Hamiltonian isotopies and compact exact Lagrangian submanifolds in cotangent bundles.
In \pref{sec:intersection} we prove the isomorphism~\eqref{eq:introisomcoh} and \pref{thm:intromain}.
In \pref{sec:nonclean} we briefly remark that our method can deal with degenerate Lagrangian intersections, using very simple examples.
In \pref{sec:appfunctorial} we prove the ``functoriality" of simple sheaf quantizations with respect to Hamiltonian diffeomorphisms.
In \pref{sec:appasano} by Tomohiro~Asano, we relate the shift of a simple sheaf quantization of a Lagrangian to the grading in Lagrangian intersection Floer cohomology theory.

\subsection*{Acknowledgments}
\addcontentsline{toc}{subsection}{Acknowledgments}

The author wishes to express his sincere gratitude to St{\'e}phane~Guillermou for many helpful discussions.
He is also very grateful to Pierre~Schapira for many enlightening discussions and helpful advice and to Tomohiro~Asano for many fruitful discussions and kindly writing \pref{sec:appasano}.
The author also thanks Vincent~Humili{\`e}re and Tatsuki~Kuwagaki for many stimulating discussions.
He expresses his gratitude to IMJ-PRG and ``equipe Analyse Alg{\'e}brique" for hospitality during the preparation of this paper.
This work was supported by a Grant-in-Aid for JSPS Fellows 15J07993 and the Program for Leading Graduate Schools, MEXT, Japan.

\section{Preliminaries on microlocal sheaf theory}\label{sec:preliminaries}

In this paper, all manifolds are assumed to be real manifolds of class $C^\infty$ without boundary.
Throughout this paper, let $\bfk$ be a field.

In this section we recall some definitions and results from \cite{KS90}.
We mainly follow the notation in \cite{KS90}.
Until the end of this section, let $X$ be a $C^\infty$-manifold without boundary.

\subsection{Geometric notions (\cite[\S4.3, \S A.2]{KS90})}\label{subsec:geometric}

For a locally closed subset $A$ of $X$, we denote by $\overline{A}$ its closure and by $\Int(A)$ its interior.
We also denote by $\Delta_X$ or simply $\Delta$ the diagonal of $X \times X$.
We denote by $\tau_X \colon TX \to X$ the tangent bundle of $X$, and by $\pi_X \colon T^*X \to X$ the cotangent bundle of $X$.
If there is no risk of confusion, we simply write $\tau$ and $\pi$ instead of $\tau_X$ and $\pi_X$, respectively.
For a submanifold $M$ of $X$, one denotes by $T_MX$ the normal bundle to $M$ in $X$, and by $T^*_MX$ the conormal bundle to $M$ in $X$.
In particular, $T^*_XX$ denotes the zero-section of $T^*X$.
We set $\rT X\coloneqq T^*X \setminus T^*_XX$.
For two subsets $S_1$ and $S_2$ of $X$, we denote by $C(S_1,S_2) \subset TX$ the normal cone of the pair $(S_1,S_2)$.

Let $f \colon X \to Y$ be a morphism of manifolds.
With $f$ we associate the following morphisms and commutative diagram:
\begin{equation}\label{diag:fpifd}
\begin{split}
\xymatrix{
	T^*X \ar[d]_-{\pi_X} & X \times_Y T^*Y \ar[d]^-\pi \ar[l]_-{f_d} \ar[r]^-{f_\pi} & T^*Y \ar[d]^-{\pi_Y} \\
	X \ar@{=}[r] & X \ar[r]_-f & Y,
}
\end{split}
\end{equation}
where $f_\pi$ is the projection and $f_d$ is induced by the transpose of the tangent map $f' \colon TX \to X \times_Y TY$.

We denote by $(x;\xi)$ a local homogeneous coordinate system on $T^*X$.
The cotangent bundle $T^*X$ is an exact symplectic manifold with the Liouville 1-form $\alpha=\langle \xi, dx \rangle$.
We denote by $a \colon T^*X \to T^*X,(x;\xi) \mapsto (x;-\xi)$ the antipodal map.
For a subset $A$ of $T^*X$, we denote by $A^a$ its image under the map $a$.
We also denote by $\bfh \colon T^*T^*X \simto TT^*X$ the Hamiltonian isomorphism given in local coordinates by $\bfh(dx_i)=-\partial / \partial\xi_i$ and $\bfh(d\xi_i)=\partial / \partial x_i$.

\subsection{Microsupports of sheaves (\cite[\S5.1, \S5.4, \S6.1]{KS90})}\label{subsec:microsupport}

We denote by $\bfk_X$ the constant sheaf with stalk $\bfk$ and by $\Module(\bfk_X)$ the abelian category of sheaves of $\bfk$-vector spaces on $X$.
Moreover, we denote by $\Db(X)=\Db(\Module(\bfk_X))$ the bounded derived category of $\Module(\bfk_X)$.
One can define Grothendieck's six operations between derived categories of sheaves $\cRHom,\allowbreak \otimes, \allowbreak Rf_*,\allowbreak f^{-1},\allowbreak Rf_!,\allowbreak f^!$ for a morphism of manifolds $f \colon X \to Y$.
Since we work over the field $\bfk$, we simply write $\otimes$ instead of $\lten$.
Moreover, for $F \in \Db(X)$ and $G \in \Db(Y)$, we define their external tensor product $F \boxtimes G \in \Db(X \times Y) $ by $F \boxtimes G \coloneqq q_X^{-1}F \otimes q_Y^{-1}G$, where $q_X \colon X \times Y \to X$ and $q_Y \colon X \times Y \to Y$ are the projections.
For a locally closed subset $Z$ of $X$, we denote by $\bfk_Z$ the zero-extension of the constant sheaf with stalk $\bfk$ on $Z$ to $X$, extended by $0$ on $X \setminus Z$.
Moreover, for a locally closed subset $Z$ of $X$ and $F \in \Db(X)$, we define $F_Z, \RG_Z(F) \in \Db(X)$ by
\begin{equation}
F_Z\coloneqq F \otimes \bfk_Z, \quad \RG_Z(F)\coloneqq \cRHom(\bfk_Z,F).
\end{equation}
One denotes by $\omega_X \in \Db(X)$ the dualizing complex on $X$, that is, $\omega_X\coloneqq a_X^!\bfk$, where $a_X \colon X \to \pt$ is the natural morphism.
Note that $\omega_X$ is isomorphic to $\ori_X[\dim X]$, where $\ori_X$ is the orientation sheaf on $X$.
More generally, for a morphism of manifolds $f \colon X \to Y$, we denote by $\omega_f=\omega_{X/Y}\coloneqq f^!\bfk_Y \simeq \omega_X \otimes f^{-1}\omega_Y^{\otimes -1}$ the relative dualizing complex.

Let us recall the definition of the \emph{microsupport} $\MS(F)$ of $F \in \Db(X)$.

\begin{definition}[{\cite[Def.~5.1.2]{KS90}}]\label{def:microsupport}
	Let $F \in \Db(X)$ and $p \in T^*X$.
	One says that $p \not\in \MS(F)$ if there is a neighborhood $U$ of $p$ in $T^*X$ such that for any $x_0 \in X$ and any $C^\infty$-function $\varphi$ on $X$ (defined on a neighborhood of $x_0$) satisfying $d\varphi(x_0) \in U$, one has $\RG_{\{\varphi \ge \varphi(x_0)\}}(F)_{x_0} \simeq 0$.
\end{definition}

One can check the following properties:
\begin{enumerate}
	\item The microsupport of an object in $\Db(X)$ is a conic (i.e., invariant under the action of $\bR_{>0}$ on $T^*X$) closed subset of $T^*X$.
	\item For an object $F \in \Db(X)$, one has $\MS(F) \cap T^*_XX=\pi(\MS(F))=\Supp(F)$.
	\item The microsupports satisfy the triangle inequality: if $F_1 \to F_2 \to F_3 \overset{+1}{\to}$ is a distinguished triangle in $\Db(X)$, $\MS(F_i) \subset \MS(F_j) \cup \MS(F_k)$ for $j \neq k$.
\end{enumerate}
We also use the notation $\mathring{\MS}(F)\coloneqq \MS(F) \cap \rT X=\MS(F) \setminus T^*_XX$.

We denote by $\BD(X)=\BD(\Module(\bfk_X))$ the (unbounded) derived category of sheaves of $\bfk$-vector spaces on $X$.
An object $F \in \BD(X)$ is said to be locally bounded if for any relatively compact open subset $U$ of $X$, one has $F|_U \in \Db(U)$.
We denote by $\Dlb(X)$ the full subcategory of $\BD(X)$ consisting of locally bounded objects.
The microsupport of an object in $\Dlb(X)$ can be defined in totally the same way as in \pref{def:microsupport}, since it is a local notion.

\begin{example}
	(i) If $F$ is a non-zero locally constant sheaf on a connected manifold $X$, then $\MS(F)=T^*_XX$.
	Conversely, if $\MS(F) \subset T^*_XX$ then the cohomology sheaves $H^k(F)$ are locally constant for all $k \in \bZ$.
	\smallskip
	
	\noindent (ii) Let $M$ be a closed submanifold of $X$.
	Then $\MS(\bfk_M)=T^*_MX \subset T^*X$.
	\smallskip
	
	\noindent (iii) Let $\varphi \colon X \to \bR$ be a $C^\infty$-function and assume that $d\varphi(x) \neq 0$ for any $x \in \varphi^{-1}(0)$.
	Set $U\coloneqq \{x \in X \mid \varphi(x)>0\}$ and $Z\coloneqq \{x \in X\mid \varphi(x) \ge 0\}$.
	Then
	\begin{equation}
	\begin{split}
	\MS(\bfk_U)=T^*_XX|_U \cup
	\{(x;\lambda d\varphi(x)) \mid \varphi(x)=0, \lambda \le 0\}, \\
	\MS(\bfk_Z)=T^*_XX|_Z \cup
	\{(x;\lambda d\varphi(x)) \mid \varphi(x)=0, \lambda \ge 0\}.
	\end{split}
	\end{equation}
\end{example}

The following proposition is called (a particular case of) the microlocal Morse lemma.
See \cite[Prop.~5.4.17 and Cor.~5.4.19]{KS90} for more details.
The classical theory corresponds to the case $F$ is the constant sheaf $\bfk_X$.

\begin{proposition}\label{prp:microlocalmorse}
	Let $F \in \Db(X)$ and $\varphi \colon X \to \bR$ be a $C^\infty$-function.
	Moreover, let $a,b \in \bR$ with $a<b$ or $a \in \bR, b=+\infty$.
	Assume that 
	\begin{enumerate}
		\renewcommand{\labelenumi}{$\mathrm{(\arabic{enumi})}$}
		\item $\varphi$ is proper on $\Supp(F)$,
		\item  $d\varphi(x) \not\in \MS(F)$ for any $x \in \varphi^{-1}([a,b))$.
	\end{enumerate}
	Then the canonical morphism
	\begin{equation}
	\RG(\varphi^{-1}((-\infty,b));F)
	\lto
	\RG(\varphi^{-1}((-\infty,a));F)
	\end{equation}
	is an isomorphism.
\end{proposition}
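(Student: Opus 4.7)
The plan is to pass from the pointwise definition of microsupport, which controls local cohomology along level sets of test functions with specified differentials, to the stated global statement via a sublevel-set argument combined with properness. Writing $U_s := \varphi^{-1}((-\infty, s))$, the open inclusion $U_a \hookrightarrow U_b$ yields the distinguished triangle
\[
\RG_{\varphi^{-1}([a,b))}(U_b; F) \lto \RG(U_b; F) \lto \RG(U_a; F) \overset{+1}{\lto},
\]
so the claim reduces to showing $\RG_{\varphi^{-1}([a,b))}(U_b; F) \simeq 0$. Equivalently, I would introduce
\[
I := \{ s \in [a,b] \mid \RG(U_b; F) \to \RG(U_s; F) \text{ is an isomorphism}\}
\]
(with the convention $U_{+\infty} := X$ when $b = +\infty$) and show $a \in I$.

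The main technical step is a local one: for every $s_0 \in [a, b)$ I would prove that there is $\epsilon > 0$ such that the restriction $\RG(U_{s_0 + \epsilon}; F) \to \RG(U_{s_0}; F)$ is an isomorphism. Hypothesis (1) makes $K_{s_0} := \Supp(F) \cap \varphi^{-1}(\{s_0\})$ compact, and at each $x_0 \in K_{s_0}$ hypothesis (2) gives $d\varphi(x_0) \not\in \MS(F)$; applying \pref{def:microsupport} with the test function $\psi(x) := \varphi(x) - s_0$ then yields $\RG_{\{\varphi \ge s_0\}}(F)_{x_0} \simeq 0$. A finite cover of $K_{s_0}$ by neighborhoods on which the microsupport condition applies uniformly (using that $\MS(F)$ is closed and conic and that $d\varphi$ is continuous) upgrades this stalk-wise vanishing to a uniform vanishing on a slab $\varphi^{-1}([s_0, s_0+\epsilon))$, and properness propagates it to the whole slab. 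The distinguished triangle
\[
\RG_{\varphi^{-1}([s_0, s_0+\epsilon))}(U_{s_0+\epsilon}; F) \lto \RG(U_{s_0+\epsilon}; F) \lto \RG(U_{s_0}; F) \overset{+1}{\lto}
\]
then gives the desired local isomorphism.

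With the local step in hand, I would conclude by an interval argument on $I$. The set $I$ contains $b$, the local step shows that $I$ is open to the left in $[a,b]$, and closedness of $I$ under decreasing limits follows from the fact that cohomology over the increasing union $U_{s_0} = \bigcup_{s > s_0} U_s$ is computed as the appropriate limit of cohomologies over $U_s$ (properness of $\varphi|_{\Supp F}$ ensures no pathology arises from non-compactness, and handles the case $b = +\infty$). Consequently $\inf I \in I$ and must equal $a$, yielding the stated isomorphism.

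The main obstacle is the bootstrap in the local step: passing from the stalk vanishing $\RG_{\{\varphi \ge \varphi(x_0)\}}(F)_{x_0} \simeq 0$ provided by \pref{def:microsupport} to the vanishing of sections supported on a full slab. This requires exploiting compactness of $K_{s_0}$ together with the closed conic structure of $\MS(F)$ to make the local vanishing uniform in a neighborhood, and then using properness to control behavior outside a compact set. This is precisely where both hypotheses (1) and (2) enter essentially.
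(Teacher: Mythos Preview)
The paper does not give its own proof of this proposition; it is quoted from \cite[Proposition~5.4.17 and Corollary~5.4.19]{KS90} as background. Your outline is essentially the standard argument found there: reduce via the excision triangle to vanishing on a slab, use the definition of $\MS(F)$ together with properness to get local constancy of the assignment $s \mapsto \RG(U_s;F)$, and finish by a connectedness argument on $[a,b]$.

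There is, however, a concrete error in your closedness step. You write ``$U_{s_0} = \bigcup_{s > s_0} U_s$'', but since $U_s = \varphi^{-1}((-\infty,s))$ is increasing in $s$, one has $\bigcap_{s > s_0} U_s = \varphi^{-1}((-\infty,s_0])$ and $\bigcup_{s > s_0} U_s = U_b$, neither of which equals $U_{s_0}$. So the limit argument you invoke for closedness of $I$ under decreasing sequences does not apply. Relatedly, your local step as stated only produces an isomorphism $\RG(U_{s_0+\epsilon};F) \to \RG(U_{s_0};F)$ for a single $\epsilon$; this is not enough to run the interval argument, because ``open to the left'' requires controlling $\epsilon(s)$ uniformly near a given point, and closedness from above is not available from a filtered-colimit identity.

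The fix is to strengthen the local step to a two-sided statement: using that $\MS(F)$ is closed and $d\varphi$ continuous, the microsupport condition actually gives, for each $s_0 \in [a,b)$, an $\epsilon>0$ such that $\RG_{\{\varphi \ge s\}}(F)_x \simeq 0$ for every $x$ with $\varphi(x)=s \in (s_0-\epsilon, s_0+\epsilon)$; compactness of the relevant fibers of $\varphi|_{\Supp(F)}$ then makes $s \mapsto \RG(U_s;F)$ locally constant on an open interval around $s_0$. With this, $I$ is both open and closed in $[a,b]$, and connectedness finishes the proof. Equivalently (and this is closer to how \cite{KS90} phrases it), push forward by $\varphi$ and show that $R\varphi_*F$ is locally constant on $[a,b)$ using the bound on microsupports of proper direct images.
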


By using microsupports, we can microlocalize the category $\Db(X)$.
Let $A \subset T^*X$ be a subset and set $\Omega=T^*X \setminus A$.
We denote by $\Db_{A}(X)$ the subcategory of $\Db(X)$ consisting of sheaves whose microsupports are contained in $A$.
By the triangle inequality, the subcategory $\Db_{A}(X)$ is a triangulated subcategory.
We define $\Db(X;\Omega)$ as the localization of $\Db(X)$ by $\Db_A(X)$: $\Db(X;\Omega)\coloneqq \Db(X)/\Db_A(X)$.
A morphism $u \colon F \to G$ in $\Db(X)$ becomes an isomorphism in $\Db(X;\Omega)$ if $u$ is embedded in a distinguished triangle $F \overset{u}{\to} G \to H \overset{+1}{\to}$ with $\MS(H) \cap \Omega=\emptyset$.
For a closed subset $B$ of $\Omega$, $\Db_B(X;\Omega)$ denotes the full triangulated subcategory of $\Db(X;\Omega)$ consisting of $F$ with $\MS(F) \cap \Omega \subset B$.
In the case $\Omega=\{p\}$ with $p \in T^*X$, we simply write $\Db(X;p)$ instead of $\Db(X;\{p\})$.
Note that our notation is the same as in \cite{KS90} and slightly differs from that of \cite{Gu12,Gulec}.

\subsection{Functorial operations (\cite[\S5.4]{KS90})}

We consider bounds for the microsupports of proper direct images, non-characteristic inverse images, and $\cRHom$.

\begin{definition}[{\cite[Def.~5.4.12]{KS90}}]
	Let $f \colon X \to Y$ be a morphism of manifolds and $A$ be a closed conic subset of $T^*Y$.
	The morphism $f$ is said to be \emph{non-characteristic} for $A$ if
	\begin{equation}
	f_\pi^{-1}(A) \cap f_d^{-1}(T^*_XX) \subset X \times_Y T^*_YY.
	\end{equation}
\end{definition}

See \eqref{diag:fpifd} for the notation $f_\pi$ and $f_d$.
In particular, any submersion from $X$ to $Y$ is non-characteristic for any closed conic subset of $T^*Y$.
Note that submersions are called smooth morphisms in \cite{KS90}.
One can show that if $f \colon X \to Y$ is non-characteristic for a closed conic subset $A$ of $T^*Y$, then $f_d f_\pi^{-1}(A)$ is a closed conic subset of $T^*X$.

\begin{theorem}[{\cite[Prop.~5.4.4 and Prop.~5.4.13]{KS90}}]\label{thm:operations}
	Let $f \colon X \to Y$ be a morphism of manifolds, $F \in \Db(X)$, and $G \in \Db(Y)$.
	\begin{enumerate}
		\item Assume that $f$ is proper on $\Supp(F)$.
		Then $\MS(Rf_*F) \subset f_\pi f_d^{-1}(\MS(F))$.
		\item Assume that $f$ is non-characteristic for $\MS(G)$.
		Then the canonical morphism \linebreak $f^{-1}G \otimes \omega_{f} \to f^!G$ is an isomorphism and $\MS(f^{-1}G) \cup \MS(f^!G) \subset f_d f_\pi^{-1}(\MS(G))$.
	\end{enumerate}
\end{theorem}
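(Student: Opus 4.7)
The plan is to prove both parts by reducing a general morphism $f \colon X \to Y$ to the composition $f = q_Y \circ \iota_f$, where $\iota_f \colon X \hookrightarrow X \times Y$ is the graph embedding and $q_Y \colon X \times Y \to Y$ is the projection. Since graph embeddings are always non-characteristic (the image of $f_d$ lies in the zero-section in the $Y$-direction), the hypotheses of (i) and (ii) propagate correctly through this decomposition. Once this reduction is set up, one treats closed embeddings and smooth projections separately; the microsupport inclusion $f_\pi f_d^{-1}(\MS(F))$ behaves functorially under composition, so the two local cases combine to give the general statement. Throughout I will work directly from \pref{def:microsupport}, testing the stalk $\RG_{\{\varphi \ge \varphi(x_0)\}}(\,\cdot\,)_{x_0}$ against well-chosen $C^\infty$-functions.

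For (i) I would first dispose of the closed embedding case: if $i \colon X \hookrightarrow Y$ is closed, then $Ri_* F = i_! F$ is supported on $X$, and for $(y_0; \eta_0) \in T^*Y$ with $y_0 \in X$ and a test function $\psi$ on $Y$ with $d\psi(y_0) = \eta_0$, one computes $\RG_{\{\psi \ge 0\}}(i_!F)_{y_0} \simeq \RG_{\{\psi|_X \ge 0\}}(F)_{y_0}$ using excision. The hypothesis $\eta_0 \notin i_\pi i_d^{-1}(\MS(F))$ is precisely $d(\psi|_X)(y_0) = i_d i_\pi^{-1}(\eta_0) \notin \MS(F)$, so the stalk vanishes. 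For the projection $q_Y \colon X \times Y \to Y$ with $q_Y$ proper on $\Supp(F)$, I would fix $(y_0; \eta_0) \notin (q_Y)_\pi (q_Y)_d^{-1}(\MS(F))$; concretely this means $((x, y_0); (0, \eta_0)) \notin \MS(F)$ for every $x \in X$. Apply proper base change to rewrite
\begin{equation}
\RG_{\{\psi \ge 0\}}(Rq_{Y*}F)_{y_0} \simeq R\Gamma\bigl(X \times \{y_0\};\, \RG_{\{\psi \circ q_Y \ge 0\}}(F)|_{X \times \{y_0\}}\bigr),
\end{equation}
and then use \pref{prp:microlocalmorse} applied fiberwise together with the properness of $q_Y$ on $\Supp(F)$ to see that the right-hand side is zero. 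This uses a standard compactness/covering argument on the compact fiber intersected with support.

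For (ii) the non-characteristic hypothesis is preserved by the graph factorization, so it suffices to handle submersions and closed embeddings separately. For a submersion $p$, the map $p^{-1}$ is exact, $\omega_p$ is invertible of degree $\dim X - \dim Y$, and the isomorphism $p^{-1}G \otimes \omega_p \simto p^!G$ is classical; the microsupport bound is immediate since $p_d p_\pi^{-1}$ just pulls back the covector along $p$. The substantive case is a closed embedding $i \colon X \hookrightarrow Y$ that is non-characteristic for $\MS(G)$, i.e.\ $\MS(G) \cap T^*_XY \subset T^*_YY$. Here the isomorphism $i^{-1}G \otimes \omega_i \simto i^!G$ follows from the distinguished triangle $\bfk_{Y \setminus X} \to \bfk_Y \to \bfk_X \xrightarrow{+1}$ applied to $\cRHom(-,G)$: the non-characteristic hypothesis forces $\cRHom(\bfk_{Y \setminus X}, G)|_X \simeq 0$ by a microlocal Morse argument using \pref{prp:microlocalmorse} with a tubular neighborhood's defining function. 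The microsupport bound $\MS(i^{-1}G), \MS(i^!G) \subset i_d i_\pi^{-1}(\MS(G))$ is then verified directly from \pref{def:microsupport}: for $(x_0;\xi_0) \notin i_d i_\pi^{-1}(\MS(G))$ and a test $C^\infty$-function $\varphi$ on $X$, extend $\varphi$ to a function $\tilde\varphi$ on $Y$ with $d\tilde\varphi(x_0)$ chosen so that $d\tilde\varphi(x_0) \notin \MS(G)$ — possible precisely because of non-characteristicity — and conclude $\RG_{\{\tilde\varphi \ge \tilde\varphi(x_0)\}}(G)_{x_0} \simeq 0$, which after restriction yields the desired vanishing for $i^{-1}G$ (and for $i^!G$ via the identification just established).

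The main obstacle will be the non-characteristic embedding case in (ii), specifically producing the extension $\tilde\varphi$ of $\varphi$ whose differential avoids $\MS(G)$ and constructing the isomorphism $i^{-1}G \otimes \omega_i \simto i^!G$ with full control over microsupports. This is genuinely delicate because the condition $i_\pi^{-1}(\MS(G)) \cap i_d^{-1}(T^*_XX) \subset X \times_Y T^*_YY$ is a transversality condition at the boundary of the ``good'' region, and one must combine it with a careful deformation of the test function in a small conic neighborhood so that \pref{prp:microlocalmorse} applies along the whole deformation. Once this step is executed, both microsupport inclusions drop out uniformly and the theorem follows by composing the graph embedding with the projection.
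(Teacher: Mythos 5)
The paper offers no proof of this statement: it is quoted verbatim from [KS90, Propositions~5.4.4 and 5.4.13], so your proposal can only be compared with the standard argument there. Your global strategy --- factor $f$ through the graph embedding followed by the projection, then treat closed embeddings and submersions separately --- is exactly the standard one, and your part (i) is essentially sound: for the projection, every point of the fiber $X\times\{y_0\}$ lies on the critical level of $\psi\circ q_Y$, whose differential there is the vertical covector $(0,\eta_0)\notin\MS(F)$, so all stalks of $\RG_{\{\psi\circ q_Y\ge 0\}}(F)$ along the fiber vanish by the very definition of the microsupport; properness is only needed for the base-change step and to make $f_\pi f_d^{-1}(\MS(F))$ closed so that a whole neighborhood of test covectors is covered.

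Part (ii) contains a genuine error. You derive $i^{-1}G\otimes\omega_i\simto i^!G$ from the triangle $\bfk_{Y\setminus X}\to\bfk_Y\to\bfk_X\overset{+1}{\to}$ by claiming that non-characteristicity forces $\cRHom(\bfk_{Y\setminus X},G)|_X\simeq 0$. This is false, and if it were true it would prove the wrong formula: the triangle would then give $i^!G\simeq i^{-1}G$ with no shift, contradicting $\omega_i\simeq\ori_{X/Y}[\dim X-\dim Y]$. Concretely, take $Y=\bR$, $X=\{0\}$, $G=\bfk_Y$ (non-characteristic since $\MS(G)=T^*_YY$): the stalk of $\cRHom(\bfk_{Y\setminus X},G)=Rj_*j^{-1}G$ at $0$ is $\varinjlim_{\varepsilon}\RG((-\varepsilon,\varepsilon)\setminus\{0\};\bfk)\simeq\bfk^2\neq 0$, while $i^!\bfk_Y\simeq\bfk[-1]$. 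What non-characteristicity actually yields, in coordinates where $X=\{x_n=0\}$, is the vanishing of the stalks of $\RG_{\{x_n\ge 0\}}(G)$ and $\RG_{\{x_n\le 0\}}(G)$ along $X$ (test with $\varphi=\pm x_n$); feeding this into the triangle $\RG_{\{x_n=0\}}(G)\to\RG_{\{x_n\ge 0\}}(G)\oplus\RG_{\{x_n\le 0\}}(G)\to G\overset{+1}{\to}$ gives $i^!G=(\RG_XG)|_X\simeq i^{-1}G[-1]$ in codimension one, and one iterates on the codimension. Relatedly, your microsupport bound for $i^{-1}G$ passes from $\RG_{\{\tilde\varphi\ge 0\}}(G)_{x_0}$ to $\RG_{\{\varphi\ge 0\}}(i^{-1}G)_{x_0}$ ``after restriction'', but these complexes are not isomorphic in general; making that comparison is the actual technical content of [KS90, Proposition~5.4.13] and requires the cutoff/normal-deformation machinery rather than a direct restriction of sections.
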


\begin{proposition}[{\cite[Prop.~5.4.2]{KS90}}]\label{prp:SSprod}
	For $i=1,2$, let $X_i$ be a manifold and denote by $q_i$ the projection $X_1 \times X_2 \to X_i$.
	Moreover, let $F_i \in \Db(X_i)$ for $i=1,2$.
	Then 
	\begin{equation}
	\MS(\cRHom(q_2^{-1}F_2,q_1^{-1}F_1))
	\subset
	\MS(F_1) \times \MS(F_2)^a.
	\end{equation}
\end{proposition}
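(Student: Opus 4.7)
The plan is to reduce the claim to the standard Kashiwara-Schapira estimate on the microsupport of a sheaf $\cRHom$, after first pinning down the microsupports of the two pullbacks. Because each projection $q_i$ is a submersion, it is non-characteristic for every closed conic subset of $T^*X_i$, so \pref{thm:operations}~(ii) applied to $q_1$ and $q_2$ gives
\[
	\MS(q_1^{-1}F_1) \subset \MS(F_1) \times T^*_{X_2}X_2,
	\qquad
	\MS(q_2^{-1}F_2) \subset T^*_{X_1}X_1 \times \MS(F_2).
\]
The right-hand sides consist respectively of purely ``horizontal'' and purely ``vertical'' covectors on $X_1 \times X_2$.

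Next I would verify the transversality condition needed to invoke the general estimate for the microsupport of an $\cRHom$. A covector lying in both inclusions above must have both horizontal and vertical components vanishing, hence
\[
	\MS(q_1^{-1}F_1) \cap \MS(q_2^{-1}F_2) \subset T^*_{X_1 \times X_2}(X_1 \times X_2).
\]
Under this transversality, the Kashiwara-Schapira bound \cite{KS90} yields
\[
	\MS(\cRHom(q_2^{-1}F_2, q_1^{-1}F_1)) \subset \MS(q_1^{-1}F_1) + \MS(q_2^{-1}F_2)^a,
\]
where $+$ denotes the fiberwise Minkowski sum in $T^*(X_1 \times X_2)$. A fiberwise computation at $(x_1,x_2)$ gives $(\xi_1,0) + (0,-\xi_2) = (\xi_1,-\xi_2)$ with $(x_1,\xi_1) \in \MS(F_1)$ and $(x_2,-\xi_2) \in \MS(F_2)^a$, so the right-hand side equals exactly $\MS(F_1) \times \MS(F_2)^a$, which is the desired inclusion.

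The main obstacle would be giving a self-contained proof that bypasses the general $\cRHom$-microsupport estimate, since that result is itself a nontrivial application of the microlocal Morse lemma over a product. A direct approach would start from \pref{def:microsupport}: for $p = ((x_1^0,x_2^0);(\xi_1^0,\xi_2^0))$ outside $\MS(F_1) \times \MS(F_2)^a$, say with $(x_1^0,\xi_1^0) \notin \MS(F_1)$, one must show the vanishing of the local cohomology $\RG_{\{\psi \ge 0\}}(\cRHom(q_2^{-1}F_2, q_1^{-1}F_1))_{(x_1^0,x_2^0)}$ for every test function $\psi$ with $d\psi(x_1^0,x_2^0)$ close to $p$. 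The delicate point is that a general $\psi$ does not split as $\psi_1(x_1)+\psi_2(x_2)$, so one must employ a slicing argument in the $x_2$-direction, combined with the conic invariance of microsupport, to reduce to the defining vanishing condition for $F_1$ alone.
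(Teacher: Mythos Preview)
The paper does not supply its own proof of this proposition: it is quoted verbatim from \cite[Proposition~5.4.2]{KS90} and then used as an input. So there is no ``paper's proof'' to match; the relevant comparison is with the logical order the paper adopts.

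On that score, your main argument is circular. You invoke the general bound
\[
\MS(\cRHom(G,F)) \subset \MS(G)^a + \MS(F)
\]
under the transversality hypothesis, which is exactly \pref{prp:SSHom} (\cite[Proposition~5.4.14]{KS90}). But the paper states immediately after \pref{prp:SSprod} that \pref{prp:SSHom} is \emph{deduced from} \pref{prp:SSprod} together with \pref{thm:operations}~(ii) applied to the diagonal embedding $\delta \colon X \to X \times X$. In Kashiwara--Schapira's development the product estimate \cite[5.4.2]{KS90} is the primitive statement, and the single-space $\cRHom$ estimate \cite[5.4.14]{KS90} is derived from it by restricting along the diagonal. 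So your first paragraph proves the easier result from the harder one.

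You are clearly aware of this, since your final paragraph sketches a direct argument from the definition of microsupport. That sketch is in the right spirit and is essentially how \cite{KS90} proceeds: one exploits that on a product one can test with functions $\psi(x_1,x_2)=\varphi_1(x_1)+\varphi_2(x_2)$, and reduce the local-cohomology vanishing to the vanishing for $F_1$ (or $F_2$) alone via a K\"unneth-type identification of $\cRHom(q_2^{-1}F_2,q_1^{-1}F_1)$ on small product boxes. Your remark that a general test function does not split is correct, but this is handled by the standard refinement step (working in $\Db(X;p)$ one may replace $\psi$ by a split function with the same $1$-jet), not by a genuine slicing argument. If you want a self-contained proof here, that is the step to make precise.
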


Using \pref{prp:SSprod} and \pref{thm:operations}(ii) for the diagonal embedding $\delta \colon X \to X \times X$, one can prove the following:

\begin{proposition}[{\cite[Prop.~5.4.14~(ii)]{KS90}}]\label{prp:SSHom}
	Let $F, G \in \Db(X)$ and assume that $\MS(F) \cap \MS(G) \subset T^*_XX$.
	Then
	\begin{equation}
	\MS(\cRHom(F,G)) \subset \MS(F)^a+\MS(G),
	\end{equation}
	where $+$ is the fiberwise sum.
\end{proposition}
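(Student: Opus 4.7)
The plan is to express $\cRHom(F,G)$ as the inverse image under the diagonal of an external Hom on $X \times X$, and then combine the two ingredients highlighted in the hint. Write $q_1, q_2 \colon X \times X \to X$ for the projections and $\delta \colon X \to X \times X$ for the diagonal, so that $q_i \circ \delta = \mathrm{id}_X$ for $i = 1, 2$.

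First I would record the canonical isomorphism
\[
\cRHom(F,G) \;\simeq\; \delta^{-1}\cRHom(q_2^{-1}F,\, q_1^{-1}G),
\]
obtained from the adjunction morphism $\delta^{-1}\cRHom(\cdot,\cdot) \to \cRHom(\delta^{-1}(\cdot), \delta^{-1}(\cdot))$ together with $\delta^{-1}q_i^{-1} \simeq \mathrm{id}$. Applying \pref{prp:SSprod} with $F_1 := G$ and $F_2 := F$ then yields the bound
\[
\MS\bigl(\cRHom(q_2^{-1}F, q_1^{-1}G)\bigr) \;\subset\; \MS(G) \times \MS(F)^a
\]
on $T^*(X \times X)$.

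Next I would verify that $\delta$ is non-characteristic for $\MS(G) \times \MS(F)^a$. Under the standard identification $X \times_{X \times X} T^*(X \times X) \simeq T^*X \oplus T^*X$, the map $\delta_\pi$ becomes the fiberwise inclusion and $\delta_d$ becomes the fiberwise sum $(x;\xi_1,\xi_2) \mapsto (x; \xi_1 + \xi_2)$. A point $(x;\xi_1,\xi_2)$ lying in $\delta_\pi^{-1}(\MS(G) \times \MS(F)^a) \cap \delta_d^{-1}(T^*_XX)$ satisfies $(x;\xi_1) \in \MS(G)$, $(x;-\xi_2) \in \MS(F)$ and $\xi_1 + \xi_2 = 0$; eliminating $\xi_2$ gives $(x;\xi_1) \in \MS(F) \cap \MS(G) \subset T^*_XX$, forcing $\xi_1 = \xi_2 = 0$. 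Applying \pref{thm:operations}~(ii) to $\delta^{-1}$ and using the formula for $\delta_d$ then yields
\[
\MS(\cRHom(F,G)) \;\subset\; \delta_d \delta_\pi^{-1}\bigl(\MS(G) \times \MS(F)^a\bigr) \;=\; \MS(F)^a + \MS(G).
\]

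The only genuinely delicate step is justifying the first isomorphism, since commutation of $\delta^{-1}$ with $\cRHom$ is not formal for an arbitrary closed embedding. Once the non-characteristic condition has been verified in the third step, however, this commutation can be obtained either by a local microlocal argument in a neighborhood of the diagonal, or equivalently by replacing $\delta^{-1}$ with $\delta^!$ up to a twist by $\omega_{X/X \times X}$ and invoking the same \pref{thm:operations}~(ii). Either way, the remaining computation is exactly the bookkeeping carried out above.
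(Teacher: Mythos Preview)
Your argument is correct and follows exactly the route the paper indicates: apply \pref{prp:SSprod} to the external $\cRHom$ on $X\times X$ and then \pref{thm:operations}~(ii) to the diagonal embedding $\delta$, the non-characteristicity of $\delta$ being precisely the hypothesis $\MS(F)\cap\MS(G)\subset T^*_XX$. You have in fact supplied more detail than the paper, which leaves the verification of non-characteristicity and the commutation of $\delta^{-1}$ with $\cRHom$ to the reader; your remark that the latter can be handled via $\delta^!$ and a twist by $\omega_{X/X\times X}$ is the cleanest way to make that step rigorous.
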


\subsection{Non-proper direct images (\cite{Tamarkin, GS14})}\label{subsec:nonproper}

We consider estimates of the microsupports of non-proper direct images in special cases.
Let $V_1$ and $V_2$ be finite-dimensional real vector spaces and consider a constant linear map $u \colon X \times V_1 \to X \times V_2$.
That is, we assume that there exists a linear map $u_V \colon V_1 \to V_2$ satisfying $u=\id_X \times u_V$.
The map $u$ induces the maps
\begin{equation}\label{diag:nonproper}
\begin{split}
\xymatrix{
	& T^*X \times V_1 \times V_2^* \ar[ld]_-{u_d} \ar[rd]^-{u_\pi} & \\
	T^*X \times V_1 \times V_1^* \ar[rd]_-{v_\pi} & & T^*X \times V_2 \times V_2^* \ar[ld]^-{v_d} \\
	& T^*X \times V_2 \times V_1^*. &
}
\end{split}
\end{equation}
Note that for a subset $A$ of $T^*(X \times V_1)$, we have $u_\pi(u_d^{-1}(A))=v_d^{-1}(v_\pi(A))$.

\begin{definition}
	Let $u \colon X \times V_1 \to X \times V_2$ be a constant linear map and $A \subset T^*(X \times V_1)$ be a closed subset.
	One sets
	\begin{equation}
	u_\sharp(A)\coloneqq v_d^{-1}\left(\overline{v_\pi(A)}\right).
	\end{equation}
\end{definition}

\begin{proposition}[{\cite[Lem.~3.3]{Tamarkin} and \cite[Thm.~1.16]{GS14}}]\label{prp:nonproper}
	Let $u \colon X \times V_1 \to X \times V_2$ be a constant linear map and $F \in \Db(X \times V_1)$.
	Then
	\begin{equation}
	\MS(Ru_*F) \cup \MS(Ru_!F)
	\subset u_\sharp(\MS(F)).
	\end{equation}
\end{proposition}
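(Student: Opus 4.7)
My plan is to reduce the statement to the special case where $u$ is a projection onto the base, via a graph factorization of $u$, and then to prove the projection case by combining the microsupport criterion of \pref{def:microsupport} with an exhaustion of the non-compact fiber.

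I would first factor $u = p \circ \Gamma_u$, where $\Gamma_u \colon X \times V_1 \to X \times V_1 \times V_2$, $(x,v_1) \mapsto (x,v_1,u_V(v_1))$, is the graph embedding and $p \colon X \times V_1 \times V_2 \to X \times V_2$ is the projection forgetting the $V_1$-factor. Since $\Gamma_u$ is a closed embedding it is proper, so \pref{thm:operations}~(i) applies and gives $\MS(R\Gamma_{u*}F) = \MS(R\Gamma_{u!}F) \subset (\Gamma_u)_\pi (\Gamma_u)_d^{-1}(\MS F)$, which in the coordinates of \eqref{diag:nonproper} is the closed set $A = \{(x,\xi,v_1,\eta_1,u_V(v_1),\eta_2) : (x,\xi,v_1,\eta_1 + u_V^*\eta_2) \in \MS F\}$. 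A direct chase through \eqref{diag:nonproper} shows that $p_\sharp(A) = u_\sharp(\MS F)$, so everything reduces to proving the proposition for a projection $p \colon Y \times V \to Y$.

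For such a projection and $G \in \Db(Y \times V)$, the set $p_\sharp(\MS G)$ simplifies to $\{(y;\xi) : (y;\xi,0) \in \overline{v_\pi(\MS G)}\}$, where $v_\pi(\MS G) \subset T^*Y \times V^*$ is the image of $\MS G$ under the map that drops the $V$-coordinate. Fix $(y_0;\xi_0) \notin p_\sharp(\MS G)$; I want $(y_0;\xi_0) \notin \MS(Rp_*G)$. Pick a smooth $\varphi$ near $y_0$ with $d\varphi(y_0) = \xi_0$ and aim for the vanishing of $R\Gamma_{\{\varphi \ge \varphi(y_0)\}}(Rp_*G)_{y_0}$. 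Base change rewrites this as $R\Gamma(\{y_0\} \times V;\, R\Gamma_{\{\tilde\varphi \ge \varphi(y_0)\}}G)$ with $\tilde\varphi := \varphi \circ p$, and the identity $d\tilde\varphi(y,v) = (d\varphi(y),0)$ together with an open conic neighborhood of $(y_0;\xi_0,0)$ disjoint from $\overline{v_\pi(\MS G)}$ shows that $d\tilde\varphi$ avoids $\MS G$ along the whole fiber.

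The main obstacle is that $\tilde\varphi$ is not proper on $\Supp G$, so \pref{prp:microlocalmorse} does not apply directly. I would treat this by exhausting $V$ by closed balls $\overline{B}_r$: for each $r$ apply \pref{prp:microlocalmorse} to $G|_{Y \times \overline{B}_r}$, whose microsupport picks up extra conormal contributions along $\partial B_r$ in $V^*$-directions of positive norm. The closure in the definition of $p_\sharp$ is precisely what allows one to shrink the neighborhood of $(y_0;\xi_0,0)$ so that those conormal contributions never approach the tested codirection $(d\varphi,0)$, uniformly in $r$. Passing to the limit $r \to \infty$ and using the commutation of $R\Gamma$ with the resulting inverse system yields the vanishing for $Rp_*G$; the symmetric argument with open balls and compactly supported sections handles $Rp_!G$. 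The delicate point is to ensure that the boundary microsupport introduced by each cutoff stays inside the forbidden region dictated by the hypothesis, which is exactly what the closure operation in the definition of $u_\sharp$ is designed to absorb.
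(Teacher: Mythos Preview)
The paper does not prove this proposition; it is quoted from \cite{Tamarkin} and \cite{GS14} without argument, so there is no in-paper proof to compare against. Your graph factorization $u = p \circ \Gamma_u$ is the standard reduction, and the identification $p_\sharp(A) = u_\sharp(\MS F)$ is a correct coordinate chase, so everything indeed comes down to the case of a projection $p \colon Y \times V \to Y$.

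The genuine gap is in your treatment of that projection for $Rp_*$. The ``base change'' step you invoke is false: for a non-proper map the stalk $(Rp_* H)_{y_0}$ is the inductive limit $\varinjlim_{U \ni y_0} R\Gamma(U \times V; H)$, not $R\Gamma(\{y_0\} \times V; H|_{\{y_0\}\times V})$, and these can differ precisely because $p$ is not proper on the support. (For $Rp_!$ proper base change does identify the stalk with compactly supported sections over the fiber, so that half of your sketch is on firmer ground.) Consequently your exhaustion by balls, as written, is not computing the object whose vanishing you need: even if each truncated pushforward has the right microsupport, you have not related their limit to $Rp_*G$ at the level of the stalk test in \pref{def:microsupport}. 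If you insist on a direct approach you must carry the neighborhood $U$ of $y_0$ through the whole argument and control the boundary contributions uniformly in both $r$ and $U$, which is substantially more delicate than what you wrote.

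The proofs in the cited references bypass this difficulty by using the microlocal cut-off lemma (cf.\ \cite[Proposition~5.2.3]{KS90}): membership in $\MS(F)$ is characterized by whether the natural morphism $\bfk_{\{0\}\times\gamma} \star F \to F$ is an isomorphism near the point in question, for suitable closed convex proper cones $\gamma$. This convolution criterion commutes with $Rp_*$ and $Rp_!$ via projection formulas and adjunctions, and the passage from $\gamma$ to its polar is exactly where the closure in the definition of $u_\sharp$ enters. That route avoids any non-proper stalk computation.
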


\subsection{Morse--Bott inequality for sheaves (\cite{ST92})}\label{subsec:morse-bott-inequality-for-sheaves}

In this subsection, we give the Morse--Bott inequality for sheaves, which is a slight generalization of the Morse inequality for sheaves by Kashiwara--Schapira~\cite[Prop.~5.4.20]{KS90} and was proved by Schapira--Tose~\cite{ST92}.
For a bounded complex $W$ of $\bfk$-vector spaces with finite-dimensional cohomology and $k \in \bZ$, we set
\begin{equation}
b_k(W)\coloneqq \dim H^k(W).
\end{equation}
Let $F \in \Db(X)$ and $\varphi \colon X \to \bR$ be a $C^\infty$-function.
We set
\begin{equation}
\Gamma_{d\varphi}
\coloneqq 
\{(x;d\varphi(x)) \mid x \in X\} \subset T^*X.
\end{equation}
We consider the following assumptions:
\begin{enumerate}
	\renewcommand{\labelenumi}{$\mathrm{(\arabic{enumi})}$}
	\item $\Supp(F) \cap \varphi^{-1}((-\infty,t])$ is compact for any $t \in \bR$,
	\item the set $\varphi(\pi(\MS(F) \cap \Gamma_{d\varphi}))$ is finite, say
	$\{c_1,\dots,c_N\}$ with $c_1<\dots<c_N$,
	\item the object
	\begin{equation}
	W_i\coloneqq \RG\left( \varphi^{-1}(c_i);\RG_{\{\varphi \ge c_i\}}(F)|_{\varphi^{-1}(c_i)} \right)
	\end{equation}
	has finite-dimensional cohomology for any $i=1,\dots,N$.
\end{enumerate}

\begin{theorem}[{\cite[Thm.~1.1]{ST92}, see also \cite[Prop.~5.4.20]{KS90}}]\label{thm:morsebottineq}
	Assume that \emph{(1)--(3)} are satisfied.
	Then
	\begin{enumerate}
		\item $\RG(X;F)$ has finite-dimensional cohomology,
		\item one has
		\begin{equation}\label{eq:morseineq}
			b_k(\RG(X;F)) \le \sum_{i=1}^N b_k(W_i)
		\end{equation}
		for any $k \in \bZ$.
	\end{enumerate}
\end{theorem}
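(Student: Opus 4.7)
The plan is to filter $\RG(X; F)$ by sublevel sets of $\varphi$ and use the microlocal Morse lemma (\pref{prp:microlocalmorse}) to identify each successive jump with a local contribution $W_i$.

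First, by hypothesis~(1), $\varphi$ is proper on $\Supp(F)$, so one pushes $F$ forward to $\bR$: set $G := R\varphi_* F \in \Db(\bR)$, and note that $\RG(X; F) \simeq \RG(\bR; G)$ and $\RG(\varphi^{-1}((-\infty,t)); F) \simeq \RG((-\infty,t); G)$. By \pref{thm:operations}~(i) combined with hypothesis~(2), the set of $t \in \bR$ with $(t;1) \in \MS(G)$ is contained in $\{c_1, \dots, c_N\}$, and hypothesis~(1) implies that $\Supp(G) \cap (-\infty,t]$ is compact for every $t$, hence that $\Supp(G)$ is bounded below. Accordingly, choose real numbers $a_0 < c_1 < a_1 < c_2 < \dots < c_N < a_N := +\infty$ with $a_0$ so small that $\Supp(G) \cap (-\infty, a_0) = \emptyset$, and such that each half-open interval $[a_{i-1}, a_i)$ contains only the critical value $c_i$.

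For each $i = 1, \dots, N$, the short exact sequence $0 \to \bfk_{(-\infty, a_{i-1})} \to \bfk_{(-\infty, a_i)} \to \bfk_{[a_{i-1}, a_i)} \to 0$ of sheaves on $\bR$ yields, upon applying $\RHom(-, G)$, a distinguished triangle
\[
\RG_{[a_{i-1}, a_i)}(\bR; G) \to \RG((-\infty, a_i); G) \to \RG((-\infty, a_{i-1}); G) \overset{+1}{\to}.
\]
The crucial identification is $\RG_{[a_{i-1}, a_i)}(\bR; G) \simeq W_i$. Using the adjunction $\varphi^{-1} \dashv R\varphi_*$ one has $R\varphi_* \RG_{\varphi \ge c_i}(F) \simeq \RG_{\ge c_i}(G)$, and proper base change identifies $W_i$ with the stalk $(\RG_{\ge c_i}(G))_{c_i}$. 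A direct computation shows that this stalk equals $\RG_{[c_i, c_i + \epsilon)}(\bR; G)$ for any small $\epsilon > 0$; then, applying \pref{prp:microlocalmorse} to $G$ on the critical-value-free intervals $[a_{i-1}, c_i)$ and $[c_i + \epsilon, a_i)$ allows one to extend the identification to $\RG_{[a_{i-1}, a_i)}(\bR; G)$.

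Writing $F_i := \RG((-\infty, a_i); G)$, one has $F_0 = 0$ and $F_N = \RG(X; F)$. Iterating the long exact sequences of the distinguished triangles above yields $b_k(F_i) \le b_k(W_i) + b_k(F_{i-1})$, and induction on $i$ gives
\[
b_k(\RG(X; F)) = b_k(F_N) \le \sum_{i=1}^N b_k(W_i)
\]
for every $k \in \bZ$. Finite-dimensionality of $\RG(X; F)$ follows by the same induction, starting from $F_0 = 0$ and using hypothesis~(3). The main obstacle will be the identification $\RG_{[a_{i-1}, a_i)}(\bR; G) \simeq W_i$, which requires combining proper base change with iterated application of the microlocal Morse lemma while verifying that the relevant microsupport conditions are preserved under the pushforward by $\varphi$.
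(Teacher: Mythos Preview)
Your argument is correct and follows essentially the same route as the paper, which simply refers to \cite[Proposition~5.4.20]{KS90} and records the key identification $\RG_{[t,+\infty)}(R\varphi_*F)_t \simeq \RG\bigl(\varphi^{-1}(t);\RG_{\{\varphi\ge t\}}(F)|_{\varphi^{-1}(t)}\bigr)$; your pushforward to $G=R\varphi_*F$, filtration by sublevel sets, and identification of the graded pieces with the $W_i$ is exactly the content of that argument spelled out in detail.
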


Note that \cite[Thm.~1.1]{ST92} is a stronger result than \pref{thm:morsebottineq}.
In this paper, we only use the weaker inequality~\eqref{eq:morseineq}.
The proof is the same as \cite[Prop.~5.4.20]{KS90}, since
\begin{equation}
\RG_{[t,+\infty)}(R\varphi_* F)_t
\simeq
\RG\left( \varphi^{-1}(t);\RG_{\{\varphi \ge t \}}(F)|_{\varphi^{-1}(t)} \right).
\end{equation}

\subsection{Kernels (\cite[\S3.6]{KS90})}\label{subsec:kernels}

For $i=1,2,3$, let $X_i$ be a manifold.
We write $X_{ij}\coloneqq X_i \times X_j$ and $X_{123}\coloneqq X_1 \times X_2 \times X_3$ for short.
We use the same symbol $q_i$ for the projections $X_{ij} \to X_i$ and $X_{123} \to X_i$.
We also denote by $q_{ij}$ the projection $X_{123} \to X_{ij}$.
Similarly, we denote by $p_{ij}$ the projection $T^*X_{123} \to T^*X_{ij}$.
One denotes by $p_{12^a}$ the composite of $p_{12}$ and the antipodal map on $T^*X_2$.

Let $A \subset T^*X_{12}$ and $B \subset T^*X_{23}$.
We set
\begin{equation}\label{eq:compset}
A \circ B
\coloneqq 
p_{13}(p_{12^a}^{-1}A \cap p_{23}^{-1}B) \subset T^*X_{13}.
\end{equation}
We define the composition of kernels as follows:
\begin{equation}
\begin{split}
\underset{X_2}{\circ} \colon \Db(X_{12}) \times \Db(X_{23}) & \to \Db(X_{13}) \\
(K_{12},K_{23}) & \mapsto K_{12} \underset{X_2}{\circ} K_{23}
\coloneqq 
R {q_{13}}_!\,(q_{12}^{-1}K_{12}\otimes q_{23}^{-1}K_{23}).
\end{split}
\end{equation}
If there is no risk of confusion, we simply write $\circ$ instead of $\underset{X_2}{\circ}$.
By \pref{thm:operations} and estimates of the microsupports of tensor products (see \cite[Prop.~5.4.14]{KS90}), we have the following proposition.

\begin{proposition}\label{prp:SScomp}
	Let $K_{ij} \in \Db(X_{ij})$ and set $\Lambda_{ij}\coloneqq \MS(K_{ij}) \subset T^*X_{ij} \ (ij=12,23)$.
	Assume 
	\begin{enumerate}
		\renewcommand{\labelenumi}{$\mathrm{(\arabic{enumi})}$}
		\item $q_{13}$ is proper on $q_{12}^{-1}\Supp(K_{12}) \cap q_{23}^{-1}\Supp(K_{23})$,
		\item $p_{12^a}^{-1}\Lambda_{12} \cap p_{23}^{-1}\Lambda_{23} \cap (T^*_{X_1}X_1 \times T^*X_2 \times T^*_{X_3}X_3) \subset T^*_{X_{123}}X_{123}$.
	\end{enumerate}
	Then
	\begin{equation}
	\MS(K_{12} \underset{X_2}{\circ} K_{23}) \subset
	\Lambda_{12} \circ \Lambda_{23}.
	\end{equation}
\end{proposition}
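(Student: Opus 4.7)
The plan is to decompose the composition $K_{12} \underset{X_2}{\circ} K_{23} = R{q_{13}}_!\,(q_{12}^{-1}K_{12} \otimes q_{23}^{-1}K_{23})$ into its constituent operations and apply the standard microsupport estimates in sequence: first the non-characteristic inverse image bound for $q_{12}^{-1}$ and $q_{23}^{-1}$, then the tensor product bound, and finally the proper direct image bound for $R{q_{13}}_!$. Matching the resulting estimate with the definition \eqref{eq:compset} of $\Lambda_{12} \circ \Lambda_{23}$ will then finish the proof.

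For the first step, $q_{12}$ and $q_{23}$ are projections, hence submersions, hence non-characteristic for any closed conic subset, so \pref{thm:operations}~(ii) bounds $\MS(q_{12}^{-1}K_{12})$ by the set of points $(x_1,\xi_1;x_2,\xi_2;x_3,0)$ with $(x_1,\xi_1;x_2,\xi_2) \in \Lambda_{12}$, and analogously $\MS(q_{23}^{-1}K_{23})$ by the set of points $(x_1,0;x_2,\xi_2;x_3,\xi_3)$ with $(x_2,\xi_2;x_3,\xi_3) \in \Lambda_{23}$. For the second step, the tensor product microsupport estimate \cite[Proposition~5.4.14~(i)]{KS90} requires that the antipodal intersection of the two sets above lies inside $T^*_{X_{123}}X_{123}$; a direct rewriting (essentially the sign change $\xi_2 \mapsto -\xi_2$) identifies this transversality condition with hypothesis~(2), and the resulting bound on $\MS(q_{12}^{-1}K_{12} \otimes q_{23}^{-1}K_{23})$ is the fiberwise sum of the two sets above. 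For the third step, hypothesis~(1) allows us to apply \pref{thm:operations}~(i) to $R{q_{13}}_! \simeq R{q_{13}}_*$, bounding $\MS(K_{12} \circ K_{23})$ by $q_{13,\pi}\,q_{13,d}^{-1}$ applied to that fiberwise sum.

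The final matching is a direct unfolding: a point $(x_1,\xi_1;x_3,\xi_3)$ belongs to $q_{13,\pi}\,q_{13,d}^{-1}$ of the fiberwise sum iff there exist $x_2$ and $\xi_2$ with $(x_1,\xi_1;x_2,\xi_2) \in \Lambda_{12}$ and $(x_2,-\xi_2;x_3,\xi_3) \in \Lambda_{23}$, which agrees with $\Lambda_{12} \circ \Lambda_{23}$ as defined in \eqref{eq:compset} after the rename $\xi_2 \leftrightarrow -\xi_2$. The only real obstacle is bookkeeping: the antipodal map enters in the tensor product bound, implicitly in the fiberwise cancellation $\xi_2 + (-\xi_2) = 0$ forced by $q_{13,d}^{-1}$, and already in the definition of $\circ$ on subsets of cotangent bundles; once these three occurrences are aligned, each step is a direct application of a cited result.
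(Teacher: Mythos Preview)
Your proposal is correct and follows essentially the same approach as the paper, which does not give a detailed proof but simply indicates that the result follows from \pref{thm:operations} together with the tensor product estimate \cite[Proposition~5.4.14]{KS90}. Your write-up spells out exactly this argument, including the bookkeeping of the antipodal map and the matching with \eqref{eq:compset}.
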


\subsection{Microlocalization and $\mu hom$ functors (\cite[\S4.3, \S4.4]{KS90})}

Let $M$ be a closed submanifold of $X$.
The microlocalization functor along $M$ is a functor $\mu_M \colon \Db(X) \to \Db(T^*_MX)$ (see \cite[\S4.3]{KS90} for more details).
Microlocalization is related to local cohomology as follows.
Let $p \in \rT X$ and $\varphi \colon X \to \bR$ be a $C^\infty$-function such that $\varphi(\pi(p))=0$ and $d \varphi(\pi(p))=p$.
Then, for $F \in \Db(X)$, we have 
\begin{equation}\label{eq:microlocalizationstalk}
\RG_{\{\varphi \ge 0 \}}(F)_{\pi(p)}
\simeq
\mu_{\varphi^{-1}(0)}(F)_{p}.
\end{equation}

Under suitable assumptions, the functoriality of microlocalization with respect to proper direct images and non-characteristic inverse images holds as follows:

\begin{proposition}[{\cite[Prop.~4.3.4 and Cor.~6.7.3]{KS90}}]\label{prp:mufunctorial}
	Let $f \colon X \to Y$ be a morphism of manifolds.
	Moreover, let $N$ be a closed submanifold of $Y$ and assume that $M=f^{-1}(N)$ is also a closed submanifold of $X$.
	Denote by $f_{Md} \colon M \times_N T^*_NY \to T^*_MX$ the morphism induced by $f_d$ and by $f_{M\pi} \colon M \times_N T^*_NY \to T^*_NY$ the morphism induced by $f_\pi$ (see \eqref{diag:fpifd}).
	\begin{enumerate}
		\item Let $F \in \Db(X)$.
		Assume that $f$ is proper on $\Supp(F)$ and $f_{Md} \colon M \times_N T^*_NY \to T^*_MX$ is surjective. 
		Then 
		\begin{equation}
		R{f_{M\pi}}_! f_{Md}^{-1} \, \mu_M(F) \simto \mu_N(Rf_*F).
		\end{equation}
		\item Let $G \in \Db(Y)$.
		Assume that $f$ is non-characteristic for $\MS(G)$ and $f|_M \colon M \to N$ is a submersion. 
		Then 
		\begin{equation}
		\mu_M(f^!G) \simto R {f_{Md}}_* f_{M\pi}^! \, \mu_N(G).
		\end{equation}
	\end{enumerate}	
\end{proposition}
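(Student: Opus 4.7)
The plan is to reduce both assertions to corresponding base change formulas for the specialization functor $\nu_M$, and then transfer them to $\mu_M$ via the Fourier--Sato transform, which by definition relates $\nu_M$ and $\mu_M$ as equivalences between conic derived categories on $T_M X$ and $T^*_M X$. The geometric setup is as follows: the hypothesis $M = f^{-1}(N)$ ensures that the restriction of the tangent map is a linear bundle map $Tf|_M \colon T_M X \to T_N Y$ over $f|_M \colon M \to N$, which decomposes as
\[
T_M X \xrightarrow{Tf_d} M \times_N T_N Y \xrightarrow{Tf_\pi} T_N Y,
\]
and the Fourier--Sato dual of this decomposition is precisely the cotangent diagram $T^*_M X \xleftarrow{f_{Md}} M \times_N T^*_N Y \xrightarrow{f_{M\pi}} T^*_N Y$ from the statement.

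Step 1 (base change for $\nu$). Using the deformation to the normal cone $\widetilde{X_M} \to \bR$, whose zero fibre is $T_M X$ and whose nonzero fibres are copies of $X$, together with the analogous $\widetilde{Y_N}$ and the canonical lift $\widetilde f \colon \widetilde{X_M} \to \widetilde{Y_N}$, I would apply proper base change under hypothesis (i) at the zero fibre to obtain
\[
R(Tf|_M)_! \, \nu_M(F) \simto \nu_N(Rf_* F),
\]
and non-characteristic base change under hypothesis (ii) to obtain
\[
\nu_M(f^! G) \simto (Tf|_M)^! \, \nu_N(G).
\]
Properness of $f$ on $\Supp(F)$ ensures properness of $\widetilde f$ on the corresponding subset, and the submersivity of $f|_M$ forces $Tf|_M$ to be a submersion on fibres, providing the cleanness required for the non-characteristic base change on the zero fibre.

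Step 2 (transfer via Fourier--Sato). Decomposing $Tf|_M = Tf_\pi \circ Tf_d$ and applying the compatibility of Fourier--Sato with pushforward and pullback along dual bundle maps, the first isomorphism of Step 1 becomes
\[
R{f_{M\pi}}_! \, f_{Md}^{-1} \, \mu_M(F) \simto \mu_N(Rf_* F),
\]
and the second becomes the formula in (ii). The surjectivity of $f_{Md}$ in (i) is precisely the condition needed for the Fourier--Sato transform of $(Tf_d)^{-1}$ to equal $R{f_{M\pi}}_!$ without loss of information.

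The main obstacle, in my view, is Step 2: although the heuristic ``Fourier--Sato swaps direct and inverse images along dual linear maps'' is well known, verifying it with the right conic support conditions and the surjectivity and submersion hypotheses takes some care, especially because the decomposition $Tf|_M = Tf_\pi \circ Tf_d$ involves maps over different bases. Step 1, by contrast, is a rather standard application of proper and non-characteristic base change on the deformation to the normal cone once that machinery is set up, so the real work is concentrated in the Fourier--Sato bookkeeping of Step 2.
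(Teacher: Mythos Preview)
The paper does not give its own proof of this proposition; it is quoted verbatim from \cite[Proposition~4.3.4 and Corollary~6.7.3]{KS90} as a preliminary result, so there is no argument in the paper to compare against. Your outline is in fact the strategy used in \cite{KS90}: one first proves the analogous functoriality for the specialization functor $\nu_M$ via the deformation to the normal cone (\cite[Proposition~4.2.4]{KS90}), and then applies the compatibility of the Fourier--Sato transform with direct and inverse images under dual linear bundle maps (\cite[Propositions~3.7.13--3.7.15]{KS90}) to obtain the statement for $\mu_M$. Your identification of Step~2 as the place where the bookkeeping is delicate is accurate; note in particular that the surjectivity of $f_{Md}$ is dual to the injectivity of $Tf_d$ on fibres, which is what guarantees the Fourier--Sato transform exchanges $R(Tf_d)_!$ with $f_{Md}^{-1}$ cleanly.
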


We also recall the functor $\mu hom$.
Let $q_1, q_2 \colon X \times X \to X$ be the projections.
We identify $T^*_{\Delta_X}(X \times X)$ with $T^*X$ through the first projection $(x,x;\xi,-\xi) \mapsto (x;\xi)$.

\begin{definition}[{\cite[Def.~4.4.1]{KS90}}]
	For $F, G \in \Db(X)$, one defines
	\begin{equation}
	\mu hom (F,G)
	\coloneqq 
	\mu_{\Delta_X}\cRHom(q_2^{-1}F,q_1^!G) 
	\in	\Db(T^*X).
	\end{equation}
\end{definition}

\begin{proposition}[{\cite[Prop.~4.4.2 and Prop.~4.4.3]{KS90}}]\label{prp:muhomprop}
	Let $F,G \in \Db(X)$.
	\begin{enumerate}
		\item $R \pi_* \mu hom(F,G) \simeq \cRHom(F,G)$.
		\item If $F$ is cohomologically constructible (see \emph{\cite[\S3.4]{KS90}} for the definition), then \linebreak $R \pi_! \mu hom(F,G) \simeq \cRHom(F,\bfk_X) \otimes G$.
		\item For a closed submanifold $M$ of $X$, $\mu hom(\bfk_{M},F)	\simeq i_* \mu_M(F)$, where $i \colon T^*_M X \to T^*X$ is the embedding.
	\end{enumerate}
\end{proposition}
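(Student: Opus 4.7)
My proposed proof handles each of the three identities separately; in every case the plan is to unfold the definition $\mu hom(F,G) := \mu_{\Delta_X}\cRHom(q_2^{-1}F, q_1^!G)$ and invoke the basic compatibility of microlocalization along a closed submanifold with the six operations, together with standard adjunctions.

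For (i), the input is the identity $R\pi_{M*}\mu_M(H)\simeq\iota_M^{-1}\RG_M(H)$ for a closed submanifold $M$ of a manifold $Y$ with embedding $\iota_M$ and projection $\pi_M\colon T^*_MY\to M$. Applied with $Y=X\times X$, $M=\Delta_X$, $\iota_M=\delta$, $\pi_M=\pi$ and $H=\cRHom(q_2^{-1}F, q_1^!G)$, the task reduces to computing $\delta^{-1}\RG_{\Delta_X}\cRHom(q_2^{-1}F, q_1^!G)$. Using $\RG_Z\cRHom(A,B)\simeq\cRHom(\bfk_Z\otimes A, B)$ and the projection formula $\bfk_{\Delta_X}\otimes q_2^{-1}F\simeq\delta_*F$, this becomes $\delta^{-1}\cRHom(\delta_*F, q_1^!G)\simeq\delta^{-1}\delta_*\cRHom(F,\delta^!q_1^!G)\simeq\cRHom(F,G)$, where one uses the $\delta_*$-adjunction and $q_1\circ\delta=\id_X$.

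For (ii), I would run the parallel argument using the dual identity $R\pi_{M!}\mu_M(H)\simeq\iota_M^!H$, up to the appropriate orientation/shift factor coming from the Fourier-Sato picture underlying $\mu_M$ (cf.\ \cite[\S4.3]{KS90}). The same tensor-hom/adjunction bookkeeping then leaves one with an object that one wants to identify with $\cRHom(F,\bfk_X)\otimes G$ rather than $\cRHom(F,G)$, and this is exactly where the cohomological constructibility of $F$ enters: under that hypothesis the natural morphism $\cRHom(F,\bfk_X)\otimes G\to\cRHom(F,G)$ (or the dual variant appearing in the computation) is an isomorphism, which closes the argument.

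For (iii), unfolding yields $\mu hom(\bfk_M, F) = \mu_{\Delta_X}\RG_{X\times M}(q_1^!F)$, supported inside $T^*_{\Delta_X}(X\times X)|_{M}$. I would apply the direct-image functoriality of microlocalization (\pref{prp:mufunctorial}(i)) to the closed embedding $i_{X\times M}\colon X\times M\hookrightarrow X\times X$, whose preimage of $\Delta_X$ is the diagonal of $M$, identified with $M$. Since $q_1|_{X\times M}\colon X\times M\to X$ is a submersion sending this diagonal to $M$, a subsequent application of \pref{prp:mufunctorial}(ii) identifies the outcome with $\mu_M(F)$, pushed forward along $i\colon T^*_MX\hookrightarrow T^*X$. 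The main obstacle across the three parts is the bookkeeping in (iii): making sure that the identification $T^*_{\Delta_X}(X\times X)\simeq T^*X$ via the first projection is consistent with the embedding $i\colon T^*_MX\hookrightarrow T^*X$ produced by the two successive applications of microlocalization functoriality, and analogously in (ii) tracking the dualizing twist so that constructibility absorbs exactly the right factor.
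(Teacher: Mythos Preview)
The paper does not give its own proof of this proposition; it is stated as a direct citation of \cite[Propositions~4.4.2 and 4.4.3]{KS90}, so there is no proof here to compare against. Your outline for (i) is correct and is essentially the argument in \cite{KS90}; your plan for (iii) via the two functoriality statements for $\mu_M$ is also the right idea, modulo the bookkeeping you already flag.

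There is, however, a genuine gap in your treatment of (ii). You assert that cohomological constructibility of $F$ makes the natural morphism
\[
\cRHom(F,\bfk_X)\otimes G \longrightarrow \cRHom(F,G)
\]
an isomorphism, and that this is what ``closes the argument''. This is false. Take $X=\bR$, $F=\bfk_{\{0\}}$ (which is cohomologically constructible), and $G=\bfk_{[0,\infty)}$. Then $\cRHom(F,\bfk_X)\otimes G$ has stalk $\bfk[-1]$ at $0$, whereas $\cRHom(F,G)=\RG_{\{0\}}(\bfk_{[0,\infty)})$ has vanishing stalk at $0$. More conceptually, the morphism above is an isomorphism when $F$ is \emph{perfect} (dualizable), not merely cohomologically constructible; constructibility does not force $\mathrm{D}'F\otimes(-)$ to coincide with $\cRHom(F,-)$.

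The place where cohomological constructibility actually enters in \cite{KS90} is different: one first identifies $R\pi_!\mu_{\Delta}(H)$ with $\delta^{-1}H$ (up to the Fourier--Sato/orientation twist you mention), and then uses constructibility of $F$ to split the \emph{external} Hom as an external tensor product, namely to get an isomorphism of the type
\[
q_2^{-1}\cRHom(F,\bfk_X)\otimes q_1^!G \simto \cRHom(q_2^{-1}F,q_1^!G)
\]
on $X\times X$ (cf.\ \cite[Prop.~3.4.4]{KS90}). Applying $\delta^{-1}$ to the left-hand side then yields $\cRHom(F,\bfk_X)\otimes G$ directly, with no need for the (false) internal tensor--Hom collapse. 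You should rewrite (ii) along these lines.
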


\begin{proposition}[{\cite[Cor.~5.4.10 and Cor.~6.4.3]{KS90}}]\label{prp:muhomss}
	Let  $F,G \in \Db(X)$.
	Then
	\begin{equation}
	\begin{split}
	\Supp(\mu hom(F,G)) & \subset \MS(F) \cap \MS(G), \\
	\MS(\mu hom(F,G)) & \subset -\bfh^{-1}(C(\MS(G), \MS(F))),
	\end{split}
	\end{equation}
	where $C(S_1,S_2)$ is the normal cone and $\bfh \colon T^*T^*X \simto T T^*X$ is the Hamiltonian isomorphism (see \pref{subsec:geometric}).
\end{proposition}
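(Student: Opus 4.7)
The plan is to deduce both inclusions directly from the definition
\[
\mu hom(F, G) = \mu_{\Delta_X} \cRHom(q_2^{-1}F, q_1^! G),
\]
by combining \pref{prp:SSprod} with two standard general properties of the microlocalization functor $\mu_M$ along a closed submanifold $M \subset Y$, valid for every $H \in \Db(Y)$: first, $\Supp(\mu_M H) \subset T^*_M Y \cap \MS(H)$; second, $\MS(\mu_M H)$ is contained in the image under $-\bfh^{-1}$ of the normal cone of $\MS(H)$ along $T^*_M Y$. Both are consequences of the definition of $\mu_M$ as the Fourier--Sato transform of the specialization $\nu_M$, together with the standard normal-cone bound for the microsupport of a specialization.

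As a preliminary step I would compute the microsupport of $H := \cRHom(q_2^{-1}F, q_1^!G)$ on $X \times X$. Since $q_1$ is a submersion, $q_1^!G \simeq q_1^{-1}G \otimes \omega_{q_1}$ with $\omega_{q_1}$ invertible and locally constant along the fibers of $q_1$, so $\MS(q_1^!G) = \MS(q_1^{-1}G)$. Applying \pref{prp:SSprod} then yields
\[
\MS(H) \subset \MS(G) \times \MS(F)^a \subset T^*X \times T^*X.
\]

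For the support inclusion, I would apply the first property above with $Y = X \times X$ and $M = \Delta_X$. Identifying $T^*_{\Delta_X}(X \times X)$ with $T^*X$ via $(x, x; \xi, -\xi) \mapsto (x; \xi)$, the intersection of $\MS(G) \times \MS(F)^a$ with the diagonal conormal consists exactly of points with $(x; \xi) \in \MS(F) \cap \MS(G)$.

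For the microsupport inclusion, I would apply the second property with the same choices. It remains to perform a local-coordinate computation identifying the resulting normal cone along $T^*_{\Delta_X}(X \times X)$, under the natural identification $T T^*_{\Delta_X}(X \times X) \simeq T T^*X$, with $C(\MS(G), \MS(F)) \subset T T^*X$, and verifying that the ambient Hamiltonian isomorphism on $T^*(X \times X)$ restricts to $\bfh$ on $T^*X$. I expect the main obstacle to be the sign and ordering bookkeeping in this last step: one must track how the antipodal flip on the second factor (the $\MS(F)^a$ above) is absorbed by the asymmetric ordering in $C(\cdot, \cdot)$ so that $\MS(G)$ ends up in the first slot and $\MS(F)$ in the second, and how the overall sign $-\bfh^{-1}$ emerges correctly from these identifications.
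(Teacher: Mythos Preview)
The paper does not supply its own proof of this proposition; it simply records the two inclusions with a citation to \cite[Corollary~5.4.10 and Corollary~6.4.3]{KS90}. So there is nothing in the paper to compare your proposal against directly.

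That said, your outline is the standard route and matches how these facts are derived in \cite{KS90}: bound $\MS\bigl(\cRHom(q_2^{-1}F,q_1^!G)\bigr)$ by $\MS(G)\times\MS(F)^a$ via \pref{prp:SSprod} (after noting $q_1^!G\simeq q_1^{-1}G\otimes\omega_{q_1}$), then feed this into the general support and microsupport estimates for $\mu_{\Delta_X}$. The two ``standard properties'' you invoke are precisely \cite[Proposition~4.3.2 and Theorem~6.4.1]{KS90}. Your own caveat about the final identification is accurate: passing from $C_{T^*_{\Delta_X}(X\times X)}\bigl(\MS(G)\times\MS(F)^a\bigr)$ to $C(\MS(G),\MS(F))\subset TT^*X$ under the first-projection identification $T^*_{\Delta_X}(X\times X)\simeq T^*X$ is exactly the computation carried out in \cite{KS90} to obtain Corollary~6.4.3 from Theorem~6.4.1, and the antipodal flip on the second factor is what swaps the roles so that $\MS(G)$ sits in the first slot of the pair cone. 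There is no gap in your plan; the only work left is the coordinate bookkeeping you already flag.
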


\begin{proposition}\label{prp:microlocalcoh}
	Let $\varphi \colon X \to \bR$ be a $C^\infty$-function and assume that $d\varphi(x) \neq 0$ for any $x \in \varphi^{-1}(0)$.
	Set $M \coloneqq  \varphi^{-1}(0)$ and define an open subset $T^{*+}_MX$ of $T^*_{M}X$ by
	\begin{equation}
	T^{*+}_MX
	\coloneqq 
	\{ (x;\lambda d\varphi(x)) \mid x \in M, \lambda >0 \}.
	\end{equation}
	Moreover, denote by $\pi_{M+} \colon T^{*+}_MX \to M$ the projection.
	Let $F \in \Db(X)$.
	Then
	\begin{equation}\label{eq:muhalf}
	\RG_{\{\varphi \ge 0\}}(F)|_{M}
	\simeq
	R {\pi_{M+}}_* \mu hom(\bfk_{\{ \varphi \ge 0 \}},F)|_{T^{*+}_MX}
	\simeq 
	R {\pi_{M+}}_* \mu_{M}(F)|_{T^{*+}_MX}.
	\end{equation}
	In particular, 	
	\begin{equation}
	\RG(M;\RG_{\{\varphi \ge 0\}}(F)|_{M})
	\simeq
	\RG\left( T^{*+}_MX;\mu_{M}(F)|_{T^{*+}_MX} \right).
	\end{equation}
\end{proposition}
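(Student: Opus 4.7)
My plan is to prove the two isomorphisms of \eqref{eq:muhalf} in order; the global-section statement then follows by applying $\RG(M;-)$ to both sides. The main tools are \pref{prp:muhomprop} (which relates $\mu hom$ both to $\cRHom$ and to $\mu_M$) and \pref{prp:muhomss} (support estimates).

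The second isomorphism follows from a support argument. I apply the contravariant functor $\mu hom(-,F)$ to the distinguished triangle $\bfk_{\{\varphi>0\}}\to\bfk_{\{\varphi\ge 0\}}\to\bfk_M\overset{+1}{\to}$ and obtain a triangle whose third term is $\mu hom(\bfk_{\{\varphi>0\}},F)$. By \pref{prp:muhomss}, this third term is supported in $\MS(\bfk_{\{\varphi>0\}})$, which over $M$ consists only of the zero-section and the closed non-positive ray $\{\lambda\le 0\}\cdot d\varphi$; in particular it is disjoint from $T^{*+}_MX$. Combined with \pref{prp:muhomprop}~(iii), which gives $\mu hom(\bfk_M,F)\simeq i_*\mu_M(F)$, restricting the triangle to $T^{*+}_MX$ yields $\mu hom(\bfk_{\{\varphi\ge 0\}},F)|_{T^{*+}_MX}\simeq\mu_M(F)|_{T^{*+}_MX}$.

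For the first isomorphism, I would exploit the conic nature of $\mu hom(\bfk_{\{\varphi\ge 0\}},F)$ under the $\bR_{>0}$-dilation of $T^*X$. Since $\pi_{M+}\colon T^{*+}_MX\to M$ is a trivial $\bR_{>0}$-bundle whose fibres coincide with the $\bR_{>0}$-orbits, conicness forces the restricted sheaf to be locally constant along each fibre, and contractibility of $\bR_{>0}$ makes $R{\pi_{M+}}_*$ collapse to pullback along the canonical section $s(x)=(x;d\varphi(x))$. The stalk formula \eqref{eq:microlocalizationstalk} applied with the function $\varphi$ identifies $s^{-1}\mu_M(F)_x\simeq\RG_{\{\varphi\ge 0\}}(F)_x$ pointwise. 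The canonical sheaf-level isomorphism is then supplied by \pref{prp:muhomprop}~(i), $R\pi_*\mu hom(\bfk_{\{\varphi\ge 0\}},F)\simeq\RG_{\{\varphi\ge 0\}}(F)$, restricted to $M$ via smooth base change for the submersion $\pi\colon T^*X\to X$.

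The main obstacle is the compatibility step. One must check that the canonical morphisms of \pref{prp:muhomprop}~(i) and (iii), the smooth base-change isomorphism along $M\hookrightarrow X$, and the conic collapse along $\pi_{M+}$ all fit into a single commutative diagram yielding a genuine sheaf-level isomorphism on $M$, rather than only a pointwise identification. Concretely, since the support of $\mu hom(\bfk_{\{\varphi\ge 0\}},F)|_{T^*X|_M}$ contains both the zero-section $T^*_XX|_M$ and the closed positive ray $\overline{T^{*+}_MX}$, one has to verify that the zero-section contribution to $R(\pi|_{T^*X|_M})_*$ is harmless; this can be organized via the Step~1 triangle together with \pref{prp:muhomprop}~(i) applied simultaneously to $\bfk_{\{\varphi\ge 0\}}$ and $\bfk_M$, so that the extraneous pieces cancel and only the $R\pi_{M+*}$-contribution over $T^{*+}_MX$ survives.
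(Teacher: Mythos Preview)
Your argument for the second isomorphism in \eqref{eq:muhalf} is correct and essentially identical to the paper's: apply $\mu hom(-,F)$ to the triangle $\bfk_{\{\varphi>0\}}\to\bfk_{\{\varphi\ge 0\}}\to\bfk_M$, kill the first term over $T^{*+}_MX$ by the support estimate, and invoke \pref{prp:muhomprop}~(iii).

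For the first isomorphism, however, there is a genuine gap. Your ``smooth base change for the submersion $\pi$'' does not apply: base change for $R\pi_*$ is along the closed immersion $M\hookrightarrow X$, which is not smooth, and $\pi$ is not proper, so neither standard base-change theorem gives $(R\pi_*G)|_M\simeq R(\pi|_{T^*X|_M})_*(G|_{T^*X|_M})$ for general conic $G$. You recognise this yourself in the ``main obstacle'' paragraph, but the proposed fix (simultaneous application of \pref{prp:muhomprop}~(i) to $\bfk_{\{\varphi\ge 0\}}$ and $\bfk_M$ plus cancellation) is too vague to constitute a proof.

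The missing idea, which the paper uses, is \pref{prp:muhomprop}~(ii). Because $\bfk_{\{\varphi\ge 0\}}$ is cohomologically constructible, one has $R\pi_!\,\mu hom(\bfk_{\{\varphi\ge 0\}},F)\simeq\RG_{\{\varphi\ge 0\}}(\bfk_X)\otimes F$, and this vanishes on $M$. Feeding this and \pref{prp:muhomprop}~(i) into the Sato triangle
\[
R\pi_!\,\mu hom(\bfk_{\{\varphi\ge 0\}},F)\to R\pi_*\,\mu hom(\bfk_{\{\varphi\ge 0\}},F)\to R\mathring{\pi}_*\bigl(\mu hom(\bfk_{\{\varphi\ge 0\}},F)|_{\rT X}\bigr)\overset{+1}{\to}
\]
and restricting to $M$ gives the first isomorphism directly, since the third term is already supported in $T^{*+}_MX$ by \pref{prp:muhomss}. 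This replaces your unjustified base-change step and your ad hoc handling of the zero-section contribution in one stroke.
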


\begin{proof}
	Consider the distinguished triangle
	\begin{equation}\label{eq:muhomtri}
	R\pi_! \mu hom(\bfk_{\{ \varphi \ge 0 \}},F)
	\to 
	R \pi_* \mu hom(\bfk_{\{ \varphi \ge 0 \}},F)
	\to 
	R \mathring{\pi}_* \mu hom(\bfk_{\{ \varphi \ge 0 \}},F)|_{\rT X}
	\overset{+1}{\to}.
	\end{equation}
	By \pref{prp:muhomss}, $\Supp(\mu hom(\bfk_{\{ \varphi \ge 0 \}},F)|_{\rT X}) \subset T^{*+}_MX$.
	Hence we have
	\begin{equation}
	R \mathring{\pi}_* \mu hom(\bfk_{\{ \varphi \ge 0 \}},F)|_{\rT X} 
	\simeq 
	\left( R {\pi_{M+}}_* \mu hom(\bfk_{\{ \varphi \ge 0 \}},F)|_{T^{*+}_MX} \right)_M.
	\end{equation} 
	On the other hand, since $\bfk_{\{ \varphi \ge 0 \}}$ is cohomologically constructible, by \pref{prp:muhomprop}(i) and (ii), we get 
	\begin{equation}
	\begin{split}
	& R\pi_! \mu hom(\bfk_{\{ \varphi \ge 0 \}},F)
	\simeq \cRHom(\bfk_{\{ \varphi \ge 0 \}},\bfk_X) \otimes F
	\simeq \RG_{\{\varphi \ge 0 \}}(\bfk_X) \otimes F, \\
	& R\pi_* \mu hom(\bfk_{\{ \varphi \ge 0 \}},F)
	\simeq \cRHom(\bfk_{\{ \varphi \ge 0 \}},F)
	\simeq \RG_{\{\varphi \ge 0 \}}(F).
	\end{split}
	\end{equation}
	Since $\RG_{\{\varphi \ge 0 \}}(\bfk_X)|_M \simeq 0$, restricting the distinguished triangle \eqref{eq:muhomtri} to $M$, we obtain the first isomorphism in \eqref{eq:muhalf}.
	Moreover since $\MS(\bfk_{\{ \varphi>0 \}}) \cap {T^{*+}_MX}=\emptyset$, by \pref{prp:muhomss}, we have 
	\begin{equation}
	\mu hom(\bfk_{\{ \varphi \ge 0 \}},F)|_{T^{*+}_MX}
	\simto 
	\mu hom(\bfk_{\{ \varphi=0 \}},F)|_{T^{*+}_MX}.
	\end{equation}
	Thus the second isomorphism in \eqref{eq:muhalf} follows from \pref{prp:muhomprop}(iii).	
\end{proof}

\subsection{Simple sheaves and quantized contact transformations (\cite[\S7.5]{KS90})}\label{subsec:simple}

Let $\Lambda \subset \rT X$ be a locally closed conic Lagrangian submanifold and $p \in \Lambda$.
Simple sheaves along $\Lambda$ at $p$ are defined in \cite[Def.~7.5.4]{KS90}.
In this subsection we recall them.

Let $\varphi \colon X \to \bR$ be a $C^\infty$-function such that $\varphi(\pi(p))=0$ and $\Gamma_{d\varphi}$ intersects $\Lambda$ transversally at $p$.
For $p \in \Gamma_{d\varphi} \cap \Lambda$, we define the following Lagrangian subspaces in $T_pT^*X$:
\begin{equation}\label{eq:notaionlambda}
\lambda_\infty(p)\coloneqq T_p (T^*_{\pi(p)}X), \quad
\lambda_{\Lambda}(p)\coloneqq T_p \Lambda, \quad
\lambda_{\varphi}(p)\coloneqq T_p \Gamma_{d\varphi}.
\end{equation}
Here, our notation $\lambda_\infty(p)$ is different from that of \cite{KS90}, where the authors write $\lambda_0(p)$ for $T_p (T^*_{\pi(p)}X)$.
In this paper we do \emph{not} use the symbol $\lambda_0(p)$.
We briefly recall the definition of the inertia index of a triple of Lagrangian subspaces (see \cite[\S A.3]{KS90}).
Let $(E,\sigma)$ be a symplectic vector space and $\lambda_1,\lambda_2,\lambda_3$ be three Lagrangian subspaces of $E$.
We define a quadratic form $q$ on $\lambda_1 \oplus \lambda_2 \oplus \lambda_3$ by $q(v_1,v_2,v_3)=\sigma(v_1,v_2)+\sigma(v_2,v_3)+\sigma(v_3,v_1)$.
Then the \emph{inertia index} $\tau_{E}(\lambda_\infty,\lambda_1,\lambda_3)$ of the triple is defined as the signature of $q$.
Using the inertia index and the notation~\eqref{eq:notaionlambda}, one sets
\begin{equation}
\tau_{\varphi}=\tau_{p,\varphi}\coloneqq 
\tau_{\,T_p T^*X}(\lambda_\infty(p),\lambda_\Lambda(p),\lambda_\varphi(p)).
\end{equation}

\begin{proposition}[{\cite[Prop.~7.5.3]{KS90}}]\label{prp:maslovshft}
	For $i=1,2$, let $\varphi_i \colon X \to \bR$ be a $C^\infty$-function such that $\varphi_i(\pi(p))=0$ and $\Gamma_{d\varphi_i}$ intersects $\Lambda$ transversally at $p$.
	Let $F \in \Db(X)$ and assume that $\MS(F) \subset \Lambda$ in a neighborhood of $p$.
	Then
	\begin{equation}
	\RG_{\{\varphi_1 \ge 0\}}(F)_{\pi(p)}
	\simeq
	\RG_{\{\varphi_2 \ge 0\}}(F)_{\pi(p)}\left[\tfrac{1}{2}(\tau_{\varphi_2}-\tau_{\varphi_1})\right].
	\end{equation}
\end{proposition}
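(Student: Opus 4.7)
The plan is to reduce the problem to a standard linear model via a quantized contact transformation (QCT), then perform an explicit Morse-theoretic stalk computation and compare the resulting shift with the inertia index by a linear algebra argument.

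First, since the statement is purely local at $p$, I would choose a contact transformation in a neighborhood of $p$ that sends $\Lambda$ to the conormal bundle $T^*_N \tilde X$ of a submanifold $N$ in some manifold $\tilde X$, and lift it to a quantized contact transformation inducing an equivalence $\Db(X;p) \simeq \Db(\tilde X;\tilde p)$. The image $\tilde F$ of $F$ has microsupport contained in $T^*_N\tilde X$ near $\tilde p$, and a structure theorem for such sheaves (\cite[Proposition 7.5.5]{KS90}, effectively the definition of a simple sheaf along a smooth conic Lagrangian) implies $\tilde F$ is locally isomorphic, up to a shift, to $\bfk_N$, possibly twisted by a rank-one local system which will not affect the argument. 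Under this transport, the functions $\varphi_i$ become functions $\tilde\varphi_i$ on $\tilde X$ whose restrictions $\tilde\varphi_i|_N$ have non-degenerate critical points at $\pi(\tilde p)$ with Morse indices $m_i$, by transversality of $\Gamma_{d\tilde\varphi_i}$ and $T^*_N\tilde X$ at $\tilde p$.

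Second, a direct stalk computation using \pref{prp:microlocalmorse} applied to $\bfk_N$ and the function $\tilde\varphi_i|_N$ (via the Morse lemma to bring $\tilde\varphi_i|_N$ to a normal form and an explicit local cohomology calculation for a half-space in a real vector space) yields
\[
\RG_{\{\tilde\varphi_i \ge 0\}}(\bfk_N)_{\pi(\tilde p)} \simeq \bfk[-m_i].
\]
Transporting back through the QCT (which shifts both sides by the same amount) shows that the two local cohomologies $\RG_{\{\varphi_i \ge 0\}}(F)_{\pi(p)}$ differ exactly by a shift of $m_2 - m_1$.

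Third, I would identify $m_2 - m_1$ with $\tfrac12(\tau_{\varphi_2}-\tau_{\varphi_1})$. This is a computation on the symplectic vector space $T_pT^*X$: writing the triple $(\lambda_\infty(p),\lambda_\Lambda(p),\lambda_{\varphi_i}(p))$ in a basis adapted to the splitting of $T_pT^*X$ induced by $\lambda_\infty$ and $\lambda_\Lambda$ (which is possible by transversality), the quadratic form $q$ defining the inertia index reduces up to a Lagrangian-dependent offset to the Hessian of $\tilde\varphi_i|_N$. One obtains $m_i = \tfrac12 \tau_{T_pT^*X}(\lambda_\infty,\lambda_\Lambda,\lambda_{\varphi_i}) + c$ with a constant $c$ depending only on $(\lambda_\infty,\lambda_\Lambda)$, which then cancels when taking the difference in $i$. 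The main obstacle is this last linear algebra step: one must identify the relevant block of the quadratic form $q$ with the Hessian of $\tilde\varphi_i|_N$ and verify that the inertia index is invariant under the linear symplectic action induced by the QCT, so that the cancellation happens cleanly. Everything else amounts to routine application of the ingredients reviewed above.
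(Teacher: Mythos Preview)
The paper does not supply its own proof of this proposition; it is quoted from \cite[Proposition~7.5.3]{KS90} and used as a black box, so there is nothing to compare against here. Your outline is essentially the strategy of the original argument in \cite{KS90}: reduce by a quantized contact transformation so that $\Lambda$ becomes a conormal bundle, compute the local-cohomology stalk explicitly via a Morse-type calculation, and identify the resulting shift difference with the difference of inertia indices by linear algebra on $T_pT^*X$.

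One caution: invoking \cite[Proposition~7.5.5]{KS90} for the local structure of $\tilde F$ risks circularity, since the precise shift recorded there is established using Proposition~7.5.3 itself. What you actually need is the weaker and logically prior fact that $\MS(\tilde F)\subset T^*_N\tilde X$ near $\tilde p$ forces $\tilde F\simeq L_N$ in $\Db(\tilde X;\tilde p)$ for some $L\in\Db(\Module(\bfk))$; this follows directly from the microsupport estimate and the equivalence $\Db(\tilde X;\tilde p)\simeq\Db(\Module(\bfk))$ for a generic point of a smooth conic Lagrangian, without appealing to any index formula. With that adjustment your plan is sound, and you have correctly identified the genuine content as the final linear-algebra step matching the Morse index of $\tilde\varphi_i|_N$ with $\tfrac12\tau_{\varphi_i}$ up to a common additive constant.
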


\begin{definition}[{\cite[Def.~7.5.4]{KS90}}]
	In the situation of \pref{prp:maslovshft}, $F$ is said to have microlocal type $L \in \Db(\Module(\bfk))$ with shift $d \in \tfrac{1}{2} \bZ$ at $p$ if
	\begin{equation}
	\RG_{\{\varphi \ge 0\}}(F)_{\pi(p)} 
	\simeq
	L\left[d-\tfrac{1}{2}\dim X-\tfrac{1}{2}\tau_\varphi\right]
	\end{equation}
	for some (hence for any) $C^\infty$-function $\varphi$ such that $\varphi(\pi(p))=0$ and $\Gamma_{d\varphi}$ intersects $\Lambda$ transversally at $p$.
	Moreover, if $L \simeq \bfk$, $F$ is said to be \emph{simple} along $\Lambda$ at $p$.
	If $F$ is simple at all points of $\Lambda$, one says that $F$ is simple along $\Lambda$.
\end{definition}

One can prove that if $F \in \Db(X)$ is simple along $\Lambda$, then $\mu hom(F,F)|_{\Lambda} \simeq \bfk_{\Lambda}$.
When $\Lambda$ is a conormal bundle to a closed submanifold $M$ of $X$ in a neighborhood of $p$, that is, $\pi|_{\Lambda} \colon \Lambda \to X$ has constant rank, then $F \in \Db(X)$ is simple along $\Lambda$ at $p$ if $F \simeq \bfk_M[d]$ in $\Db(X;p)$ for some $d \in \bZ$.

\begin{example}\label{eg:simpleshift}
	Let $X=\bR^{n+1}$ and consider the hyperplane $M=\bR^n \times \{0\}$.
	Then $\bfk_M$ is simple with shift $1/2$ along $T^*_MX$. 
\end{example}

We also recall the notion of quantized contact transformations.
Let $\chi \colon T^*X \supset \Omega_1 \simto \Omega_2 \subset T^*X$ be a contact transformation.
A \emph{quantized contact transformation} associated with $\chi$ is a kernel $K \in \Db(X \times X)$ which is simple along $(\id_X \times a)^{-1}\mathrm{Graph}(\chi)$ in $\Omega_2 \times \Omega_1^a$ and satisfies some properties (see \cite[\S7.2]{KS90} for details).
A quantized contact transformation $K$ induces an equivalence of categories
\begin{equation}
K \circ (\ast)
\colon
\Db(X;\Omega_1) \simto \Db(X;\Omega_2).
\end{equation}

\begin{proposition}[{\cite[Thm.~7.2.1]{KS90}}]\label{prp:qctmuhom}
	Let $K \in \Db(X \times X)$ be a quantized contact transformation associated with a contact transformation $\chi \colon T^*X \supset \Omega_1 \simto \Omega_2 \subset T^*X$.
	Moreover, let $F, G \in \Db(X;\Omega_1)$.
	Then 
	\begin{equation}
	\mu hom(K \circ F, K \circ G)|_{\Omega_2}
	\simeq
	\chi_* (\mu hom(F,G) |_{\Omega_1}).
	\end{equation}
\end{proposition}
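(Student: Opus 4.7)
The plan is to view $\mu hom$ as a functor compatible with kernel actions and to reduce the statement to a computation of $\mu hom(K,K)$, which will be controlled via simplicity of $K$.

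\medskip
\noindent\textbf{Step 1 (composition formula for $\mu hom$).} I would first establish a natural morphism
\begin{equation*}
\mu hom(K, K) \underset{T^*X}{\circ} \mu hom(F, G) \longrightarrow \mu hom(K \circ F, K \circ G)
\end{equation*}
in $\Db(T^*X)$, where the left-hand composition is understood via the kernel formalism of \pref{subsec:kernels} after identifying $T^*_{\Delta_X}(X \times X)$ with $T^*X$. The construction starts from the definition $\mu hom(\cdot,\cdot) = \mu_{\Delta_X}\cRHom(q_2^{-1}\cdot, q_1^!\cdot)$ and the expression of $K\circ F = Rq_{1!}(K\otimes q_2^{-1}F)$ as a kernel composition; it then combines functoriality of $\mu_{\Delta_X}$ for proper direct images and non-characteristic inverse images (\pref{prp:mufunctorial}) with the projection formula and the standard morphism $\cRHom(A,A') \otimes \cRHom(B,B') \to \cRHom(A\otimes B, A'\otimes B')$.

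\medskip
\noindent\textbf{Step 2 (simplicity of $K$).} Since $K$ is simple along the Lagrangian $\Lambda_K := (\id_X \times a)^{-1}\mathrm{Graph}(\chi)$ in $\Omega_2 \times \Omega_1^a$, the remark following the definition of simplicity in \pref{subsec:simple} yields $\mu hom(K, K)|_{\Lambda_K} \simeq \bfk_{\Lambda_K}$. Substituting this into the morphism of Step~1 and unwinding the kernel composition over $T^*X$, its restriction to $\Omega_2$ becomes
\begin{equation*}
\bfk_{\Lambda_K} \underset{T^*X}{\circ} \bigl( \mu hom(F,G)|_{\Omega_1} \bigr) \; \simeq \; \chi_*\bigl( \mu hom(F,G)|_{\Omega_1} \bigr),
\end{equation*}
because $\Lambda_K$ is the graph of $\chi$ (after antipodal identification in the second factor) and convolution with the graph of a diffeomorphism is pushforward along it. This produces the desired natural morphism $\chi_*(\mu hom(F,G)|_{\Omega_1}) \to \mu hom(K\circ F, K\circ G)|_{\Omega_2}$.

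\medskip
\noindent\textbf{Step 3 (inversion).} To promote this morphism to an isomorphism I would apply the same construction with a quantized contact transformation $K'$ associated with $\chi^{-1}$, satisfying $K'\circ K \simeq \bfk_{\Delta_X}$ and $K\circ K' \simeq \bfk_{\Delta_X}$ in the appropriate localized categories (such a $K'$ exists because $K$ induces an equivalence $\Db(X;\Omega_1) \simto \Db(X;\Omega_2)$). Applying Step~2 with $K$ replaced by $K'$ and combining the two morphisms, the resulting endomorphisms of $\mu hom(F,G)|_{\Omega_1}$ and of $\mu hom(K\circ F,K\circ G)|_{\Omega_2}$ coincide with those induced by $K'\circ K \simeq \bfk_{\Delta_X}$ and $K\circ K' \simeq \bfk_{\Delta_X}$, hence are identities.

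\medskip
The main obstacle is Step~1: producing the composition morphism cleanly requires checking that all the properness and non-characteristic hypotheses of \pref{prp:mufunctorial}, \pref{prp:muhomss}, and \pref{prp:SScomp} are satisfied for the kernel compositions at hand, which relies crucially on the fact that $\MS(K)\cap (\Omega_2\times\Omega_1^a)$ is contained in the Lagrangian $\Lambda_K$ projecting diffeomorphically onto $\Omega_2$ and $\Omega_1^a$ under the two projections. Once these microlocal transversality conditions are set up, Steps~2 and~3 are essentially bookkeeping with tools already recalled in \pref{sec:preliminaries}.
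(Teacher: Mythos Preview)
The paper does not give its own proof of this proposition; it is stated as a direct citation of \cite[Theorem~7.2.1]{KS90} and used as a black box. Your outline is correct and is essentially the argument of the cited reference: the composition morphism of Step~1 is the product on $\mu hom$ constructed in \cite[\S4.4]{KS90}, and Steps~2--3 (simplicity of $K$ giving $\mu hom(K,K)|_{\Lambda_K}\simeq\bfk_{\Lambda_K}$, then invertibility of $K$) are exactly how Theorem~7.2.1 is deduced there.
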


The behavior of the shift of a simple sheaf under a quantized contact transformation is described by the inertia index.

\begin{proposition}[{\cite[Prop.~7.5.6 and Thm.~7.5.11]{KS90}}]\label{prp:qctshift}
	Let $F \in \Db(X)$ and assume that $F$ is simple with shift $d$ along $\Lambda$ at $p$.
	Let $\chi \colon T^*X \supset \Omega_1 \simto \Omega_2 \subset T^*X$ be a contact transformation defined in a neighborhood of $p$ and $K \in \Db(X \times X)$ be a quantized contact transformation associated with $\chi$.
	Assume that $K$ is simple with shift $d'$ along $(\id_X \times a)^{-1}\mathrm{Graph}(\chi)$ at $(\chi(p),p^a)$.
	Then $K \circ F$ is simple with shift $d+d'-\delta$ along $\chi(\Lambda)$ at $\chi(p)$, where
	\begin{equation}
	\delta
	\coloneqq 
	\frac{1}{2}\dim X
	+ \frac{1}{2}
	\tau(\lambda_\infty(p),\lambda_{\Lambda}(p),\chi^{-1}(\lambda_\infty(\chi(p)))).
	\end{equation}
\end{proposition}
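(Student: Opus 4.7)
The plan is to verify two things: that $K \circ F$ is simple of microlocal type $\bfk$ along $\chi(\Lambda)$ near $\chi(p)$, and that the shift is $d+d'-\delta$. For the first, I would combine \pref{prp:SScomp} with the hypotheses on $\MS(K) = (\id_X \times a)^{-1}\mathrm{Graph}(\chi)$ and $\MS(F) = \Lambda$ to conclude $\MS(K\circ F) \subset \chi(\Lambda)$ in a neighborhood of $\chi(p)$. The microlocal type is then read off from \pref{prp:qctmuhom} applied with $G = F$: since $F$ is simple along $\Lambda$ at $p$, one has $\mu hom(F,F)|_\Lambda \simeq \bfk_\Lambda$ near $p$, so
\begin{equation}
\mu hom(K\circ F,K\circ F)|_{\chi(\Lambda)} \simeq \chi_*\bigl(\mu hom(F,F)|_\Lambda\bigr) \simeq \bfk_{\chi(\Lambda)}
\end{equation}
near $\chi(p)$, which says $K\circ F$ is simple there.

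To compute the shift $d''$ of $K\circ F$ at $\chi(p)$, I would fix a $C^\infty$-function $\psi$ on $X$ with $\psi(\pi(\chi(p)))=0$, $d\psi(\pi(\chi(p)))=\chi(p)$, and $\Gamma_{d\psi}$ transverse to $\chi(\Lambda)$ at $\chi(p)$, together with an auxiliary $C^\infty$-function $\varphi$ on $X$ playing the same role at $p$ for $\Lambda$. By definition, $d''$ is characterized by
\begin{equation}
\RG_{\{\psi \ge 0\}}(K\circ F)_{\pi(\chi(p))} \simeq \bfk\bigl[d'' - \tfrac{1}{2}\dim X - \tfrac{1}{2}\tau_\psi\bigr],
\end{equation}
where $\tau_\psi = \tau(\lambda_\infty(\chi(p)),T_{\chi(p)}\chi(\Lambda),T_{\chi(p)}\Gamma_{d\psi})$. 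Using \eqref{eq:microlocalizationstalk} and \pref{prp:muhomprop}~(iii), this stalk equals $\mu_{\psi^{-1}(0)}(K\circ F)_{\chi(p)}$, and through the compatibility of $\mu hom$ with $\chi$ given by \pref{prp:qctmuhom} one compares it with an analogous local cohomology of $F$ at $\pi(p)$ against a half-space whose conormal Lagrangian matches $\chi^{-1}(\Gamma_{d\psi})$ near $p$. This transfer brings in the shifts $d$ (from $F$) and $d'$ (from $K$), leaving a Maslov-type discrepancy to be identified with $\delta$.

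The main obstacle is the symplectic-linear bookkeeping that identifies this discrepancy with $\delta$. After transporting every Lagrangian subspace to $T_pT^*X$ via $\chi^{-1}$, one has four natural Lagrangians at hand: $\lambda_\infty(p)$, $\lambda_\Lambda(p)$, $\lambda_\varphi(p)$, and $\chi^{-1}(\lambda_\infty(\chi(p)))$. The Wall--Kashiwara cocycle identity for the inertia index $\tau$ among these collapses the various $\tau$-contributions -- arising from the three invocations of the definition of simplicity, for $F$ at $p$, for $K$ at $(\chi(p),p^a)$, and for $K\circ F$ at $\chi(p)$ -- into exactly $\tau(\lambda_\infty(p),\lambda_\Lambda(p),\chi^{-1}(\lambda_\infty(\chi(p))))$, while the normalization constants combine to give $\tfrac{1}{2}\dim X$. \pref{prp:maslovshft} guarantees independence of $d''$ from the choices of $\psi$ and $\varphi$, so the identification $d''=d+d'-\delta$ is well-defined.
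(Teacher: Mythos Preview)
The paper does not supply a proof of this proposition: it is quoted verbatim from \cite[Proposition~7.5.6 and Theorem~7.5.11]{KS90}, as the bracketed citation in the statement header indicates, and no argument is given in the body of the paper. So there is nothing to compare your attempt against here.

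Your sketch is broadly in the spirit of the original proof in \cite{KS90}, but a few steps remain imprecise. The deduction of simplicity via $\mu hom(K\circ F,K\circ F)\simeq\bfk_{\chi(\Lambda)}$ is fine. The shift computation, however, is where the real content lies, and your outline is vague at the crucial point: you write that ``this transfer brings in the shifts $d$ (from $F$) and $d'$ (from $K$)'' without saying how. In \cite{KS90} this is done not by directly transporting $\RG_{\{\psi\ge 0\}}(K\circ F)_{\pi(\chi(p))}$ through $\chi$, but by computing $(K\circ F)$ microlocally as a composition of simple kernels and tracking the shift via \cite[Proposition~7.5.6]{KS90}, which gives the shift of a composition in terms of the shifts of the factors and an inertia-index correction involving the three Lagrangians $\lambda_\infty(p)$, $\lambda_\Lambda(p)$, and $\chi^{-1}(\lambda_\infty(\chi(p)))$. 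Your invocation of the cocycle identity is the right tool for the final collapse, but the intermediate step---identifying which inertia indices actually appear before you apply the cocycle relation---is not spelled out. If you want to make this self-contained, that is the part to expand.
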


\section{Sheaf quantization and Tamarkin's non-displaceability \allowbreak theorem}\label{sec:quantandnondisp}

In what follows, until the end of the paper, let $M$ be a non-empty compact connected manifold without boundary.

In this section, we review Tamarkin's approach to non-displaceability problems in symplectic geometry based on microlocal sheaf theory.
We also review sheaf quantization of Hamiltonian isotopies and compact exact Lagrangian submanifolds in cotangent bundles.

\subsection{Sheaf quantization of Hamiltonian isotopies (\cite{GKS})}\label{subsec:quantHam}

Guillermou--Kashiwara--Schapira~\cite{GKS} constructed sheaf quantizations of Hamiltonian isotopies.
Since the microsupports of sheaves are conic subsets of cotangent bundles, microlocal sheaf theory is related to the exact (homogeneous) symplectic structures rather than the symplectic structures of cotangent bundles.
In order to treat non-homogeneous Hamiltonian isotopies and non-conic Lagrangian submanifolds, an important trick is to add a variable and ``conify" them, which is an idea of Tamarkin's.

Denote by $(x;\xi)$ a local homogeneous symplectic coordinate system on $T^*M$ and by $(t;\tau)$ the homogeneous symplectic coordinate system on $T^*\bR$.
We set $\Omega_+\coloneqq \{\tau >0\} \subset T^*(M \times \bR)$ and define the map
\begin{equation}
\begin{split}
\xymatrix@R=10pt{
	\rho \colon \Omega_+ \ar[r] & T^*M \\
	\ (x,t;\xi,\tau) \ar@{|->}[r] \ar@{}[u]|-{\hspace{7pt} \rotatebox{90}{$\in$}} & (x;\xi/\tau). \ar@{}[u]|-{\rotatebox{90}{$\in$}}
}
\end{split}
\end{equation}
Let $I$ be an open interval in $\bR$ containing $0$ and $\phi=(\phi_s)_{s \in I} \colon T^*M \times I \to T^*M$ be a Hamiltonian isotopy with compact support.
Note that $\phi$ is the identity for $s=0$: $\phi_0=\id_{T^*M}$.
Then one can construct a homogeneous Hamiltonian isotopy $\wh{\phi} \colon \rT(M \times \bR) \times I \to \rT(M \times \bR)$ such that the following diagram commutes:
\begin{equation}\label{diag:homog}
\begin{split}
\xymatrix{
	\Omega_+ \times I \ar[r]^-{\wh{\phi}} \ar[d]_-{\rho \times \id} & \Omega_+ \ar[d]^-{\rho} \\
	T^*M \times I \ar[r]_-{\phi} & T^*M.
}
\end{split}
\end{equation}
Here $\wh{\phi}$ is called a homogeneous Hamiltonian isotopy if it is a Hamiltonian isotopy whose Hamiltonian function $\wh{H}$ is homogeneous of degree $1$: $\wh{H}_s(x,t;c\xi,c\tau)=c \cdot \wh{H}_s(x,t;\xi,\tau)$ for any $c>0$.
See \cite[Sect.~A.3]{GKS} for more details.
For simplicity, we set $N\coloneqq M \times \bR$ and consider a homogeneous Hamiltonian isotopy $\wh{\phi}=(\wh{\phi}_s)_s \colon \rT N \times I \to \rT N$ and the associated homogeneous Hamiltonian $\wh{H} \colon \rT N \times I \to \bR$.
We define a conic Lagrangian submanifold $\Lambda_{\wh{\phi}} \subset \rT N \times \rT N \times T^*I$ by
\begin{equation}\label{eq:deflambdahatphi}
\Lambda_{\wh{\phi}}\coloneqq 
\left\{\left(\wh{\phi}_s(y;\eta), (y;-\eta), (s;-\wh{H}_s \circ \wh{\phi}_s(y;\eta)) \right) \; \middle| \; (y;\eta) \in \rT N, s \in I \right\}.
\end{equation}
Note that
\begin{equation}
\Lambda_{\wh{\phi}} \circ T^*_sI
=
\left\{\left(\wh{\phi}_s(y;\eta), (y;-\eta)\right) \; \middle| \; (y;\eta) \in \rT N \right\}
\subset \rT N \times \rT N
\end{equation}
for any $s \in I$ (see \eqref{eq:compset} for the definition of $A \circ B$).

\begin{theorem}[{\cite[Thm.~4.3]{GKS}}]
	For a homogeneous Hamiltonian isotopy $\wh{\phi} \colon \rT N \times I \to \rT N$, there exists a unique object $K \in \Dlb(N \times N \times I)$ satisfying the following conditions:
	\begin{enumerate}
		\renewcommand{\labelenumi}{$\mathrm{(\arabic{enumi})}$}
		\item $\MS(K) \subset \Lambda_{\wh{\phi}} \cup T^*_{N \times N \times I}(N \times N \times I)$,
		\item $K|_{N \times N \times \{0\}} \simeq \bfk_{\Delta_N}$, where $\Delta_N$ is the diagonal of $N \times N$.
	\end{enumerate}
	Moreover $K$ is simple along $\Lambda_{\wh{\phi}}$ and both projections $\Supp(K) \to N \times I$ are proper.
\end{theorem}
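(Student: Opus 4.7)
The plan is a Cauchy-problem argument: prove uniqueness first, then construct $K$ locally on short subintervals of $I$ via quantized contact transformations, and glue the local pieces using uniqueness. Conceptually, one propagates the initial datum $\bfk_{\Delta_N}$ along the flow $\wh{\phi}_s$.

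For uniqueness, suppose $K$ and $K'$ both satisfy (1) and (2). I would analyze $\mu hom(K,K')$ on $T^*(N\times N\times I)$. By \pref{prp:muhomss}, its support is contained in $\MS(K)\cap\MS(K') \subset \Lambda_{\wh{\phi}} \cup T^*_{N\times N\times I}(N\times N\times I)$, and its own microsupport is controlled by the normal cone $C(\MS(K'),\MS(K))$. Since $\MS(K)$ and $\MS(K')$ coincide on $\Lambda_{\wh{\phi}}$, this normal cone reduces to $T\Lambda_{\wh{\phi}}$ along $\Lambda_{\wh{\phi}}$, so $\mu hom(K,K')|_{\Lambda_{\wh{\phi}}}$ is locally constant. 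Because $\Lambda_{\wh{\phi}}\cong \rT N\times I$ is connected, the identity morphism of $\bfk_{\Delta_N}$ at $s=0$ determines, via this local constancy, a unique global section of $\mu hom(K,K')$ over $\Lambda_{\wh{\phi}}$. Passing to $\cRHom(K,K') \simeq R\pi_*\,\mu hom(K,K')$ via \pref{prp:muhomprop}(i), this produces a morphism $u\colon K\to K'$ extending the identity at $s=0$. The cone of $u$ satisfies the same microsupport bound and vanishes at $s=0$, so the same propagation argument (or an application of the microlocal Morse lemma \pref{prp:microlocalmorse} along the projection to $I$) forces it to vanish everywhere.

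For local existence, I fix $s_0\in I$ and a small neighborhood $J\ni s_0$ on which the relative homogeneous contact transformation $\wh{\phi}_s\wh{\phi}_{s_0}^{-1}$ is $C^\infty$-close to the identity. Its graph then admits a generating function, from which one constructs an explicit simple kernel $K_J\in\Db(N\times N\times J)$ satisfying the microsupport condition and reducing to $\bfk_{\Delta_N}$ at $s=s_0$. Chaining such constructions starting from $s=0$ with datum $\bfk_{\Delta_N}$ gives solutions of the Cauchy problem on arbitrarily large subintervals of $I$; by uniqueness they assemble into a single global $K\in\Dlb(N\times N\times I)$.

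Simplicity of $K$ along $\Lambda_{\wh{\phi}}$ is then automatic: the initial datum $\bfk_{\Delta_N}$ is simple along $T^*_{\Delta_N}(N\times N)$, and the local constructions via quantized contact transformations preserve simplicity (\pref{prp:qctshift}). Properness of the projections $\Supp(K)\to N\times I$ follows from the compact support of the original Hamiltonian isotopy $\phi$ on $T^*M$, which via the conification procedure of \pref{diag:homog} confines the non-diagonal part of $\Supp(K)$ to a compact region in the $N$-fibers. The main obstacle in the proof is the uniqueness step: making the propagation-of-sections argument for $\mu hom(K,K')$ fully rigorous requires careful microsupport bookkeeping and a precise description of the geometry of $\Lambda_{\wh{\phi}}$ inside $T^*(N\times N\times I)$, which is the technical heart of the GKS construction.
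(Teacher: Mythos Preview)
The paper does not prove this theorem: it is quoted verbatim from \cite[Theorem~4.3]{GKS} and used as a black box, with no proof or sketch provided here. There is consequently nothing in this paper to compare your proposal against.

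For what it is worth, your outline is broadly faithful to the actual argument in \cite{GKS}: uniqueness is indeed established first (via a propagation argument showing that the microsupport condition forces $K$ to be determined by its value at $s=0$), local solutions are built from the graph of $\wh{\phi}$, and uniqueness is used to glue. One caution: the theorem in \cite{GKS} is stated for an arbitrary homogeneous Hamiltonian isotopy of $\rT N$, not only for the conification of a compactly supported isotopy of $T^*M$, so your properness argument (which relies on the compact support of $\phi$) only covers the special case actually used in this paper. Also, the uniqueness argument in \cite{GKS} is organized somewhat differently from your $\mu hom$ computation: they work directly with the category $\Dlb_{\Lambda_{\wh{\phi}}\cup 0}(N\times N\times I)$ and show that restriction to any $s\in I$ is an equivalence, rather than propagating a section of $\mu hom(K,K')$.
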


The object $K$ is called the \emph{sheaf quantization} of $\wh{\phi}$.
For any $s \in I$, $\mathring{\MS}(K|_{N \times N \times \{s\}}) \subset \Lambda_{\wh{\phi}} \circ T^*_{s}I$ and $K|_{N \times N \times \{s\}}$ is a quantized contact transformation associated with $\wh{\phi}_{s} \colon \Omega_+ \simto \Omega_+$.

\subsection{Tamarkin's non-displaceability theorem (\cite{Tamarkin, GS14})}\label{subsec:Tamarkin}

A diffeomorphism $\psi \colon T^*M \to T^*M$ is said to be a Hamiltonian diffeomorphism if there exists a Hamiltonian isotopy with compact support $\phi=(\phi_s)_s \colon T^*M \times [0,1] \to T^*M$ such that $\phi_1=\psi$ and $\phi_0=\id_{T^*M}$.
Two  compact subsets $A$ and $B$ of $T^*M$ are said to be \emph{mutually non-displaceable} if $A \cap \psi(B) \neq \emptyset$ for any Hamiltonian diffeomorphism $\psi \colon T^*M \to T^*M$.
The non-displaceability problem is to determine whether or not given two compact subsets are mutually non-displaceable.
Tamarkin~\cite{Tamarkin} (see also Guillermou--Schapira~\cite{GS14}) considered some categories consisting of sheaves on $M \times \bR$ and deduced a criterion for non-displaceability using them.

We set $\Omega_+\coloneqq \{\tau >0 \} \subset T^*(M \times \bR)$ as before, where $(t;\tau)$ denotes the homogeneous symplectic coordinate system on $T^*\bR$.
We define the maps
\begin{equation}
\begin{split}
& \qquad \tilde{q}_1,\tilde{q}_2,s_\bR \colon M \times \bR \times \bR \lto M \times \bR, \\
\tilde{q}_1(x,t_1,t_2)&=(x,t_1), \ \tilde{q}_2(x,t_1,t_2)=(x,t_2), \ s_\bR(x,t_1,t_2)=(x,t_1+t_2).
\end{split}
\end{equation}
If there is no risk of confusion, we simply write $s$ for $s_\bR$.
We also define the involution
\begin{equation}
i \colon M \times \bR \to M \times \bR, \
(x,t) \longmapsto (x,-t).
\end{equation}

\begin{definition}
	For $F,G \in \Db(M \times \bR)$, one sets
	\begin{align}
	F \star G & \coloneqq Rs_!(\tilde{q}_1^{-1}F \otimes \tilde{q}_2^{\,-1}G), \\
	\cHom^\star(F,G)
	& \coloneqq R \tilde{q}_{1*} \cRHom(\tilde{q}_2^{\,-1}F,s^!G) \\
	& \ \simeq Rs_*\cRHom(\tilde{q}_2^{\, -1}i^{-1}F, \tilde{q}_1^!G).  \notag
	\end{align}
\end{definition}

Note that the functor $\star$ is a left adjoint to $\cHom^\star$.
The functor
\begin{equation}
\bfk_{M \times [0,+\infty)} \star (\ast) \colon \Db(M \times \bR) \lto \Db(M \times \bR)
\end{equation}
defines a projector on the left orthogonal ${}^\perp \Db_{\{\tau \le 0\}}(M \times \bR)$.
By using this projector, Tamarkin proved that the localized category $\Db(M \times \bR;\Omega_+)$ is equivalent to the left orthogonal ${}^\perp \Db_{\{\tau \le 0\}}(M \times \bR)$:
\begin{equation}
\Db(M \times \bR;\Omega_+)=\Db(M \times \bR)/\Db_{\{\tau \le 0\}}(M \times \bR) \simto {}^\perp \Db_{\{\tau \le 0\}}(M \times \bR).
\end{equation}

\begin{definition}[{\cite{Tamarkin}}]
	One defines
	\begin{equation}
		\cD(M)
		\coloneqq 
		\Db(M \times \bR;\Omega_+)
		\simeq 
		{}^\perp \Db_{\{\tau \le 0\}}(M \times \bR).
	\end{equation}
	For a compact subset $A$ of $T^*M$, one also defines a full subcategory $\cD_A(M)$ of $\cD(M)$ by
	\begin{equation}
		\cD_A(M)
		\coloneqq 
		\Db_{\rho^{-1}(A)}(M \times \bR;\Omega_+).
	\end{equation}
\end{definition}

For an object in $\cD(M)$, we take the canonical representative in ${}^\perp \Db_{\{\tau \le 0\}}(M \times \bR)$ via the projector unless otherwise specified.

\begin{proposition}[{\cite[Lem.~3.18]{GS14}}]\label{prp:morD}
	Let $F,G \in \cD(M)$.
	Then 
	\begin{equation}
	\Hom_{\cD(M)}(F,G) \simeq
	H^0 \RG_{M \times [0,+\infty)}(M \times \bR;\cHom^\star(F,G)).
	\end{equation}
\end{proposition}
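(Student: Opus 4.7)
The plan is to derive the formula purely from the adjunction between $\star$ and $\cHom^\star$, combined with the fact that the canonical representative of an object of $\cD(M)$ is a fixed point of Tamarkin's projector $\bfk_{M \times [0,+\infty)} \star (\ast)$.

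First, I would establish the adjunction
\begin{equation*}
\Hom_{\Db(M \times \bR)}(H \star F, G) \simeq \Hom_{\Db(M \times \bR)}(H, \cHom^\star(F,G))
\end{equation*}
for any $F, G, H \in \Db(M \times \bR)$. This is a routine consequence of the standard adjunctions $(Rs_!, s^!)$ and $(\otimes, \cRHom)$, together with the $(\tilde q_{1}^{-1}, R\tilde q_{1*})$ adjunction: unwinding $H \star F = Rs_!(\tilde q_1^{-1}H \otimes \tilde q_2^{-1}F)$ and pushing the right-hand-side factor across, one lands on $R\tilde q_{1*} \cRHom(\tilde q_2^{-1}F, s^! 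G) = \cHom^\star(F,G)$, which is precisely the definition given.

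Second, I would specialise this adjunction to $H = \bfk_{M \times [0,+\infty)}$. By Tamarkin's construction, the functor $\bfk_{M \times [0,+\infty)} \star (\ast)$ is the projector onto ${}^\perp\Db_{\{\tau \le 0\}}(M \times \bR)$, and canonical representatives satisfy $\bfk_{M \times [0,+\infty)} \star F \simeq F$. Since $\cD(M)$ is equivalent to ${}^\perp\Db_{\{\tau \le 0\}}$ under the quotient functor, the $\Hom$ computed in $\cD(M)$ between canonical representatives coincides with the $\Hom$ in $\Db(M \times \bR)$. Hence
\begin{equation*}
\Hom_{\cD(M)}(F,G) \simeq \Hom_{\Db(M \times \bR)}(\bfk_{M \times [0,+\infty)} \star F, G) \simeq \Hom_{\Db(M \times \bR)}(\bfk_{M \times [0,+\infty)}, \cHom^\star(F,G)).
\end{equation*}

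Finally, I would rewrite the last term using $\RG_Z(\ast) = \cRHom(\bfk_Z, \ast)$ and the global-sections interpretation of $\Hom$:
\begin{equation*}
\Hom(\bfk_{M \times [0,+\infty)}, \cHom^\star(F,G)) = H^0 \RG_{M \times [0,+\infty)}(M \times \bR; \cHom^\star(F,G)).
\end{equation*}
The only non-routine input is Tamarkin's theorem that $\bfk_{M \times [0,+\infty)} \star (\ast)$ is indeed the left-orthogonal projector (already invoked in the definition of $\cD(M)$ recalled just above), so the main conceptual step is simply recognising that the desired identity is the $H = \bfk_{M \times [0,+\infty)}$ instance of the $\star$-$\cHom^\star$ adjunction. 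I do not anticipate a serious obstacle beyond checking that swapping the order $\bfk_{M \times [0,+\infty)} \star F$ versus $F \star \bfk_{M \times [0,+\infty)}$ is harmless, which follows from the commutativity of convolution under the involution on $\bR$ and the symmetry between $\tilde q_1$ and $\tilde q_2$.
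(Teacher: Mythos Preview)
Your argument is correct and is precisely the standard proof: the $(\star,\cHom^\star)$ adjunction specialised to $H=\bfk_{M\times[0,+\infty)}$, together with the projector identity $\bfk_{M\times[0,+\infty)}\star F\simeq F$ for canonical representatives. The paper does not supply its own proof of this proposition---it is quoted from \cite[Lemma~3.18]{GS14}---and your outline matches that reference; your closing worry about the order in $\star$ is unnecessary here, since the adjunction you derived already has the kernel on the left, matching the projector as stated.
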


The following separation theorem is due to Tamarkin~\cite{Tamarkin}.

\begin{theorem}[{\cite[Thm.~3.2]{Tamarkin} and \cite[Thm.~3.28]{GS14}}]\label{thm:separation}
	Let $A$ and $B$ be compact subsets of $T^*M$ and assume that $A \cap B=\emptyset$.
	Then for $F \in \cD_A(M)$ and $G \in \cD_B(M)$, one has $\Hom_{\cD(M)}(F,G) \simeq 0$.
\end{theorem}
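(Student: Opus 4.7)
The plan is to convert the Hom space in $\cD(M)$ into a local cohomology group, estimate the microsupport of the relevant sheaf using the formula for $\cHom^\star$, and then apply microlocal Morse theory. First I would invoke \pref{prp:morD} to rewrite
\begin{equation*}
\Hom_{\cD(M)}(F,G) \simeq H^0 \RG_{M \times [0,+\infty)}(M \times \bR; H), \quad H := \cHom^\star(F,G),
\end{equation*}
so it suffices to prove $\RG_{M \times [0,+\infty)}(M \times \bR; H) \simeq 0$.

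The heart of the argument is a microsupport estimate for $H$ in $\Omega_+$. Using the identification $H \simeq Rs_*\cRHom(\tilde q_2^{-1}i^{-1}F, \tilde q_1^!G)$, I would combine the standard microsupport bound for pullbacks (along the submersions $\tilde q_1, \tilde q_2$ and the diffeomorphism $i$), \pref{prp:SSHom} for $\cRHom$, and \pref{prp:nonproper} for the non-proper direct image $Rs_*$ along the addition map. Since $F \in \cD_A(M)$ and $G \in \cD_B(M)$ may be assumed (using the projector) to satisfy $\MS(F) \cap \Omega_+ \subset \rho^{-1}(A)$ and $\MS(G) \cap \Omega_+ \subset \rho^{-1}(B)$, a careful bookkeeping yields
\begin{equation*}
\MS(H) \cap \Omega_+ \subset \{(x,t;\tau(\eta_B-\eta_A),\tau) \mid \tau > 0,\, (x;\eta_A) \in A,\, (x;\eta_B) \in B\}.
\end{equation*}

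The disjointness hypothesis then enters as follows. A covector $(x,t;0,1) \in \Gamma_{dt}$ would lie in $\MS(H)$ only if $\tau(\eta_B-\eta_A) = 0$ with $\tau = 1$, which forces $(x;\eta_A) = (x;\eta_B) \in A \cap B$, contradicting $A \cap B = \emptyset$. Thus $\Gamma_{dt} \cap \MS(H) = \emptyset$. Applying the microlocal Morse lemma (\pref{prp:microlocalmorse}) to the function $\varphi(x,t)=t$ then makes the restriction $\RG(M \times (-\infty,b);H) \to \RG(M \times (-\infty,a);H)$ an isomorphism for every $a \le b$. Feeding this into the distinguished triangle
\begin{equation*}
\RG_{M \times [0,+\infty)}(M \times \bR;H) \to \RG(M \times \bR;H) \to \RG(M \times (-\infty,0);H) \xrightarrow{+1}
\end{equation*}
and letting $b \to +\infty$ gives the desired vanishing.

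The main obstacle is verifying the properness hypothesis of \pref{prp:microlocalmorse}: the function $t$ is not \emph{a priori} proper on $\Supp(H)$, since the canonical representatives of objects of $\cD(M) \simeq {}^\perp\Db_{\{\tau \le 0\}}(M \times \bR)$ (obtained via convolution with $\bfk_{M \times [0,+\infty)}$) typically have support unbounded in the positive $t$-direction. I expect this to be handled either by passing to a compactly-supported proxy of $F$ or $G$ before taking the projector, or by a limit argument via $\RG_{M \times [0,b)}$ as $b \to +\infty$ combined with the non-characteristic deformation lemma. A secondary technical point is that the $\cRHom$ step requires the non-characteristic assumption of \pref{prp:SSHom}, which has to be checked using that $\MS(\tilde q_2^{-1}i^{-1}F)$ is concentrated in the $\tau_1=0$ direction while $\MS(\tilde q_1^!G)$ lives in the $\tau_2=0$ direction, so that their intersection is contained in the zero section.
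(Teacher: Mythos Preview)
Your approach is essentially the same as the paper's: rewrite the Hom via \pref{prp:morD}, estimate the microsupport of $H=\cHom^\star(F,G)$, observe $\Gamma_{dt}\cap\MS(H)=\emptyset$ from $A\cap B=\emptyset$, and conclude by the microlocal Morse lemma. The only real difference is cosmetic: the paper applies \pref{prp:microlocalmorse} to $\RG_{M\times[0,+\infty)}H$ (whose support is contained in $\{t\ge 0\}$, so sections over $\{t<0\}$ vanish trivially), whereas you apply it to $H$ and then invoke the distinguished triangle.

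However, the ``main obstacle'' you spend the last paragraph on is not an obstacle. Recall the standing hypothesis at the start of \pref{sec:quantandnondisp}: $M$ is \emph{compact}. Hence the second projection $t\colon M\times\bR\to\bR$ is already a proper map, and condition~(1) of \pref{prp:microlocalmorse} is automatic for any object on $M\times\bR$. No compactly-supported proxy, no limit argument, and no non-characteristic deformation lemma is needed; you may take $b=+\infty$ directly (this case is explicitly allowed in \pref{prp:microlocalmorse}). Once you drop that paragraph, your argument is complete and matches the paper's.
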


\begin{proof}
	We give the outline of the proof.
	Denote by $t \colon M \times \bR \to \bR$ the function $(x,t) \mapsto t$.
	Recall the notation $\Gamma_{dt}=\{(x,t;0,1)\} \subset T^*(M \times \bR)$.
	Then one can show that
	\begin{equation}
	\Gamma_{dt} \cap \MS(\RG_{M \times [0,+\infty)}\cHom^\star(F,G))=\emptyset.
	\end{equation}
	Hence by \pref{prp:morD} and the microlocal Morse lemma (\pref{prp:microlocalmorse}), we have the conclusion.
\end{proof}

Using sheaf quantization of Hamiltonian isotopies, we can define Hamiltonian deformations in the category $\cD(M)$ as follows.
Let $\psi \colon T^*M \to T^*M$ be a Hamiltonian diffeomorphism and $\phi=(\phi_s)_s \colon T^*M \times I \to T^*M$ be a Hamiltonian isotopy with compact support satisfying $\psi=\phi_1$, where $I$ is an open interval containing the closed interval $[0,1]$.
Let $\wh{\phi} \colon \rT(M \times \bR) \times I \to \rT(M \times \bR)$ be the associated homogeneous Hamiltonian isotopy and $K \in \Dlb(M \times \bR \times M \times \bR \times I)$ be the sheaf quantization of $\wh{\phi}$.
Then the composition with $K_{1}\coloneqq K|_{M \times \bR \times M \times \bR \times \{1\}} \in \Db(M \times \bR \times M \times \bR)$ defines a functor
\begin{equation}\label{eq:functorPsi}
\Psi=K_{1} \circ (\ast) \colon
\Db(M \times \bR) \lto \Db(M \times \bR),
\end{equation}
which induces a functor $\Psi \colon \cD(M) \to \cD(M)$ (see \cite[Prop.~3.29]{GS14})\footnote{Although $\wh{\phi}$ does not satisfy \cite[(3.3)]{GKS} in general, $K|_{M \times \bR \times M \times \bR \times J}$ is bounded for any relatively compact subinterval $J$ of $I$. The author learned the detailed proof from S.~Guillermou. One can prove it using the properness of $\Supp(K) \to M \times \bR \times I$ and the fact that $K \simeq \sigma^{-1}K'$, where $K' \in \Dlb(M \times M \times \bR \times I)$ and $\sigma \colon M \times \bR \times M \times \bR \times I \to M \times M \times \bR \times I, (x,t,x',t',s) \mapsto (x,x',t-t',s)$. }.
Let $A$ be a compact subset of $T^*M$.
Then, for any $F \in \cD_A(M)$, \pref{prp:SScomp} and the commutative diagram \eqref{diag:homog} imply 
\begin{equation}
\MS(K_1 \circ F) \cap \Omega_+
\subset (\Lambda_{\wh{\phi}} \circ T^*_{1}I) \circ \rho^{-1}(A)
=\wh{\phi}_1(\rho^{-1}(A))
\subset \rho^{-1}(\psi(A)).
\end{equation}
Hence the functor also induces $\Psi \colon \cD_A(M) \to \cD_{\psi(A)}(M)$.

Tamarkin~\cite{Tamarkin} proved the non-displaceability theorem by using the category $\cD(M)$ and torsion objects, which we will explain below.
Moreover, Guillermou--Schapira~\cite{GS14} proved that torsion objects form a triangulated subcategory and introduced the quotient category $\cT(M)$, which is invariant under Hamiltonian deformations.
For $c \in \bR$, we define the translation map
\begin{equation}
T_c \colon M \times \bR \to M \times \bR, (x,t) \mapsto (x,t+c).
\end{equation}
For $F \in {}^\perp \Db_{\{\tau \le 0\}}(M \times \bR)$ and $c \in \bR_{\ge 0}$, there exists a canonical morphism $\tau_{0,c}(F) \colon F \to {T_c}_*F$.

\begin{definition}[{\cite{Tamarkin}}]
	An object $F \in {}^\perp \Db_{\{\tau \le 0\}}(M \times \bR)$ is said to be a \emph{torsion object} if $\tau_{0,c}(F)=0$ for some $c \ge 0$.
	Denote by $\cN_{\mathrm{tor}}$ the subcategory of torsion objects in ${}^\perp \Db_{\{\tau \le 0\}}(M \times \bR) \simeq \cD(M)$.
\end{definition}

Let $F \in {}^\perp \Db_{\{\tau \le 0\}}(M \times \bR)$ and assume that $\Supp(F) \subset M \times [a,b]$ for some compact interval $[a,b]$ of $\bR$.
Then $F$ is a torsion object.

\begin{proposition}[{\cite[Thm.~5.4]{GS14}}]
	The subcategory $\cN_{\mathrm{tor}}$ is a full triangulated subcategory of $\cD(M)$.
\end{proposition}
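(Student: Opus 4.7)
The plan is to verify the three defining conditions of a full triangulated subcategory: stability under isomorphisms, stability under shifts, and the two-out-of-three property for distinguished triangles. Since $\cN_{\mathrm{tor}}$ is already full by definition, only these three closure properties need checking.

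I expect stability under isomorphisms and shifts to be essentially immediate from the definition: the morphism $\tau_{0,c}$ is natural, and $\tau_{0,c}(F[n]) = \tau_{0,c}(F)[n]$, so each of these preserves $c$-torsion for the same $c$. The real content is the third property. By rotating distinguished triangles, it reduces to a single key lemma: if $F \to G \to H \xrightarrow{+1}$ is distinguished in $\cD(M)$ with $\tau_{0,c_F}(F) = 0$ and $\tau_{0,c_G}(G) = 0$, then $\tau_{0, c_F + c_G}(H) = 0$.

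To prove the key lemma, I would apply the natural transformation $\tau_{0, c_F}$ to the whole triangle, obtaining a morphism of distinguished triangles
\[
\begin{CD}
F @>u>> G @>v>> H @>w>> F[1] \\
@VV 0 V @VV{\tau_{0,c_F}(G)}V @VV{\tau_{0,c_F}(H)}V @VV 0 V \\
{T_{c_F}}_* F @>u'>> {T_{c_F}}_* G @>v'>> {T_{c_F}}_* H @>w'>> {T_{c_F}}_* F[1]
\end{CD}
\]
in which the leftmost and rightmost verticals both vanish. Commutativity of the rightmost square then gives $w' \circ \tau_{0,c_F}(H) = 0$, so the long exact sequence obtained by applying $\Hom(H,-)$ to the bottom triangle factors $\tau_{0,c_F}(H)$ as $v' \circ \bar g$ for some $\bar g \colon H \to {T_{c_F}}_* G$. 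Composing with $\tau_{c_F, c_F + c_G}(H)$ and using naturality of the translation natural transformation, I can rewrite
\[
\tau_{0, c_F + c_G}(H)
\,=\,
\tau_{c_F, c_F + c_G}(H) \circ v' \circ \bar g
\,=\,
v'' \circ \tau_{c_F, c_F + c_G}(G) \circ \bar g,
\]
where $v'' = {T_{c_F + c_G}}_*(G \to H)$. Since $\tau_{c_F, c_F + c_G}(G) = {T_{c_F}}_* \tau_{0, c_G}(G) = 0$, the whole composition vanishes, which concludes the lemma.

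I do not expect a serious obstacle in this argument. The only delicate point is to work consistently inside the equivalent left-orthogonal subcategory ${}^\perp \Db_{\{\tau \le 0\}}(M \times \bR)$, taking the canonical representatives via Tamarkin's projector so that the translation morphisms $\tau_{0,c}$ and the distinguished triangles stay unambiguously defined within the subcategory; once this is in place, the proof is a routine diagram chase in a triangulated category.
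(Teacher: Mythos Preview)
Your argument is correct and is the standard proof of this fact. Note, however, that the paper does not actually prove this proposition: it is stated with a citation to \cite[Theorem~5.4]{GS14} and no proof is given in the present paper, so there is nothing to compare against here. Your diagram chase is essentially the argument that appears in \cite{GS14}.
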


\begin{definition}[{\cite[Def.~5.6]{GS14}}]
	The triangulated category $\cT(M)$ is defined as the quotient category of $\cD(M)$ by $\cN_{\mathrm{tor}}$:
	$\cT(M)\coloneqq \cD(M)/\cN_{\mathrm{tor}}$.
\end{definition}

Hom spaces in $\cT(M)$ are described as inductive limits of those in $\cD(M)$.

\begin{proposition}[{\cite[Prop.~5.7]{GS14}}]\label{prp:morT}
	Let $F,G \in \cD(M)$.
	Then 
	\begin{equation}
	\varinjlim_{c \to +\infty} \Hom_{\cD(M)}(F,{T_c}_*G)
	\simto
	\Hom_{\cT(M)}(F,G).
	\end{equation}
\end{proposition}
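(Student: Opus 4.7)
The plan is to invoke the standard Verdier quotient Hom formula. Since $\cN_{\mathrm{tor}}$ is a full triangulated subcategory of $\cD(M)$ (as recalled just before the proposition), morphisms in $\cT(M)=\cD(M)/\cN_{\mathrm{tor}}$ are computed as a filtered colimit,
\[
\Hom_{\cT(M)}(F,G)\;\simeq\;\varinjlim \Hom_{\cD(M)}(F,G'),
\]
the colimit running over morphisms $t\colon G\to G'$ in $\cD(M)$ with $\mathrm{cone}(t)\in\cN_{\mathrm{tor}}$ and transitions given by postcomposition. I will then show that the subsystem indexed by $\tau_{0,c}(G)\colon G\to{T_c}_*G$ ($c\ge 0$), with transitions ${T_c}_*\tau_{0,c'-c}(G)$ for $c\le c'$, is cofinal; this immediately yields the claim, since filtered colimits are invariant under passage to cofinal subsystems.

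\emph{Well-definedness of the subsystem.} Each $C_c:=\mathrm{cone}(\tau_{0,c}(G))$ still lies in $\cD(M)$, as $\MS(G),\MS({T_c}_*G)\subset\{\tau\ge 0\}$ forces $\MS(C_c)\subset\{\tau\ge 0\}$. To see $C_c\in\cN_{\mathrm{tor}}$, I apply the octahedral axiom to the factorization $\tau_{0,2c}(G)={T_c}_*\tau_{0,c}(G)\circ\tau_{0,c}(G)$, using ${T_c}_*C_c=\mathrm{cone}({T_c}_*\tau_{0,c}(G))$ by exactness of ${T_c}_*$, to obtain a distinguished triangle $C_c\to C_{2c}\to{T_c}_*C_c\xrightarrow{+1}$. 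Combined with the naturality diagram for $\tau_{0,c}$ applied to the defining triangle of $C_c$, a sign- and functoriality-check identifies the middle arrow with $\tau_{0,c}(C_c)$ and forces its vanishing, so $C_c\in\cN_{\mathrm{tor}}$.

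\emph{Cofinality.} Let $t\colon G\to G'$ in $\cD(M)$ satisfy $C:=\mathrm{cone}(t)\in\cN_{\mathrm{tor}}$, and choose $c\ge 0$ with $\tau_{0,c}(C)=0$. Naturality of $\tau_{0,c}$ yields a morphism of distinguished triangles
\[
\begin{array}{ccccccc}
C[-1] & \to & G & \xrightarrow{t} & G' & \to & C \\
\downarrow 0 & & \downarrow \tau_{0,c}(G) & & \downarrow \tau_{0,c}(G') & & \downarrow 0 \\
{T_c}_*C[-1] & \to & {T_c}_*G & \to & {T_c}_*G' & \to & {T_c}_*C,
\end{array}
\]
whose outer columns vanish. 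Commutativity of the leftmost square gives $\tau_{0,c}(G)\circ(C[-1]\to G)=0$, so the long exact sequence obtained by applying $\Hom(-,{T_c}_*G)$ to the top triangle yields $s\colon G'\to{T_c}_*G$ with $s\circ t=\tau_{0,c}(G)$. Thus every $t$ is dominated by $\tau_{0,c}(G)$, proving cofinality.

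The main obstacle is the verification that $C_c\in\cN_{\mathrm{tor}}$: tracing through the octahedral triangle together with the naturality square to conclude $\tau_{0,c}(C_c)=0$ requires some care with signs and functoriality of the translation. Once this is in place, cofinality is a formal diagram chase, and the asserted isomorphism follows from the Verdier quotient Hom formula applied to the cofinal subsystem.
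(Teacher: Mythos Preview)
The paper does not give its own proof of this proposition; it is quoted verbatim from \cite[Proposition~5.7]{GS14}. Your overall strategy---Verdier's description of Hom in a quotient as a filtered colimit, followed by cofinality of the system $\{\tau_{0,c}(G)\}_{c\ge 0}$---is exactly the approach of \cite{GS14}, and your cofinality argument is correct.

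The step that is not convincing is the verification that $C_c:=\mathrm{cone}(\tau_{0,c}(G))$ lies in $\cN_{\mathrm{tor}}$. The octahedral axiom applied to $\tau_{0,2c}(G)={T_c}_*\tau_{0,c}(G)\circ\tau_{0,c}(G)$ does produce a triangle $C_c\to C_{2c}\to {T_c}_*C_c\xrightarrow{+1}$, but there is no ``middle arrow'' in that triangle going from $C_c$ to ${T_c}_*C_c$, and the composite of the two displayed arrows is zero for trivial reasons; it is not clear from what you wrote why this composite should coincide with the \emph{specific} morphism $\tau_{0,c}(C_c)$ given by the natural transformation. The map of triangles coming from naturality of $\tau_{0,c}$ and the octahedral diagram both give maps $C_c\to{T_c}_*C_c$ that agree after precomposition with $T_{c*}G\to C_c$, but that map is not an epimorphism, so this does not suffice.

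A clean replacement: since $G\simeq\bfk_{M\times[0,\infty)}\star G$ and $\tau_{0,c}(G)$ is induced by the restriction $\bfk_{M\times[0,\infty)}\to\bfk_{M\times[c,\infty)}$, one has $C_c\simeq(\bfk_{M\times[0,c)}\star G)[1]$. Because $T_c(A\star B)\simeq(T_cA)\star B$ and $\tau_{0,c}$ is induced by convolution with $\bfk_{M\times[0,\infty)}\to\bfk_{M\times[c,\infty)}$, we get $\tau_{0,c}(C_c)=\tau_{0,c}(\bfk_{M\times[0,c)})\star\id_G[1]$; and $\tau_{0,c}(\bfk_{M\times[0,c)})\colon\bfk_{M\times[0,c)}\to\bfk_{M\times[c,2c)}$ vanishes since the supports are disjoint. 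This is essentially how \cite{GS14} handles it, and it closes the gap.
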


The following is the Hamiltonian invariance theorem due to Tamarkin~\cite{Tamarkin}.

\begin{theorem}[{\cite[Thm.~3.9]{Tamarkin} and \cite[Thm.~6.1]{GS14}}]\label{thm:invham}
	Let $\psi \colon T^*M \to T^*M$ be a Hamiltonian diffeomorphism and $\Psi \colon \cD(M) \to \cD(M)$ be the functor associated with $\psi$.
	Then, for any $F \in \cD(M)$, one has
	\begin{equation}
	F \simeq \Psi(F) \quad \text{in $\cT(M)$}.
	\end{equation}
\end{theorem}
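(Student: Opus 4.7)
The plan is to produce a canonical morphism $u \colon F \to (T_c)_* \Psi(F)$ in $\cD(M)$ for some $c > 0$ depending only on the Hamiltonian, together with a reverse morphism $v \colon \Psi(F) \to (T_c)_* F$, and to verify via \pref{prp:morT} that the composite $((T_c)_* v) \circ u \colon F \to (T_{2c})_* F$ equals the canonical translation morphism $\tau_{0,2c}(F)$. Since $\tau_{0,2c}(F)$ represents the identity in the colimit defining $\Hom_{\cT(M)}(F,F)$, this forces $u$ and $v$ to descend to mutually inverse isomorphisms in $\cT(M)$.

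Compact support of $\phi$ provides $c_0 > 0$ with $\sup_{s \in [0,1]} \sup_{T^*M} |H_s| < c_0$. Under the lift $\wh H_s(x,t;\xi,\tau) = \tau \cdot H_s(x;\xi/\tau)$, this gives $|\wh H_s| < c_0 \tau$ on $\Omega_+$, so by \eqref{eq:deflambdahatphi} the $ds$-component of covectors in $\Lambda_{\wh\phi}$ is dominated by $c_0 \tau$ in absolute value. I would set $L := K \circ F \in \Dlb(N \times I)$ (composition over the first $N$-factor), noting that $L|_{s=0} \simeq F$ and $L|_{s=1} \simeq \Psi(F)$, with $\MS(L) \cap (\Omega_+ \times T^*I)$ controlled by $\Lambda_{\wh\phi}$ via \pref{prp:SScomp}. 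Fixing any $c > c_0$ and applying the shear $\Phi^c \colon N \times I \to N \times I$, $(x,t,s) \mapsto (x, t+cs, s)$, I would form $L^c := \Phi^c_* L$. The cotangent lift of $\Phi^c$ sends $(\xi, \tau, \sigma) \mapsto (\xi, \tau, \sigma - c\tau)$, so the $ds$-component of $\MS(L^c)$ over $s \in [0,1]$ equals $\sigma - c\tau = -\tau(H_s + c) < 0$ on the $\Omega_+$ portion. In particular $ds \notin \MS(L^c)$ there.

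The microlocal Morse lemma \pref{prp:microlocalmorse} applied to $s \colon N \times I \to \bR$ --- with the properness hypothesis coming from $\Supp(K) \to N \times I$ being proper and $\Supp(\phi)$ compact --- combined with an object-level upgrade via the $\mu hom$ formalism of \pref{prp:muhomprop} and \pref{prp:microlocalcoh} (e.g.\ by working with $\bfk_{N \times [0,+\infty)} \otimes L^c$), produces a canonical propagation morphism $L^c|_{s=0} \to L^c|_{s=1}$, that is, $u \colon F \to (T_c)_* \Psi(F)$ in $\cD(M)$. Running the entire construction with $\phi^{-1}$ (whose generating Hamiltonian $-H_s \circ \phi_s$ is also compactly supported) yields the reverse morphism $v$. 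By the uniqueness part of \cite{GKS} applied to the concatenation of $\wh\phi$ and $\wh{\phi^{-1}}$, which must quantize to $\bfk_{\Delta_N}$ up to the accumulated translation by $2c$, the composite $((T_c)_* v) \circ u$ coincides with $\tau_{0,2c}(F)$, concluding the proof. The main obstacle is the third step: patching the microsupport bound across the boundary between $\Omega_+$ and its complement, and promoting the scalar-level Morse statement to a genuine object-level morphism in $\cD(M)$; a secondary subtlety is identifying the composite $((T_c)_* v) \circ u$ with $\tau_{0,2c}(F)$, which will rest on the simpleness and uniqueness of the sheaf quantizations from \cite{GKS}.
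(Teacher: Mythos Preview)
The paper does not supply its own proof of \pref{thm:invham}; it is quoted from \cite{Tamarkin} and \cite{GS14} and used as a black box. So there is no paper-proof to compare against, only the proofs in the cited references.

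Your sketch is essentially the argument of \cite[Theorem~6.1]{GS14} (which in turn formalizes Tamarkin's original idea): form the family object $K \circ F$ over $N \times I$, shear in the $t$-direction by $cs$ with $c$ dominating $\sup |H_s|$ so that the $ds$-component of the microsupport becomes strictly negative on $\Omega_+$, and then propagate from $s=0$ to $s=1$. The two obstacles you flag are exactly the ones that carry the technical weight in \cite{GS14}. For the first (the behaviour on $\{\tau \le 0\}$), the point is that one works not with $K \circ F$ directly but with its image under the projector $\bfk_{M \times [0,+\infty)} \star (\ast)$ onto ${}^\perp\Db_{\{\tau \le 0\}}(M \times \bR)$; after this projection the microsupport lies entirely in $\{\tau \ge 0\}$ and the shear estimate extends across the boundary. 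For the second (upgrading the scalar Morse statement to an object-level morphism), \cite{GS14} does not invoke \pref{prp:microlocalmorse} or $\mu hom$ as you suggest, but rather produces the morphism directly from the adjunction associated to the inclusion $\bfk_{M \times [0,c]} \hookrightarrow \bfk_{M \times [0,+\infty)}$ applied to the kernel; this gives $u$ and $v$ as honest morphisms in $\cD(M)$, and the identification $((T_c)_* v)\circ u = \tau_{0,2c}(F)$ then follows from the group-law uniqueness of $K$ in \cite{GKS}. Your route through \pref{prp:muhomprop} and \pref{prp:microlocalcoh} would be circuitous here and is not how the cited proofs proceed.
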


Combining \pref{thm:invham} with \pref{thm:separation} and \pref{prp:morT}, we can deduce the following non-displaceability theorem.

\begin{theorem}[{\cite[Thm.~3.1]{Tamarkin} and \cite[Cor.~6.3]{GS14}}]\label{thm:nondisp}
	Let $A$ and $B$ be compact subsets of $T^*M$.
	Assume that there exist $F \in \cD_A(M)$ and $G \in \cD_B(M)$ such that $\Hom_{\cT(M)}(F,G) \neq 0$.
	Then $A$ and $B$ are mutually non-displaceable.
\end{theorem}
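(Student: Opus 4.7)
The plan is to argue by contradiction: assume that $A$ and $B$ are mutually displaceable, so that there exists a Hamiltonian diffeomorphism $\psi \colon T^*M \to T^*M$ with $A \cap \psi(B) = \emptyset$, and derive $\Hom_{\cT(M)}(F,G) \simeq 0$, contradicting the hypothesis. The three ingredients in play are exactly the Hamiltonian invariance of $\cT(M)$ (\pref{thm:invham}), the separation theorem in $\cD(M)$ (\pref{thm:separation}), and the description of morphisms in $\cT(M)$ as a colimit of morphisms in $\cD(M)$ (\pref{prp:morT}). The entire argument is essentially a bookkeeping exercise that threads these three results together.

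More concretely, I would first pick a Hamiltonian isotopy $\phi=(\phi_s)_s$ with $\phi_1=\psi$ and form the associated functor $\Psi \colon \cD(M) \to \cD(M)$ of \eqref{eq:functorPsi}. The microsupport estimate for the composition with the sheaf quantization $K_1$ discussed after \eqref{eq:functorPsi} shows that $\Psi$ sends $\cD_B(M)$ into $\cD_{\psi(B)}(M)$, so $\Psi(G) \in \cD_{\psi(B)}(M)$. By \pref{thm:invham}, $G \simeq \Psi(G)$ in $\cT(M)$, hence
\begin{equation*}
\Hom_{\cT(M)}(F,G) \simeq \Hom_{\cT(M)}(F,\Psi(G)).
\end{equation*}
Then I would invoke \pref{prp:morT} to rewrite
\begin{equation*}
\Hom_{\cT(M)}(F,\Psi(G)) \simeq \varinjlim_{c \to +\infty} \Hom_{\cD(M)}(F,{T_c}_*\Psi(G)).
\end{equation*}

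The key observation at this point is that the translation $T_c$ does not move microsupports in the $T^*M$-direction, because the projection $\rho(x,t;\xi,\tau)=(x;\xi/\tau)$ ignores $t$; therefore ${T_c}_*\Psi(G)$ still belongs to $\cD_{\psi(B)}(M)$. Since $A \cap \psi(B) = \emptyset$ and both sets are compact, \pref{thm:separation} applied to $F \in \cD_A(M)$ and ${T_c}_*\Psi(G) \in \cD_{\psi(B)}(M)$ yields $\Hom_{\cD(M)}(F,{T_c}_*\Psi(G)) \simeq 0$ for every $c$. Passing to the colimit, $\Hom_{\cT(M)}(F,G) \simeq 0$, contradicting the assumption.

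The only genuinely delicate step is verifying the microsupport control of ${T_c}_*\Psi(G)$, and for that one just needs to check that neither the functor $\Psi$ nor the translation $T_c$ enlarges the image of the microsupport under $\rho$. The former is the content of the calculation already made after \eqref{eq:functorPsi} using \pref{prp:SScomp} and the commutativity of \eqref{diag:homog}; the latter is immediate from the explicit formula for $\rho$. Everything else is a direct application of the cited theorems, so no new technical obstacle is expected.
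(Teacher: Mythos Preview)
Your argument is correct and matches the paper's approach exactly: the paper simply states that the result follows by combining \pref{thm:invham}, \pref{thm:separation}, and \pref{prp:morT}, and you have spelled out precisely how these three ingredients fit together, including the needed observation that ${T_c}_*$ preserves $\cD_{\psi(B)}(M)$.
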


\subsection{Guillermou's sheaf quantization of compact exact Lagrangian submanifolds (\cite{Gu12, Gulec})}\label{subsec:Guillermou}

Recall that a Lagrangian submanifold $L$ of $T^*M$ is said to be \emph{exact} if the restriction of the Liouville 1-form $\alpha|_L$ is exact.
Guillermou \cite{Gu12,Gulec} proved the existence of sheaf quantizations of compact exact Lagrangian submanifolds of $T^*M$.

Let $L$ be a compact connected exact Lagrangian submanifold of $T^*M$ and choose a primitive of the Liouville 1-form $f \colon L \to \bR$ satisfying $df=\alpha|_{L}$.
We define the \emph{conification} $\wh{L}_f \subset \Omega_+$ of $L$ with respect to $f$ by
\begin{equation}\label{eq:conification}
\wh{L}_f
\coloneqq 
\{
(x,t;\tau \xi,\tau) \mid
\tau >0, (x;\xi) \in L, t=-f(x;\xi)
\}.
\end{equation}
If there is no risk of confusion, we simply write $\wh{L}$ instead of $\wh{L}_f$.

Consider the category $\Db_{\wh{L} \cup T^*_{M \times \bR}(M \times \bR)}(M \times \bR)$ consisting of sheaves whose microsupports are contained in $\wh{L} \cup T^*_{M \times \bR}(M \times \bR)$.
By the compactness of $L$, there is $A \in \bR_{>0}$ such that $\wh{L} \subset T^*(M \times (-A,A))$.
Hence for any $F \in \Db_{\wh{L} \cup T^*_{M \times \bR}(M \times \bR)}(M \times \bR)$, the restrictions $F|_{M \times (-\infty,-A)}$ and $F|_{M \times (A,+\infty)}$ are locally constant.

\begin{definition}[{\cite[Def.~20.1]{Gu12} and \cite[Def.~13.1]{Gulec}}]
	Let $A \in \bR_{>0}$ satisfying $\wh{L} \subset T^*(M \times (-A,A))$.
	For $F \in \Db_{\wh{L} \cup T^*_{M \times \bR}(M \times \bR)}(M \times \bR)$, one defines $F_-,F_+ \in \Db(M)$ by
	\begin{equation}
	F_-\coloneqq F|_{M \times \{ -t\}},
	\quad
	F_+\coloneqq F|_{M \times \{ t\}}
	\end{equation}
	for any $t>A$ (independent of $t$).
	One also defines $\Db_{\wh{L} \cup T^*_{M \times \bR}(M \times \bR),+}(M \times \bR)$ as the full subcategory of $\Db_{\wh{L} \cup T^*_{M \times \bR}(M \times \bR)}(M \times \bR)$ consisting of $F$ such that $F_- \simeq 0$.
\end{definition}

Guillermou~\cite{Gu12,Gulec} proved the following existence and uniqueness of sheaf quantizations of compact exact Lagrangian submanifolds.

\begin{theorem}[{\cite[Thm.~26.1]{Gu12} and \cite[Thm.~18.1]{Gulec}}]\label{thm:Guiquan}
	Let $L, f$, and $\wh{L}=\wh{L}_f$ be as above.
	\begin{enumerate}
		\item For any rank $1$ locally constant sheaf $\cL \in \Module(\bfk_M)$, there exists an object $F \in \Db_{\wh{L} \cup T^*_{M \times \bR}(M \times \bR),+}(M \times \bR)$ satisfying $F_+ \simeq \cL$.
		\item Moreover $F$ in \emph{(i)} is unique up to a unique isomorphism and simple along $\wh{L}$.
	\end{enumerate}
\end{theorem}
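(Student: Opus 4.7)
The plan is to treat uniqueness and existence separately, with simplicity of $F$ emerging as a consequence of the construction used in the existence part.

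For \emph{uniqueness}, suppose $F_1, F_2$ both lie in $\Db_{\wh{L} \cup T^*_{M \times \bR}(M \times \bR),+}(M \times \bR)$ with $F_{i,+} \simeq \cL$. I would study $\mu hom(F_1,F_2)$: by \pref{prp:muhomss}, its support intersected with $\Omega_+$ lies in $\MS(F_1) \cap \MS(F_2) \cap \Omega_+ \subset \wh{L}$. A pointwise microlocal analysis via \pref{prp:microlocalcoh} applied to a local defining function of $\wh L$, combined with the fact that near each $p \in \wh L$ the microsupport of each $F_i$ is a single smooth conic Lagrangian on which $F_i$ looks like a shifted constant sheaf, identifies $\mu hom(F_1,F_2)|_{\wh L}$ with a rank one local system on $\wh L$. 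Since $\wh L \simeq L \times \bR_{>0}$ deformation retracts onto the connected manifold $L$, this local system has a one-dimensional space of global sections, and the normalization $F_{i,+} \simeq \cL$ picks out a canonical generator. Applying $R\pi_*$ and using the identification $R\pi_*\mu hom(F_1,F_2) \simeq \cRHom(F_1,F_2)$ from \pref{prp:muhomprop}~(i), this generator promotes to a unique isomorphism $F_1 \simto F_2$ in $\Db(M \times \bR)$.

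For \emph{existence}, the plan is to build $F$ by gluing local simple sheaf quantizations. Near any point $p \in \wh L$, a homogeneous Darboux-type argument provides a homogeneous Hamiltonian isotopy of $\rT(M \times \bR)$ sending a neighborhood of $p$ onto a neighborhood of a point of the conormal $T^*_S(M \times \bR)$ to a smooth hypersurface $S \subset M \times \bR$. The sheaf quantization of this isotopy recalled in \pref{subsec:quantHam} is a quantized contact transformation; applied to a standard simple sheaf like $\bfk_{\{\varphi \ge 0\}}$ twisted by a pullback of $\cL$, it produces a local object $F^U$ with $\MS(F^U) \cap \Omega_+ = \wh L \cap T^*U$ which is simple along $\wh L \cap T^*U$. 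Covering $\wh L$ by finitely many such patches reduces the problem to a \v{C}ech-style gluing of the $F^U$, where the uniqueness statement above supplies canonical comparison isomorphisms on overlaps modulo scalar automorphisms of the local models.

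The \emph{main obstacle} is the vanishing of the gluing obstructions, which are classes on $L$ valued in the sheaf of automorphisms of local simple sheaf quantizations; concretely they are a sheaf-theoretic Maslov-type class in $H^1(L;\bZ)$ controlling shifts and a relative Stiefel--Whitney-type class in $H^2(L;\bZ/2)$ controlling signs. The expected route is Guillermou's propagation argument: one interpolates $\wh L$ through a family of conic Lagrangians to a configuration near the zero-section where a sheaf quantization is obvious, then transports the sheaf-theoretic data back along this family using the sheaf quantization of Hamiltonian isotopies together with the involutivity of microsupports. Compactness and exactness of $L \subset T^*M$ enter essentially: exactness provides the globally defined primitive $f$ needed to form $\wh L$, and compactness gives the finiteness needed to control the family and execute the propagation.
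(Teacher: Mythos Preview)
The paper does not prove this theorem. \pref{thm:Guiquan} is quoted from Guillermou's work \cite{Gu12,Gulec} (see the citation in the theorem heading) and is used as a black box throughout the rest of the paper; no argument for it appears anywhere in the text. There is therefore nothing in the paper to compare your proposal against.

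That said, a brief comment on your sketch relative to Guillermou's actual proof: your uniqueness outline is close in spirit to what Guillermou does, but your existence argument understates the difficulty. The ``propagation argument'' you describe---deform $\wh L$ to a configuration near the zero-section and transport back---is not available in general, since an arbitrary compact exact Lagrangian $L \subset T^*M$ is not known to be Hamiltonian isotopic to anything standard; indeed, proving such statements is one of the \emph{applications} of the sheaf quantization, not an input to its construction. Guillermou instead builds the quantization intrinsically via a doubling construction and a detailed analysis of the Kashiwara--Schapira stack of microlocal sheaves on $\wh L$, and the vanishing of the Maslov and relative $w_2$ obstructions is a substantial theorem in its own right rather than a byproduct of a deformation. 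Your sketch correctly identifies the obstruction classes but does not supply a mechanism for killing them.
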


We call the object $F \in \Db_{\wh{L} \cup T^*_{M \times \bR}(M \times \bR),+}(M \times \bR)$ in (i) the \emph{simple sheaf quantization} of $\wh{L}$ with respect to the rank $1$ locally constant sheaf $\cL$.
Moreover if $\cL$ is the constant sheaf $\bfk_M$, that is, $F_+ \simeq \bfk_M$, then $F$ is said to be the \emph{canonical sheaf quantization} of $\wh{L}$.
Note that the simple sheaf quantization of $\wh{L}$ with respect to $\cL$ is of the form $F \otimes q_M^{-1}
\cL$, where $F$ is the canonical sheaf quantization and $q_M \colon M \times \bR\to M$ is the projection.
We sometimes write a sheaf quantization associated with $L$ (and $f$) instead of $\wh{L}$ for simplicity.

\section{Intersections of compact exact Lagrangian submanifolds in cotangent bundles and sheaf quantization}\label{sec:intersection}

In this section we study intersections of compact exact Lagrangian submanifolds in cotangent bundles, using Tamarkin's category and Guillermou's sheaf quantizations.
In particular, we prove \pref{thm:intromain}, a Morse--Bott-type inequality for clean Lagrangian intersections.
Throughout this section,  for $i=1,2$ let $L_i$ be a compact connected exact Lagrangian submanifold and $f_i \colon L_i \to \bR$ be a primitive of the Liouville 1-form satisfying $df_i=\alpha|_{L_i}$.
We denote by $\Lambda_i\coloneqq \wh{L_i}$ the conification of $L_i$ with respect to $f_i$.
Moreover, let $F_i \in \Db_{\Lambda_i \cup T^*_{M \times \bR}(M \times \bR)}(M \times \bR)$ be a simple sheaf quantization of $\Lambda_i$.
Until the end of \pref{subsec:microlocalization-of-chomstar}, we do \emph{not} assume that $L_1$ and $L_2$ intersect cleanly.

\subsection{Non-displaceability of compact exact Lagrangian submanifolds}\label{subsec:nondisplagr}

In this subsection we prove that the Hom space in $\cT(M)$ between the canonical sheaf quantizations associated with compact exact Lagrangian submanifolds is isomorphic to the cohomology of the base manifold $M$.
Combined with \pref{thm:nondisp}, this implies the non-displaceability.

First we give a preliminary result useful to calculate Hom spaces in $\cD(M)$.

\begin{lemma}\label{lem:orthogonal}
	Let $L$ be a compact connected exact Lagrangian submanifold of $T^*M$ and $\Lambda=\wh{L}$ be the conification of $L$ with respect to some primitive.
	Then
	\begin{equation}
	\Db_{\Lambda \cup T^*_{M \times \bR}(M \times \bR),+}(M \times \bR) 
	\subset
	{}^\perp \Db_{\{\tau \le 0\}}(M \times \bR).
	\end{equation}
\end{lemma}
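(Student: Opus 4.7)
The plan is to show, for each $F$ in the left-hand side and each $G \in \Db_{\{\tau \le 0\}}(M \times \bR)$, that $\RG(M \times \bR; \cRHom(F, G)) \simeq 0$, which is exactly the vanishing of all Hom's from $F$ to $G$ in $\Db(M \times \bR)$ and therefore places $F$ in ${}^\perp \Db_{\{\tau \le 0\}}(M \times \bR)$. Setting $H := \cRHom(F, G)$, the strategy is to apply the microlocal Morse lemma (\pref{prp:microlocalmorse}) to $H$ with the height function $\varphi(x, t) := t$: I will show that $\MS(H)$ avoids the covector $dt$ everywhere and that $\Supp(H)$ is bounded below in the $t$-direction, so that the lemma compresses $\RG(M \times \bR; H)$ down to sections over a slice $\{t < a\}$ disjoint from $\Supp(H)$, which are automatically zero.

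The microsupport bound comes from \pref{prp:SSHom}. Because $\Lambda \subset \Omega_+ = \{\tau > 0\}$ while $\MS(G) \subset \{\tau \le 0\}$, the intersection $\MS(F) \cap \MS(G)$ is contained in $T^*_{M \times \bR}(M \times \bR)$, so the hypothesis of \pref{prp:SSHom} is satisfied and
\begin{equation}
\MS(H) \subset \MS(F)^a + \MS(G) \subset \{\tau \le 0\},
\end{equation}
using that $\MS(F)^a \subset \Lambda^a \cup T^*_{M \times \bR}(M \times \bR) \subset \{\tau \le 0\}$ (the antipode flips $\Lambda$ into $\{\tau < 0\}$) and that fiberwise addition preserves the inequality $\tau \le 0$. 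Since $dt$ has $\tau$-component equal to $1$, this covector is nowhere in $\MS(H)$.

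For the support estimate, compactness of $L$ and continuity of the primitive $f$ yield an $A > 0$ with $\Lambda \subset T^*(M \times (-A, A))$. On $M \times (-\infty, -A)$ the microsupport of $F$ lies in the zero-section, so $F|_{M \times (-\infty, -A)}$ has locally constant cohomology sheaves; as $M \times (-\infty, -A)$ is connected (because $M$ is) and the restriction of each such sheaf to any slice $M \times \{-t\}$ with $t > A$ is the zero object $F_-$, these cohomology sheaves all vanish. Hence $F|_{M \times (-\infty, -A)} \simeq 0$, which gives $\Supp(H) \subset \Supp(F) \subset M \times [-A, +\infty)$. Compactness of $M$ then makes $\varphi = t$ proper on $\Supp(H)$, so \pref{prp:microlocalmorse} with any $a < -A$ and $b = +\infty$ yields
\begin{equation}
\RG(M \times \bR; H) \simeq \RG(M \times (-\infty, a); H) \simeq 0,
\end{equation}
where the second isomorphism is because $\Supp(H) \cap (M \times (-\infty, a)) = \emptyset$.

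The only delicate point is really the microsupport estimate for $\cRHom(F,G)$: it is essential that taking the antipode flips $\Lambda$ from $\{\tau > 0\}$ into $\{\tau < 0\}$, so that both $\MS(F)^a$ and $\MS(G)$ lie in the closed half-space $\{\tau \le 0\}$ and their fiberwise sum still avoids the covector $dt$. Once that alignment is observed, both the properness and the Morse step are routine, and no further input is required.
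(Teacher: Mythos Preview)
Your proof is correct and follows essentially the same approach as the paper's own proof: estimate $\MS(\cRHom(F,G)) \subset \{\tau \le 0\}$ via \pref{prp:SSHom}, use $F_- \simeq 0$ and compactness to bound the support from below in $t$, and apply the microlocal Morse lemma with the height function $t$. Your version is simply more explicit about verifying the hypotheses (the zero-section condition for \pref{prp:SSHom}, why $F|_{M\times(-\infty,-A)}$ vanishes, and properness of $t$ on the support).
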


\begin{proof}
	By compactness, there exists a constant $B \in \bR$ such that $\Lambda \subset T^*(M \times (B,+\infty))$.
	Let $F \in \Db_{\Lambda \cup T^*_{M \times \bR}(M \times \bR),+}(M \times \bR)$ and $G \in \Db_{\{\tau \le 0\}}(M \times \bR)$.
	Since $\Lambda \subset \{\tau >0\}$, by \pref{prp:SSHom}, we have $\MS(\cRHom(F,G)) \subset \{\tau \le 0\}$.
	Applying the microlocal Morse lemma (\pref{prp:microlocalmorse}) to $\cRHom(F,G)$ and the function $t \colon M \times \bR \to \bR, (x,t) \mapsto t$, we get $\RHom(F,G) \simeq 0$ by the inclusion $\Supp(\cRHom(F,G)) \subset M \times [B,+\infty)$.
\end{proof}

\begin{proposition}\label{prp:cohlagr}
	Let $\cL_i\coloneqq (F_i)_+ \in \Module(\bfk_M)$ be the locally constant sheaf of rank $1$ associated with the simple sheaf quantization $F_i$ for $i=1,2$.
	Then there exists $c_0 \in \bR_{\ge 0}$ such that $\Hom_{\cD(M)}(F_2,{T_c}_*F_1[k])$ is isomorphic to $H^k(M;\cL_1 \otimes \cL_2^{\otimes -1})$ for any $c \ge c_0$ and $k \in \bZ$.
	In particular, 
	\begin{equation}\label{eq:lagmorT}
	\Hom_{\cT(M)}(F_2,F_1[k])
	\simeq
	H^k(M;\cL_1 \otimes \cL_2^{\otimes -1})
	\quad \text{for any $k \in \bZ$}.
	\end{equation}
\end{proposition}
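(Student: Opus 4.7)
The plan is to compute $\Hom_{\cD(M)}(F_2,{T_c}_*F_1[k])$ directly for all sufficiently large $c$, show it equals $H^k(M;\cL_1\otimes\cL_2^{\otimes-1})$, and then pass to the colimit via \pref{prp:morT}. The geometric intuition is that once $c$ is large, the support of ${T_c}_*F_1$ has been translated high enough in $t$ that $F_2$ is locally constant there and equal to the pullback of $\cL_2$, so the Hom reduces to a global-section computation on a twist of $F_1$.

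By \pref{lem:orthogonal} both $F_2$ and ${T_c}_*F_1$ belong to ${}^\perp\Db_{\{\tau\le 0\}}(M\times\bR)$, hence $\Hom_{\cD(M)}(F_2,{T_c}_*F_1[k])\simeq H^k\RHom_{\Db(M\times\bR)}(F_2,{T_c}_*F_1)$. Fix $A>0$ with $\Lambda_1\cup\Lambda_2\subset T^*(M\times(-A,A))$, so $\Supp(F_i)\subset M\times[-A,+\infty)$ and $F_i|_{M\times(A,+\infty)}\simeq q_M^{-1}\cL_i$. Pick $N>A$ and set $U:=M\times(N,+\infty)$, $Z:=M\times(-\infty,N]$. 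The distinguished triangle $(F_2)_Z\to F_2\to(F_2)_U\overset{+1}{\to}$, combined with the fact that $(F_2)_Z$ and ${T_c}_*F_1$ have disjoint supports once $c>N+A$, yields $\RHom(F_2,{T_c}_*F_1)\simeq\RHom((F_2)_U,{T_c}_*F_1)$. Since $\MS(F_2)\cap T^*U\subset T^*_UU$, the restriction $F_2|_U$ is locally constant, so $(F_2)_U\simeq j_!(q_M^{-1}\cL_2|_U)$ with $j\colon U\hookrightarrow M\times\bR$. The adjunction $j_!\dashv j^{-1}$ together with the identification $\cRHom(q_M^{-1}\cL_2,-)\simeq(-)\otimes q_M^{-1}\cL_2^{\otimes-1}$ (valid because $\cL_2$ has rank one) produces $\RHom(F_2,{T_c}_*F_1)\simeq\RG(U;({T_c}_*F_1'')|_U)$, where $F_1'':=F_1\otimes q_M^{-1}\cL_2^{\otimes-1}$ is itself a simple sheaf quantization of $\Lambda_1$ with $(F_1'')_+\simeq\cL_1\otimes\cL_2^{\otimes-1}$. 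Because $\Supp({T_c}_*F_1'')\subset M\times[c-A,+\infty)\subset U$, restriction to $U$ preserves $\RG$, and translation invariance of $\RG$ further reduces this to $\RG(M\times\bR;F_1'')$.

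The last step identifies $\RG(M\times\bR;F_1'')$ with $\RG(M;\cL_1\otimes\cL_2^{\otimes-1})$ via the microlocal Morse lemma (\pref{prp:microlocalmorse}) applied to $F_1''$ and $\varphi(x,t):=-t$. The function $\varphi$ is proper on $\Supp(F_1'')$ because $M$ is compact, and $d\varphi=(0,-1)$ lies outside $\MS(F_1'')\subset\Lambda_1\cup T^*_{M\times\bR}(M\times\bR)\subset\{\tau\ge 0\}$. Consequently $\RG(M\times\bR;F_1'')\simeq\RG(M\times(A',+\infty);F_1''|_{M\times(A',+\infty)})\simeq\RG(M;\cL_1\otimes\cL_2^{\otimes-1})$ for any $A'>A$. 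Combining everything, $\Hom_{\cD(M)}(F_2,{T_c}_*F_1[k])\simeq H^k(M;\cL_1\otimes\cL_2^{\otimes-1})$ for every $c\ge c_0:=N+A$, and \pref{prp:morT} then yields \eqref{eq:lagmorT}.

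The conceptual heart of the argument is the cut-off step reducing $F_2$ to its locally constant tail $(F_2)_U$. The main technical point I expect is verifying that the whole chain of identifications is sufficiently natural in $c$ that the transition maps of the inductive system $\{\Hom_{\cD(M)}(F_2,{T_c}_*F_1[k])\}_{c\ge c_0}$ act as the identity on the common value $H^k(M;\cL_1\otimes\cL_2^{\otimes-1})$; only with this naturality in hand does the colimit truly equal this cohomology rather than being abstractly isomorphic to it at each stage.
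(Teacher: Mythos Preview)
Your argument is correct and follows essentially the same route as the paper: reduce from $\cD(M)$ to $\Db(M\times\bR)$ via \pref{lem:orthogonal}, exploit that for large $c$ the support of ${T_c}_*F_1$ lies in the region where $F_2$ is already the pullback of $\cL_2$, and finish with the microlocal Morse lemma applied to $F_1\otimes q_M^{-1}\cL_2^{\otimes-1}$. The only cosmetic difference is that the paper replaces $F_2$ directly by $\cL_2\boxtimes\bfk_\bR$ in one step, whereas you make the cut-off explicit through the open/closed triangle and the $j_!\dashv j^{-1}$ adjunction; your concern about naturality in $c$ is legitimate but is handled automatically since every identification you use is functorial in the translation morphisms $\tau_{0,c'-c}$.
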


\begin{proof}
	The proof is very similar to those of \cite[Thm.~20.3]{Gu12} and \cite[Thm.~13.3]{Gulec}.
	By \pref{lem:orthogonal}, for any $k \in \bZ$, we have
	\begin{equation}
	\Hom_{\cD(M)}(F_2,{T_c}_*F_1[k])
	=
	\Hom_{\Db(M \times \bR)}(F_2,{T_c}_*F_1[k]).
	\end{equation}
	By the compactness of $L_1$ and $L_2$, there exists $A \in \bR_{>0}$ satisfying $\Lambda_1, \Lambda_2 \subset T^*(M \times (-A,A))$.
	Take a sufficiently large $c_0 \in \bR_{\ge 0}$ such that $c_0>2A$.
	Then, by the isomorphism $F_2|_{M \times (A, +\infty)} \simeq \cL_2 \boxtimes \bfk_{(A, +\infty)}$ and the inclusion $\Supp({T_c}_*F_1) \subset M \times (c-A,+\infty)$, we get
	\begin{equation}
	\begin{split}
	\RHom(F_2,{T_c}_*F_1)
	& \simeq
	\RHom(\cL_2 \boxtimes \bfk_{\bR},{T_c}_*F_1) \\
	& \simeq
	\RG(M \times \bR;F_1 \otimes (\cL_2^{\otimes -1} \boxtimes \bfk_{\bR}))
	\end{split}
	\end{equation}
	for any $c \ge c_0$.
	Since $\MS(F_1 \otimes (\cL_2^{\otimes -1} \boxtimes \bfk_{\bR})) \subset \{\tau \ge 0\}$, we can apply the microlocal Morse lemma (\pref{prp:microlocalmorse}) and obtain
	\begin{equation}
	\begin{split}
	\RG(M \times \bR;F_1 \otimes (\cL_2^{\otimes -1} \boxtimes \bfk_{\bR}))
	& \simeq
	\RG(M \times (A,+\infty);F_1 \otimes (\cL_2^{\otimes -1} \boxtimes \bfk_{\bR})) \\
	& \simeq
	\RG(M \times (A,+\infty);(\cL_1 \otimes \cL_2^{\otimes -1}) \boxtimes \bfk_{\bR}) \\
	& \simeq
	\RG(M;\cL_1 \otimes \cL_2^{\otimes -1}).
	\end{split}
	\end{equation}
	The second assertion follows from \pref{prp:morT}.
\end{proof}

\begin{remark}
	In the special case where both $L_1$ and $L_2$ are the zero-section $T^*_MM$ of $T^*M$, \eqref{eq:lagmorT} was already obtained by Guillermou--Schapira~\cite{GS14}.
	The outline of the proof is as follows.
	The simple sheaf quantization associated with the zero-section $T^*_MM$ and a rank $1$ locally constant sheaf $\cL \in \Module(\bfk_M)$ is isomorphic to $\cL \boxtimes \bfk_{[0,+\infty)}$.
	In \cite{GS14}, Guillermou and Schapira proved that the functor
	\begin{equation}
	\Db(M) \lto \cT(M), \
	F \longmapsto F \boxtimes \bfk_{[0,+\infty)}
	\end{equation}
	is fully faithful (see \cite[Cor.~5.8]{GS14}).
	We thus obtain
	\begin{equation}\label{eq:special}
	\begin{split}
	\Hom_{\cT(M)}(\cL_2 \boxtimes \bfk_{[0,+\infty)},\cL_1 \boxtimes \bfk_{[0,+\infty)}[k])
	& \simeq
	\Hom_{\Db(M)}(\cL_2,\cL_1[k]) \\
	& \simeq
	H^k(M;\cL_1 \otimes \cL_2^{\otimes -1})
	\end{split}
	\end{equation}
	for rank $1$ locally constant sheaves $\cL_1, \cL_2 \in \Module(\bfk_M)$.
	
	Moreover, we can prove \eqref{eq:lagmorT} for general compact exact Lagrangians $L_1$ and $L_2$ using \eqref{eq:special} and \pref{prp:isomT} below.
	The following was pointed out to the author by T.~Kuwagaki.
	
	\begin{proposition}\label{prp:isomT}
		Let $L$ be a compact connected exact Lagrangian submanifold of $T^*M$.
		Let $\cL \in \Module(\bfk_M)$ be a locally constant sheaf of rank $1$ and let $F \in \Db_{\wh{L} \cup T^*_{M \times \bR}(M \times \bR),+}(M \times \bR)$ be the simple sheaf quantization associated with $L$ satisfying $F_+ \simeq \cL$.
		Then 
		\begin{equation}
		F \simeq \cL \boxtimes \bfk_{[0,+\infty)}
		\quad \text{in $\cT(M)$}.
		\end{equation}
	\end{proposition}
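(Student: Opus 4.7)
My plan is to exhibit, for $c$ sufficiently large, a morphism $u_c \colon \cL \boxtimes \bfk_{[0, +\infty)} \to {T_c}_*F$ in $\Db(M \times \bR)$ whose cone has compact support in the $\bR$-direction. Such a cone, lying in the triangulated subcategory ${}^\perp \Db_{\{\tau \le 0\}}(M \times \bR)$, is automatically a torsion object by the compact-support criterion stated immediately after the definition of $\cN_{\mathrm{tor}}$. Consequently $u_c$ becomes an isomorphism in $\cT(M) = \cD(M)/\cN_{\mathrm{tor}}$; combined with the isomorphism $F \simeq {T_c}_*F$ in $\cT(M)$ induced by the canonical morphism $\tau_{0, c}(F)$ (which represents the identity of $F$ under the inductive-limit description of \pref{prp:morT}), this yields the desired $F \simeq \cL \boxtimes \bfk_{[0, +\infty)}$ in $\cT(M)$.

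Fix $A > 0$ with $\wh{L} \subset T^*(M \times (-A, A))$ and take $c > 2A$. Then $F$ is supported in $M \times [-A, +\infty)$ and ${T_c}_*F$ in $M \times [c-A, +\infty)$, and on $M \times (c+A, +\infty)$ both objects are canonically identified with $\cL \boxtimes \bfk$. Since $\cL \boxtimes \bfk_{[0, +\infty)}$ is itself the simple sheaf quantization of the zero-section $T^*_MM$ (with primitive $0$) and local system $\cL$, unique by \pref{thm:Guiquan}, \pref{prp:cohlagr} applied with $L_1 = L$, $L_2 = T^*_MM$, and common local system $\cL$ yields
\begin{equation*}
\Hom_{\Db(M \times \bR)}(\cL \boxtimes \bfk_{[0, +\infty)}, {T_c}_*F) \simeq H^0(M; \bfk_M) \simeq \bfk.
\end{equation*}
I normalize $u_c$ to be the generator of this one-dimensional space whose restriction to $M \times (c+A, +\infty)$ coincides with the identity morphism $\cL \boxtimes \bfk \to \cL \boxtimes \bfk$.

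With such a choice, the cone $C_c := \mathrm{cone}(u_c)$ is supported in $\Supp(\cL \boxtimes \bfk_{[0, +\infty)}) \cup \Supp({T_c}_*F) \subset M \times [0, +\infty)$ and vanishes on $M \times (c+A, +\infty)$ (being the cone of an isomorphism there), so $\Supp(C_c) \subset M \times [0, c+A]$ is compact. Moreover $C_c$ lies in ${}^\perp \Db_{\{\tau \le 0\}}$ as the cone of a morphism between two objects of that triangulated subcategory: both $\cL \boxtimes \bfk_{[0, +\infty)}$ and ${T_c}_*F$ are there by \pref{lem:orthogonal}, the latter because $T_c$ preserves $\Db_{\{\tau \le 0\}}$ and hence ${T_c}_*$ preserves its left orthogonal. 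Thus $C_c \in \cN_{\mathrm{tor}}$, and $u_c$ is an isomorphism in $\cT(M)$ as required.

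The main obstacle is verifying the existence of a $u_c$ with the prescribed restriction, equivalently that the restriction map
\begin{equation*}
\Hom_{\Db(M \times \bR)}(\cL \boxtimes \bfk_{[0, +\infty)}, {T_c}_*F) \to \Hom_{\Db(M \times (c+A, +\infty))}(\cL \boxtimes \bfk, \cL \boxtimes \bfk) \simeq \bfk
\end{equation*}
is nonzero (hence an isomorphism of one-dimensional $\bfk$-spaces). I plan to establish this by retracing the proof of \pref{prp:cohlagr}: the reduction from $\cL \boxtimes \bfk_{[0, +\infty)}$ to $\cL \boxtimes \bfk_{\bR}$ uses that $\Supp({T_c}_*F) \cap (M \times (-\infty, 0)) = \emptyset$; the Hom-tensor manipulation uses that $\cL$ has rank $1$; and the microlocal Morse lemma is applied with $\varphi = -t$, valid because the microsupport of the relevant object lies in $\{\tau \ge 0\}$ so that $-dt$ never meets it. The key observation is that this last step may equally be used to contract the domain of integration to $M \times (c+A, +\infty)$ rather than just $M \times (A, +\infty)$, making the composite isomorphism literally coincide with the restriction map.
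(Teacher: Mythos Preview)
Your argument is correct, but it takes a considerably longer path than the paper's. The paper avoids computing any Hom spaces and instead exhibits an explicit intermediate object: it uses the canonical restriction morphism $F \to \cL \boxtimes \bfk_{[A+1,+\infty)}$ (which is just the unit $F \to i_*i^{-1}F$ for the closed inclusion $i$ of $M \times [A+1,+\infty)$, using $F|_{M\times(A,+\infty)} \simeq \cL \boxtimes \bfk_{(A,+\infty)}$), observes that its cone is supported in $M \times [-A,A+1]$ and hence torsion, and does the same with the obvious morphism $\cL \boxtimes \bfk_{[0,+\infty)} \to \cL \boxtimes \bfk_{[A+1,+\infty)}$. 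No translation ${T_c}_*$, no appeal to \pref{prp:cohlagr}, and no need to check that a restriction map is nonzero.

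Your route works, but the ``main obstacle'' you identify---tracing through the proof of \pref{prp:cohlagr} to see that the isomorphism with $H^0(M;\bfk)$ is literally the restriction map---is real additional labor that the paper's approach sidesteps entirely. One further remark: your proof invokes \pref{prp:cohlagr}, while in the paper \pref{prp:isomT} is offered as part of an \emph{alternative} route to \pref{prp:cohlagr}; the paper's self-contained two-line argument keeps that option open, whereas yours does not.
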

	
	\begin{proof}
		By the compactness of $L$, we can take a sufficiently large $A \in \bR_{>0}$ such that $\wh{L} \subset T^*(M \times (-A,A))$.
		Since $F|_{M \times (A,+\infty)} \simeq \cL \boxtimes \bfk_{(A,+\infty)}$, there exists a canonical morphism
		\begin{equation}\label{eq:morconst}
		F \lto \cL \boxtimes \bfk_{[A+1,+\infty)}.
		\end{equation}
		The cone of this morphism is supported in $M \times [-A,A+1]$ and hence a torsion object.
		Therefore the morphism \eqref{eq:morconst} is an isomorphism in $\cT(M)$.
		A similar argument shows that the morphism $\cL \boxtimes \bfk_{[0,+\infty)} \to \cL \boxtimes \bfk_{[A+1,+\infty)}$ is an isomorphism in $\cT(M)$.
	\end{proof}
\end{remark}

By \pref{thm:nondisp} and \pref{prp:cohlagr} we obtain the following:

\begin{corollary}\label{cor:nondislagr}
	In the same notation as in \pref{prp:cohlagr}, assume that $F_i$ is the canonical sheaf quantization of $\wh{L_i}$, that is, $\cL_i \simeq \bfk_M$ for $i=1,2$.
	Then 
	\begin{equation}
	\Hom_{\cT(M)}(F_2,F_1[k])
	\simeq
	H^k(M;\bfk)
	\quad \text{for any $k \in \bZ$}.
	\end{equation}
	In particular, $L_1$ and $L_2$ are mutually non-displaceable.
\end{corollary}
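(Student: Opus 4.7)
The plan is to deduce both assertions directly from already-proved results in the paper, with essentially no new work.

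For the isomorphism, I would simply specialize Proposition~\ref{prp:cohlagr} to the case $\cL_1 \simeq \cL_2 \simeq \bfk_M$. Then $\cL_1 \otimes \cL_2^{\otimes -1} \simeq \bfk_M$, so the formula \eqref{eq:lagmorT} immediately gives
\begin{equation*}
\Hom_{\cT(M)}(F_2,F_1[k]) \simeq H^k(M;\bfk_M) = H^k(M;\bfk)
\end{equation*}
for every $k \in \bZ$. No further computation is required.

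For non-displaceability, the strategy is to invoke Tamarkin's non-displaceability criterion (\pref{thm:nondisp}) with $A = L_1$, $B = L_2$, and the given $F_1, F_2$. The two things I need to check are: (a) $F_i \in \cD_{L_i}(M)$, and (b) $\Hom_{\cT(M)}(F_2,F_1) \neq 0$. Point (a) follows from the definition: by \pref{thm:Guiquan} the simple sheaf quantization $F_i$ satisfies $\MS(F_i) \subset \wh{L_i} \cup T^*_{M \times \bR}(M \times \bR)$, so $\MS(F_i) \cap \Omega_+ \subset \wh{L_i} \subset \rho^{-1}(L_i)$, placing $F_i$ in $\Db_{\rho^{-1}(L_i)}(M \times \bR; \Omega_+) = \cD_{L_i}(M)$. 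Point (b) follows from the isomorphism just established: since $M$ is compact, non-empty, and connected, $H^0(M;\bfk) \simeq \bfk \neq 0$, so $\Hom_{\cT(M)}(F_2,F_1) \neq 0$.

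There is really no obstacle here — the corollary is essentially a formal combination of \pref{prp:cohlagr} with \pref{thm:nondisp}. The only conceptual step worth flagging is the compatibility check (a), i.e., noticing that a simple sheaf quantization of $\wh{L_i}$, which a priori has microsupport on the conification, automatically defines an object of $\cD_{L_i}(M)$ via the projection $\rho$. All the real analytic content was absorbed earlier: the construction of sheaf quantizations (\pref{thm:Guiquan}), the computation of Hom spaces in $\cT(M)$ (\pref{prp:cohlagr}, which itself rests on \pref{prp:morT} and the microlocal Morse lemma), and Tamarkin's separation theorem underlying \pref{thm:nondisp}.
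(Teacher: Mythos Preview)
Your proposal is correct and matches the paper's approach exactly: the paper simply states the corollary as following from \pref{thm:nondisp} and \pref{prp:cohlagr}, and your argument is just a careful unpacking of that one-line deduction. The only addition you make is the explicit verification that $F_i \in \cD_{L_i}(M)$ and that $H^0(M;\bfk) \neq 0$, both of which are immediate and left implicit in the paper.
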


\subsection{Morse--Bott inequality for $\cHom^\star$}\label{subsec:M-B for Hom}

In this subsection, we shall apply the Morse--Bott inequality for sheaves to $\cHom^\star(F_2,F_1)$.
For this purpose, we estimate $\MS(\cHom^\star(F_2,F_1))$.
Recall the isomorphism
\begin{equation}
\cHom^\star(F_2,F_1) \simeq R{s}_* \cRHom(\tilde{q}_2^{-1}i^{-1}F_2,\tilde{q}_1^!F_1),
\end{equation}
where $\tilde{q}_1, \tilde{q}_2 \colon M \times \bR \times \bR \to M \times \bR$ are the projections, $s \colon M \times \bR \times \bR \to M \times \bR$ is the addition map, and $i \colon M \times \bR \to M \times \bR$ is the involution $(x,t) \mapsto (x,-t)$.
Since $\tilde{q}_2$ and $\tilde{q}_1$ are submersions, by \pref{thm:operations}(ii) we have inclusions
\begin{equation}
\begin{split}
\mathring{\MS}(\tilde{q}_2^{-1}i^{-1}F_2)
& \subset \tilde{q}_{2d} \tilde{q}_{2\pi}^{-1}\mathring{\MS}(i^{-1}F_2) \\
& =
\left\{(x,t_1,t_2;\tau_2 \xi_2,0,-\tau_2)
\; \middle| \;
\begin{split}
& \tau_2>0, (x;\xi_2) \in L_2, \\
& t_1 \in \bR, t_2=f_2(x;\xi_2)
\end{split}
\right\}
\end{split}
\end{equation}
and
\begin{equation}
\begin{split}
\mathring{\MS}(\tilde{q}_1^!F_1)
& \subset \tilde{q}_{1d} \tilde{q}_{1\pi}^{-1}\mathring{\MS}(F_1) \\
& =
\left\{(x,t_1,t_2;\tau_1 \xi_1,\tau_1,0)
\; \middle| \;
\begin{split}
& \tau_1>0, (x;\xi_1) \in L_1, \\
& t_1=-f_1(x;\xi_1), t_2 \in \bR
\end{split}
\right\}.
\end{split}
\end{equation}
Hence $\mathring{\MS}(\tilde{q}_2^{-1}i^{-1}F_2) \cap \mathring{\MS}(\tilde{q}_1^!F_1) =\emptyset$, and by \pref{prp:SSHom} we obtain
\begin{equation}\label{eq:estSSH}
\begin{split}
& \mathring{\MS}(\cRHom(\tilde{q}_2^{-1}i^{-1}F_2,\tilde{q}_1^!F_1))
\subset \mathring{\MS}(\tilde{q}_2^{-1}i^{-1}F_2)^a + \mathring{\MS}(\tilde{q}_1^!F_1) \\
& =
\left\{
(x,t_1,t_2;\tau_1\xi_1-\tau_2\xi_2,\tau_1,\tau_2)
\; \middle| \;
\begin{split}
& \tau_1, \tau_2>0, \\
& (x;\xi_1) \in L_1, (x;\xi_2) \in L_2, \\
& t_1=-f_1(x;\xi_1), t_2=f_2(x;\xi_2)
\end{split}
\right\} \\
& =:
\Lambda_{M \times \bR \times \bR}.
\end{split}
\end{equation}

\begin{lemma}\label{lem:conicclosure}
	One has
	\begin{equation}
	\begin{split}
	&
	v_d^{-1}\left(\overline{v_\pi(\Lambda_{M \times \bR \times \bR} \cup T^*_{M \times \bR \times \bR}(M \times \bR \times \bR))}\right) \\
	= 	\ &
	v_d^{-1}v_\pi(\Lambda_{M \times \bR \times \bR} \cup T^*_{M \times \bR \times \bR}(M \times \bR \times \bR)) \\
	= \ &
	{s}_\pi {s}_d^{-1}(\Lambda_{M \times \bR \times \bR} \cup T^*_{M \times \bR \times \bR}(M \times \bR \times \bR)).
	\end{split}
	\end{equation}
	In other words,
	\begin{equation}
	\begin{split}
	& s_\sharp(\Lambda_{M \times \bR \times \bR} \cup T^*_{M \times \bR \times \bR}(M \times \bR \times \bR)) \\
	= \ &
	{s}_\pi {s}_d^{-1}(\Lambda_{M \times \bR \times \bR} \cup T^*_{M \times \bR \times \bR}(M \times \bR \times \bR)).
	\end{split}
	\end{equation}
\end{lemma}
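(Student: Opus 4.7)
The plan is to split the statement into the two claimed equalities and handle them separately. The second equality, namely that $v_d^{-1} v_\pi(\Lambda_{M \times \bR \times \bR} \cup T^*_{M \times \bR \times \bR}(M \times \bR \times \bR))$ equals $s_\pi s_d^{-1}(\Lambda_{M \times \bR \times \bR} \cup T^*_{M \times \bR \times \bR}(M \times \bR \times \bR))$, is immediate from the general set-theoretic identity $u_\pi(u_d^{-1}(A))=v_d^{-1}(v_\pi(A))$ noted just after diagram~\eqref{diag:nonproper}, specialized to $u=s$.

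For the first equality, the inclusion $\supseteq$ is trivial since $v_\pi(S) \subset \overline{v_\pi(S)}$ (writing $S := \Lambda_{M \times \bR \times \bR} \cup T^*_{M \times \bR \times \bR}(M \times \bR \times \bR)$). For the nontrivial inclusion, I would take any point $p = (x,t;\eta,\tau)$ in $v_d^{-1}(\overline{v_\pi(S)})$; by definition of $v_d$, the corresponding point $\tilde p = (x,t;\eta,\tau,\tau)$ lies in $\overline{v_\pi(S)}$, so I can choose a sequence of points $p_n \in v_\pi(S)$ converging to $\tilde p$. The key observation to exploit is that the two cotangent coordinates of the limit coincide.

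I would then split into two cases according to which piece of $S$ infinitely many $p_n$'s come from. If infinitely many lie in $v_\pi(T^*_{M \times \bR \times \bR}(M \times \bR \times \bR))$, then $\tau = 0$ and $\eta = 0$, and $\tilde p$ lies in that zero-section piece of $v_\pi(S)$. If instead infinitely many $p_n$ lie in $v_\pi(\Lambda_{M \times \bR \times \bR})$, I can write $p_n = (x_n, f_2(x_n;\xi_{2,n}) - f_1(x_n;\xi_{1,n}); \tau_{1,n}\xi_{1,n} - \tau_{2,n}\xi_{2,n}, \tau_{1,n}, \tau_{2,n})$ with $\tau_{1,n}, \tau_{2,n} > 0$ and $(x_n;\xi_{i,n}) \in L_i$. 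Compactness of $L_1$ and $L_2$ lets me extract subsequences so that $(x_n;\xi_{i,n}) \to (x;\xi_i) \in L_i$; continuity of $f_1,f_2$ then forces $t = f_2(x;\xi_2) - f_1(x;\xi_1)$ and $\eta = \tau(\xi_1 - \xi_2)$, with common limit $\tau \ge 0$ of $\tau_{1,n},\tau_{2,n}$. If $\tau > 0$, the limit lies in $v_\pi(\Lambda_{M \times \bR \times \bR})$; if $\tau = 0$ then $\eta = 0$ and the limit lies in $v_\pi(T^*_{M \times \bR \times \bR}(M \times \bR \times \bR))$. Either way $\tilde p \in v_\pi(S)$, hence $p \in v_d^{-1}(v_\pi(S))$.

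The argument is essentially bookkeeping; the only point that requires a moment's care, and the main potential obstacle, is that the closure $\overline{v_\pi(\Lambda_{M \times \bR \times \bR})}$ really does contain extra limit points (namely those where exactly one of $\tau_1,\tau_2$ degenerates to $0$), but these necessarily have $\tau_1 \neq \tau_2$ and are therefore killed by the pullback $v_d^{-1}$. This is why the equality holds even though taking closure in the intermediate space genuinely enlarges $v_\pi(S)$.
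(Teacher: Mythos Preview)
Your argument is correct and, in one respect, more careful than the paper's. The paper argues that $v_\pi(S)$ itself is closed: it writes down the compactness bound $|\xi|\le C(|\tau_1|+|\tau_2|)$ and concludes that any limit point of $\Lambda'$ with $\tau_1=\tau_2=0$ has $\xi=0$, hence lands in the zero-section piece. But as you observe in your final paragraph, $\overline{\Lambda'}$ genuinely contains boundary points with exactly one of $\tau_1,\tau_2$ equal to zero (take $\tau_1=1$, $\tau_2=1/n$ over a fixed $x_0\in\pi(L_1)\cap\pi(L_2)$), and these lie neither in $\Lambda'$ nor in the zero-section piece. So $v_\pi(S)$ is not closed in general, and the paper's stated route needs the extra observation you supply: such points have $\tau_1\neq\tau_2$ and are discarded by $v_d^{-1}$.

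Your approach avoids this issue entirely by working directly with $v_d^{-1}(\overline{v_\pi(S)})$, so that from the outset you only need to analyze limits with $\tau_1=\tau_2=\tau$. The compactness step (extracting convergent subsequences of $(x_n;\xi_{i,n})\in L_i$) and the dichotomy $\tau>0$ versus $\tau=0$ are then exactly what is needed, and both cases are handled cleanly. The underlying idea is the same as the paper's, but your packaging is the correct one.
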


See \pref{subsec:nonproper} for the notation $v_\pi, v_d$, and $s_\sharp$ associated with the constant linear map $s \colon M \times \bR \times \bR \to M \times \bR$.

\begin{proof}
	Define $\Lambda' \subset T^*M \times \bR \times (\bR \times \bR)$ by
	\begin{equation}
	\Lambda'\coloneqq 
	\left\{
	((x;\tau_1\xi_1-\tau_2\xi_2),(t;\tau_1,\tau_2))
	\; \middle| \;
	\begin{aligned}
	& \tau_1, \tau_2>0, (x;\xi_1) \in L_1, (x;\xi_2) \in L_2, \\
	& t=f_2(x;\xi_2)-f_1(x;\xi_1)
	\end{aligned}
	\right\}.
	\end{equation}
	Then the set $v_\pi(\Lambda_{M \times \bR \times \bR} \cup T^*_{M \times \bR \times \bR}(M \times \bR \times \bR))$ is equal to $\Lambda' \cup (T^*_MM \times \bR \times \{(0,0)\}) \subset T^*M \times \bR \times (\bR \times \bR)$.
	It suffices to check that $\Lambda' \cup (T^*_MM \times \bR \times \{(0,0)\})$ is equal to its closure.
	By the compactness of $L_1$ and $L_2$, there exists $C \in \bR_{>0}$ such that $|\xi| \le C(|\tau_1|+|\tau_2|)$ for any $((x;\xi),(t;\tau_1,\tau_2)) \in \Lambda'$.
	Therefore the same inequality holds on the closure $\overline{\Lambda'}$ of $\Lambda'$.
	Hence if $((x;\xi), (t;\tau_1,\tau_2)) \in \overline{\Lambda'}$ and $\tau_1=\tau_2=0$ then $\xi=0$, which proves the equality.
\end{proof}

By \pref{prp:nonproper}, \pref{lem:conicclosure}, and \eqref{eq:estSSH}, $\mathring{\MS}(\cHom^\star(F_2,F_1))$ is estimated as
\begin{equation}\label{eq:SSestHomstar}
\begin{aligned}
\mathring{\MS}(\cHom^\star(F_2,F_1))
&\subset
s_\sharp(\Lambda_{M \times \bR \times \bR} \cup T^*_{M \times \bR \times \bR}(M \times \bR \times \bR)) \cap \rT(M \times \bR) \\
& =
{s}_\pi {s}_d^{-1}(\Lambda_{M \times \bR \times \bR} \cup T^*_{M \times \bR \times \bR}(M \times \bR \times \bR)) \cap \rT(M \times \bR) \\
& \subset
\left\{
(x,t;\tau(\xi_1-\xi_2),\tau) \; \middle| \;
\begin{aligned}
& \tau >0, \\
& (x;\xi_1) \in L_1, (x;\xi_2) \in L_2, \\
& t=f_2(x;\xi_2)-f_1(x;\xi_1)
\end{aligned}
\right\} \\
& =:
\Lambda_{M \times \bR}.
\end{aligned}
\end{equation}
Let $t \colon M \to \bR$ be the function $(x,t) \mapsto t$.
Then, by \eqref{eq:SSestHomstar}, we obtain
\begin{equation}
\Gamma_{dt} \cap \MS(\cHom^\star(F_2,F_1))  
\subset
\left\{
(x,t;0,1)
\; \middle| \;
\begin{aligned}
& \exists\, (x;\xi) \in L_1 \cap L_2, \\
& t=f_2(x;\xi)-f_1(x;\xi)
\end{aligned}
\right\}.
\end{equation}
By this inclusion, we find that $\RG_{M \times [c,+\infty)}(\cHom^\star(F_2,F_1))|_{M \times \{ c \}} \simeq 0$ if $c \not\in \{f_2(p)-f_1(p) \mid p \in L_1 \cap L_2 \}$.

\begin{proposition}\label{prp:Homstarineq}
	Let $a,b \in \bR$ with $a<b$ or $a \in \bR, b=+\infty$.
	Assume that 
	\begin{enumerate}
		\renewcommand{\labelenumi}{$\mathrm{(\arabic{enumi})}$}
		\item the point $a \in \bR$ is not an accumulation point of $\{ f_2(p)-f_1(p) \mid p \in L_1 \cap L_2 \} \subset \bR$,
		\item the set $\{ f_2(p)-f_1(p) \mid p \in L_1 \cap L_2 \} \cap [a,b)$ is finite,
		\item the object $\RG( M \times \{c\};\RG_{M \times [c,+\infty)}(\cHom^\star(F_2,F_1))|_{M \times \{ c \}} )$ has finite-dimensional cohomology for any $a \le c<b$.
	\end{enumerate}
	Then 
	\begin{equation}\label{eq:Homstarineq}
	\begin{split}
	& \sum_{a \le c<b} \dim H^k \RG \left( M \times \{c\};\RG_{M \times [c,+\infty)}(\cHom^\star(F_2,F_1))|_{M \times \{ c \}} \right)  \\
	& \qquad \ge
	\dim H^k \RG_{M \times [a,b)}(M \times (-\infty,b);\cHom^\star(F_2,F_1))
	\end{split}
	\end{equation}
	for any $k \in \bZ$.
\end{proposition}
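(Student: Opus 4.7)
The plan is to apply the Morse-Bott inequality for sheaves (\pref{thm:morsebottineq}) to a truncation of $H := \cHom^\star(F_2, F_1)$ with respect to the function $\varphi \colon M \times \bR \to \bR$, $(x, t) \mapsto t$. By the microsupport estimate \eqref{eq:SSestHomstar}, the intersection $\Gamma_{d\varphi} \cap \MS(H)$ lies over $t$-values of the form $f_2(p) - f_1(p)$ with $p \in L_1 \cap L_2$, so these are the only candidate critical values of $\varphi$ on $H$.

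Concretely, I would work on the open manifold $X := M \times (-\infty, b)$ and apply \pref{thm:morsebottineq} to $G := \RG_{M \times [a, b)}(H)|_{X}$ with the function $\varphi|_X$. Hypothesis (1) of \pref{thm:morsebottineq} holds since $\Supp(G) \subset M \times [a, b)$ and $M$ is compact, so $\Supp(G) \cap \{t \le t_0\}$ lies in the compact set $M \times [a, t_0]$ for every $t_0 < b$. For hypothesis (2), I would bound $\MS(G)$ using the distinguished triangle $G \to H|_X \to Rj_*(H|_{M \times (-\infty, a)}) \overset{+1}{\to}$, where $j \colon M \times (-\infty, a) \hookrightarrow X$ is the open inclusion; then assumptions (1) and (2) together ensure that $\varphi(\pi(\MS(G) \cap \Gamma_{d\varphi|_X}))$ is finite. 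Hypothesis (3) reduces to assumption (3) of the proposition via the identification of local objects below.

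Two identifications then complete the argument. First, by definition of local cohomology,
\[ \RG(X; G) \simeq \RG_{M \times [a, b)}(M \times (-\infty, b); H), \]
which gives the right-hand side of \eqref{eq:Homstarineq}. Second, for each critical value $c \in [a, b)$, since $[c, b)$ and $[c, +\infty)$ agree in a neighborhood of $M \times \{c\}$ and the truncation by $\bfk_{M \times [a, b)}$ does not alter the germ of $H$ there,
\[ \RG_{\{\varphi|_X \ge c\}}(G)|_{M \times \{c\}} \simeq \RG_{M \times [c, +\infty)}(H)|_{M \times \{c\}}, \]
producing the corresponding term of the left-hand side of \eqref{eq:Homstarineq}. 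Combined with \pref{thm:morsebottineq} applied to $G$, these identifications yield \eqref{eq:Homstarineq}.

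The main obstacle I anticipate is the microsupport bookkeeping at the cut-off point $t = a$: I must verify that the truncation does not introduce spurious critical values of $\varphi|_X$ on $G$ beyond those inherited from $H$ on $[a, b)$. The non-accumulation condition (assumption (1)) is precisely what prevents critical values of $H$ from clustering near $a$ via the estimate $\MS(G) \subset \MS(\bfk_{M \times [a, b)})^a + \MS(H|_X)$, so that the critical-value set of $\varphi|_X$ on $G$ remains the finite set described by assumption (2).
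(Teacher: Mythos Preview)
Your approach is essentially the same as the paper's: apply the Morse--Bott inequality for sheaves to a truncation of $H=\cHom^\star(F_2,F_1)$ with respect to $t$, then match the global term with the right-hand side and the local terms $W_c$ with the left-hand side. The difference is only in how the cutoff at $t=a$ is handled. You truncate at $a$ itself and propose to control $\MS(G)$ either via the triangle $G\to H|_X\to Rj_*(H|_{M\times(-\infty,a)})$ or via the estimate $\MS(G)\subset\MS(\bfk_{M\times[a,b)})^a+\MS(H|_X)$. The paper instead uses assumption~(1) to pick $a'<a$ with no values of $f_2-f_1$ in $[a',a)$, truncates at $a'$, applies \pref{prp:SSHom} there (where its hypothesis is guaranteed), and then shows via the microlocal Morse lemma that both sides of the inequality are unchanged when $a'$ is replaced by $a$.

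One point to tighten in your write-up: the estimate $\MS(G)\subset\MS(\bfk_{M\times[a,b)})^a+\MS(H|_X)$ from \pref{prp:SSHom} needs $\MS(\bfk_{M\times[a,b)})\cap\MS(H|_X)\subset T^*_XX$, and this can fail when $a$ itself lies in $\{f_2(p)-f_1(p)\}$ --- assumption~(1) forbids accumulation at $a$ but not membership. This is exactly why the paper shifts to $a'$. Your triangle argument, however, does the job without this hypothesis: since $\Supp(Rj_*j^{-1}H)\subset M\times(-\infty,a]$ and $\Supp(G)\subset M\times[a,b)$, the triangle inequality for microsupports gives $\varphi(\pi(\MS(G)\cap\Gamma_{dt}))\subset\bigl(\{f_2(p)-f_1(p)\}\cap[a,b)\bigr)\cup\{a\}$, which is finite by assumption~(2) alone. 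And even if $a$ appears as a possibly spurious critical value of $G$, your identification $\RG_{\{\varphi|_X\ge a\}}(G)|_{M\times\{a\}}\simeq\RG_{M\times[a,+\infty)}(H)|_{M\times\{a\}}$ shows that the corresponding $W_a$ is exactly the $c=a$ term already present on the left-hand side of \eqref{eq:Homstarineq}, so no harm is done. With that clarification your route is valid and arguably slightly more direct than the paper's.
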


\begin{proof}
	We set $H\coloneqq \cHom^\star(F_2,F_1)$.
	By assumption~(1) we can take $a'<a$ such that
	\begin{equation}\label{eq:assumption}
	f_1(p)-f_2(p) \not\in [a',a) \quad \text{for any $p \in L_1 \cap L_2$.}
	\end{equation}
	By \eqref{eq:SSestHomstar} and \eqref{eq:assumption} we have $\mathring{\MS}(H) \cap \mathring{\MS}(\bfk_{M \times [a',+\infty)})=\emptyset$.
	Hence by \pref{prp:SSHom} we obtain
	\begin{equation}\label{eq:SSestprecise}
	\begin{split}
	\mathring{\MS}(\RG_{M \times [a',+\infty)}(H))
	& =
	\mathring{\MS}(\cRHom(\bfk_{M \times [a',+\infty)},H)) \\
	& \subset
	\Lambda_{M \times \bR} \cap \pi^{-1}(\{t>a' \})+\{(x,a';0,-\tau') \mid \tau' > 0 \}.
	\end{split}
	\end{equation}
	Set $H'\coloneqq \RG_{M \times [a',+\infty)}(H)|_{M \times (-\infty,b)} \in \Db(M \times (-\infty,b))$ and let $t \colon M \times (-\infty,b) \to \bR$ be the function $(x,t) \mapsto t$.
	We shall apply the Morse--Bott inequality for sheaves (\pref{thm:morsebottineq}) to $H'$ and $t \colon M \times (-\infty,b) \to \bR$.
	Combining \eqref{eq:SSestHomstar} with \eqref{eq:SSestprecise}, we get
	\begin{equation}
	\Gamma_{dt} \cap \MS(H')
	\subset
	\{
	(x,t;0,1) \mid
	\exists\, p \in L_1 \cap L_2,
	x=\pi(p),
	a'<t=f_2(p)-f_1(p)<b
	\}.
	\end{equation}
	Hence the conditions in \pref{thm:morsebottineq} are satisfied by \eqref{eq:assumption} and assumptions~(2) and (3).
	Hence we have the inequality
	\begin{equation}\label{eq:ineqpert}
	\begin{split}
	& \sum_{a' <c<b} \dim H^k \RG \left( M \times \{c\};\RG_{M \times [c,+\infty)}(H)|_{M \times \{ c \}} \right)  \\
	& \qquad \ge
	\dim H^k \RG_{M \times [a',b)}(M \times (-\infty,b);H)
	\end{split}
	\end{equation}
	for any $k \in \bZ$.
	Moreover, by \eqref{eq:SSestHomstar}, \eqref{eq:assumption}, and \eqref{eq:SSestprecise}, we get $\Gamma_{dt} \cap \MS(H') \cap \pi^{-1}(M \times [a',a))=\emptyset$.
	Applying the microlocal Morse lemma (\pref{prp:microlocalmorse}), we have
	\begin{equation}
	\begin{split}
	\RG_{M \times [a',a)}(M \times (-\infty,a);H)
	& \simeq
	\RG(M \times (-\infty,a);H') \\
	& \simeq
	\RG((-\infty,a');H') \simeq 0.
	\end{split}
	\end{equation}
	Thus we get $\RG_{M \times [a,b)}(M \times (-\infty,b);H) \simeq \RG_{M \times [a',b)}(M \times (-\infty,b);H)$.
	On the other hand, by \eqref{eq:assumption}, $\RG_{M \times [c,+\infty)}(H)|_{M \times \{ c \}} \simeq 0$ for $c \in [a',a)$ and the left-hand side of \eqref{eq:ineqpert} is equal to that of \eqref{eq:Homstarineq}.
	This completes the proof.
\end{proof}

\begin{remark}
	C.~Viterbo announced that he had found some relation between the section of $\cHom^\star(F_2,F_1)$ on $M \times (-\infty,\lambda)$ and the Floer cohomology complex $\mathrm{CF}_{<\lambda}(L_2,L_1)$ filtered by $\{p \in L_1 \cap L_2 \mid f_2(p)-f_1(p)<\lambda \}$.
	Inspired by his work, in \pref{prp:Homstarineq} we consider not only the section on $M \times \bR$ but also that on $M \times (-\infty,b)$ .
\end{remark}

\subsection{Microlocalization of $\cHom^\star$}\label{subsec:microlocalization-of-chomstar}

In this subsection we describe $\RG(M \times \{c\}; \RG_{M \times [c,+\infty)}(\cHom^\star(F_2,F_1))|_{M \times \{c\}})$ in terms of the functor $\mu hom$.
Applying ${T_{c}}_*$ to $F_2$, we may assume $c=0$.
The following lemma follows from \pref{prp:microlocalcoh}.

\begin{lemma}\label{lem:microsection}
	Set $V_+\coloneqq \{(x,0;0,\tau) \mid \tau >0\} \subset T^*_{M \times \{0\}}(M \times \bR)$.
	Then 
	\begin{equation}
	\begin{split}
	& \RG(M \times \{0\};\RG_{M \times [0,+\infty)}(\cHom^\star(F_2,F_1))|_{M \times \{0\}}) \\
	& \qquad \simeq
	\RG(V_+;\mu_{M \times \{0\}}(\cHom^\star(F_2,F_1))|_{V_+}).
	\end{split}
	\end{equation}
\end{lemma}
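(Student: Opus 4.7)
The plan is to deduce this lemma as a direct application of \pref{prp:microlocalcoh} with a carefully chosen function. Specifically, I would take $X := M \times \bR$, $F := \cHom^\star(F_2,F_1)$, and $\varphi \colon M \times \bR \to \bR, (x,t) \mapsto t$. The hypothesis $d\varphi(x) \neq 0$ is automatic since $d\varphi = dt$ is nowhere vanishing on $\varphi^{-1}(0) = M \times \{0\}$. Under this choice, the submanifold $\varphi^{-1}(0)$ in \pref{prp:microlocalcoh} is precisely $M \times \{0\}$, and the set $\{\varphi \ge 0\}$ is $M \times [0,+\infty)$.

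Next I would identify the conormal half-bundle appearing in \pref{prp:microlocalcoh}. By definition, it is
\begin{equation}
T^{*+}_{M \times \{0\}}(M \times \bR)
= \{ ((x,0); \lambda \, d\varphi(x,0)) \mid x \in M, \lambda > 0 \}
= \{ (x,0;0,\tau) \mid x \in M, \tau > 0 \},
\end{equation}
which coincides with the subset $V_+$ defined in the statement of the lemma.

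With these identifications, the ``in particular'' part of \pref{prp:microlocalcoh} yields
\begin{equation}
\RG(M \times \{0\}; \RG_{M \times [0,+\infty)}(\cHom^\star(F_2,F_1))|_{M \times \{0\}})
\simeq
\RG(V_+; \mu_{M \times \{0\}}(\cHom^\star(F_2,F_1))|_{V_+}),
\end{equation}
which is exactly the desired isomorphism.

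There is essentially no obstacle here: the lemma is a direct specialization of the proposition, and the only thing to check is the translation of the notation. One minor bookkeeping point worth being careful about is the sign and normalization conventions, namely that $\lambda \, d\varphi$ with $\lambda > 0$ corresponds precisely to covectors of the form $(0,\tau)$ with $\tau > 0$ in the chosen homogeneous coordinates on $T^*(M \times \bR)$; but this is immediate from $\varphi = t$.
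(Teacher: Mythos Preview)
Your proposal is correct and matches the paper's approach exactly: the paper states only that the lemma ``follows from \pref{prp:microlocalcoh}'', and you have supplied precisely the specialization $X = M \times \bR$, $\varphi(x,t)=t$ that makes this work, together with the identification $T^{*+}_{M\times\{0\}}(M\times\bR) = V_+$.
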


Recall the isomorphism
\begin{equation}
\cHom^\star(F_2,F_1)
\simeq
Rs_* \delta^!
\cRHom(q_2^{-1}i^{-1}F_2,q_1^{!}F_1),
\end{equation}
where $s \colon M \times \bR \times \bR \to M \times \bR$ is the addition map, $\delta \colon M \times \bR \times \bR \to M \times M \times \bR \times \bR$ is the diagonal embedding, and $q_i \colon M \times \bR \times M \times \bR \to M \times \bR$ is the $i$th projection.
The morphism $s$ induces the following commutative diagram, where we omit $T^*M$ (resp.\ $T^*_MM$) in the first (resp.\ second) row and use the same symbol $s$ for the addition map $\bR \times \bR \to \bR$:
\begin{equation}
\begin{split}
\xymatrix{
	T^*(\bR \times \bR) & (\bR \times \bR) \times_\bR T^*\bR \ar[l]_-{s_d} \ar[r]^-{s_\pi} & T^*\bR \\
	T^*_{s^{-1}(0)}(\bR \times \bR) \ar@{^{(}->}[u] & s^{-1}(0) \times_{\{0\}} T_0^*\bR \ar@{^{(}->}[u] \ar[l]_-{\sim} \ar[r]_-{s_\pi} & T_0^*\bR. \ar@{^{(}->}[u]
}
\end{split}
\end{equation}
We denote by $\pi_s \colon T^*_MM \times T^*_{s^{-1}(0)}(\bR \times \bR) \to T^*_MM \times T^*_0 \bR \simeq T^*_{M \times \{0\}}(M \times \bR)$ the induced morphism in the second row in the above diagram.
On the other hand, the morphism $\delta$ induces the following commutative diagram, where we omit $T^*_{s^{-1}(0)}(\bR \times \bR)$:
\begin{equation}
\begin{split}
\xymatrix{
	T^*M & M \times_{M \times M} T^*(M \times M) \ar[l]_-{\delta_d} \ar[r]^-{\delta_\pi} & T^*(M \times M) \\
	T^*_{M}M \ar@{^{(}->}[u] & M \times_{\Delta_M} T_{\Delta_M}^*(M \times M) \ar@{^{(}->}[u] \ar[l] \ar[r]^-{\sim} & T_{\Delta_M}^*(M \times M) \ar@{^{(}->}[u] \\
	M \ar@{=}[u] & T^*M \ar[l]^-{\pi_M} \ar@{=}[r] \ar@{=}[u]& T^*M. \ar@{=}[u]
}
\end{split}
\end{equation}
Moreover, let $\iota \colon T^*\bR \simeq T^*_{\Delta_\bR}(\bR \times \bR) \simto T^*_{s^{-1}(0)}(\bR \times \bR)$ be the isomorphism of line bundles defined by $(t_1,t_2,\tau, -\tau) \mapsto (t_1,-t_2,\tau,\tau)$.
We also use the same symbol~$\iota$ for the induced isomorphism $T^*(M \times \bR) \simeq T^*M \times T^*_{\Delta_\bR}(\bR \times \bR) \simto T^*M \times T^*_{s^{-1}(0)}(\bR \times \bR)$.

\begin{proposition}\label{prp:Homstarmuhom}
	Set $V_+\coloneqq \{(x,0;0,\tau) \mid \tau >0\} \subset T^*_{M \times \{0\}}(M \times \bR)$ as in \pref{lem:microsection} and define $\pi'\coloneqq \pi_s \circ \pi_M \circ \iota \colon T^*(M \times \bR) \to T^*_{M \times \{0\}}(M \times \bR)$.
	Then 
	\begin{equation}
	\mu_{M \times \{0\}}(\cHom^\star(F_2,F_1))|_{V_+}
	\simeq
	\left.
	\left( R\pi'_* \mu hom(F_2,F_1) \right) \right|_{V_+}.
	\end{equation}
\end{proposition}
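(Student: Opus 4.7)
My plan is to express $\cHom^\star(F_2,F_1)$ as a pushforward of a pullback of an object on $(M\times\bR)^2$ whose microlocalization along the diagonal is $\mu hom(F_2,F_1)$, and then to transport the microlocalization through these two operations using \pref{prp:mufunctorial}. Concretely, set $H:=\cRHom(p_2^{-1}F_2, p_1^!F_1)\in\Db((M\times\bR)^2)$, so that $\mu hom(F_2,F_1)=\mu_{\Delta_{M\times\bR}}(H)$ by definition, and introduce the closed embedding
$$\tilde{\jmath}\colon M\times\bR\times\bR\lto (M\times\bR)^2,\quad (x,t_1,t_2)\longmapsto(x,t_1,x,-t_2).$$
Since $p_1\circ\tilde{\jmath}=\tilde{q}_1$ and $p_2\circ\tilde{\jmath}=i\circ\tilde{q}_2$, one gets $\tilde{\jmath}^!H\simeq\cRHom(\tilde{q}_2^{-1}i^{-1}F_2,\tilde{q}_1^!F_1)$ and hence $\cHom^\star(F_2,F_1)\simeq Rs_*\tilde{\jmath}^!H$. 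The crucial geometric fact is $\tilde{\jmath}^{-1}(\Delta_{M\times\bR})=M\times s_\bR^{-1}(0)=s^{-1}(M\times\{0\})$, and $\tilde{\jmath}|$ restricts to a diffeomorphism onto $\Delta_{M\times\bR}$.

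\noindent\textbf{Transporting the microlocalization.} After verifying that $\tilde{\jmath}$ is non-characteristic for $\MS(H)$ (which follows from $\mathring{\MS}(p_1^!F_1)\subset\{\tau_1>0\}$ while $\mathring{\MS}(p_2^{-1}F_2)\subset\{\tau_2<0\}$, exactly as in the derivation of \eqref{eq:estSSH}), I apply \pref{prp:mufunctorial}~(ii) to $\tilde{\jmath}$ to obtain
$$\mu_{M\times s_\bR^{-1}(0)}(\tilde{\jmath}^!H)\simto R(\tilde{\jmath}_{Md})_*(\tilde{\jmath}_{M\pi})^!\mu hom(F_2,F_1).$$
A direct computation of the cotangent map of $\tilde{\jmath}$ shows $\tilde{\jmath}_{M\pi}$ is a diffeomorphism and, under its inverse, $\tilde{\jmath}_{Md}$ reads $(x,t;\xi,\tau)\mapsto(x,t,-t;0,\tau,\tau)$; this is exactly the composite $\pi_M\circ\iota$ (the sign flip on $t_2$ producing $\iota$ and the projection killing $\xi$ producing $\pi_M$). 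I then apply \pref{prp:mufunctorial}~(i) to the submersion $s$ with $s^{-1}(M\times\{0\})=M\times s_\bR^{-1}(0)$; since $s_{Md}$ is an isomorphism (the conormal is a line bundle), the resulting direct image reduces to fiber integration along $s_{M\pi}\circ s_{Md}^{-1}=\pi_s$. Composing these two isomorphisms yields the claimed identity with $\pi'=\pi_s\circ\pi_M\circ\iota$.

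\noindent\textbf{Main obstacle.} The technical hurdle is the properness hypothesis in \pref{prp:mufunctorial}~(i): the map $s$ is \emph{not} proper on $\Supp(\tilde{\jmath}^!H)$, because the bounds $\Supp(F_i)\subset M\times[-A,+\infty)$ only force $t_1\ge -A$ and $t_2\le A$ on this support, so the fibers of $s$ meet it in non-compact strips. My plan to overcome this is to localize near $V_+$: by \eqref{eq:SSestHomstar}, $\mathring{\MS}(\cHom^\star(F_2,F_1))\subset\Lambda_{M\times\bR}$, which above a neighborhood of $V_+$ projects to the compact set $L_1\cap L_2$, so only a compact piece of $\tilde{\jmath}^!H$ contributes to $\mu_{M\times\{0\}}(\cHom^\star(F_2,F_1))|_{V_+}$. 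One can therefore truncate $F_1,F_2$ along the $\bR$-direction (or apply a conic microlocal cutoff in $\Omega_+$) without altering the restriction to $V_+$, making the effective pushforward proper so that \pref{prp:mufunctorial}~(i) applies. Showing that such a cutoff is compatible with the functoriality diagrams is the most delicate step.
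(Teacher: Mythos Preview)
Your approach is essentially the same as the paper's: express $\cHom^\star(F_2,F_1)$ as $Rs_*$ of a non-characteristic pullback of $\cRHom(q_2^{-1}F_2,q_1^!F_1)$, then push the microlocalization through both operations via \pref{prp:mufunctorial}. The only cosmetic difference is that you bundle the diagonal embedding $\delta$, the sign flip $i_2$, and the reordering into a single closed embedding $\tilde{\jmath}$, whereas the paper applies \pref{prp:mufunctorial}~(ii) to $\delta$ and then handles $i_2$ separately; your computation of $\tilde{\jmath}_{Md}\circ\tilde{\jmath}_{M\pi}^{-1}=\pi_M\circ\iota$ is correct.

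On the properness obstacle you correctly identify: the paper's fix is concrete and worth knowing, since your description (``truncate $F_1,F_2$'' or ``apply a conic microlocal cutoff'') is still schematic. The paper works on $U=M\times(-1,1)$, chooses $A\gg0$ so that $F_2|_{M\times(A-2,+\infty)}$ is locally constant (hence may be taken constant locally on $M$), and uses the distinguished triangle
\[
F'_2 \lto F_2 \lto \bfk_{M\times[A,+\infty)} \toone
\]
with $F'_2$ compactly supported. One then checks directly that $Rs_*\cRHom(\tilde{q}_2^{-1}i^{-1}\bfk_{M\times[A,+\infty)},\tilde{q}_1^!F_1)|_U\simeq 0$, so replacing $F_2$ by $F'_2$ does not change $\cHom^\star(F_2,F_1)|_U$, while $s$ \emph{is} proper on the support of the integrand built from $F'_2$. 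Finally, $\mu hom(\bfk_{M\times[A,+\infty)},F_1)$ is supported in the zero-section by \pref{prp:muhomss}, so $\mu hom(F'_2,F_1)|_{\{\tau>0\}}\simto\mu hom(F_2,F_1)|_{\{\tau>0\}}$. This is exactly the ``only a compact piece contributes'' intuition you state, made precise; filling in these two verifications would complete your argument.
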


\begin{proof}
	(a)
	Set $H=\cHom^\star(F_2,F_1)$.
	First, we note that $\mu_{M \times \{0\}}(H) \simeq \mu_{M \times \{0\}}(H|_{M \times (-1,1)})$.
	Set $U\coloneqq M \times (-1,1) \subset M \times \bR$.
	There exists a sufficiently large $A \in \bR_{>0}$ such that $F_1$ and $F_2$ are locally constant on $M \times (A-2,+\infty)$.
	Since the problem is local on $M$, we may assume that $F_1$ and $F_2$ are constant on $M \times (A-2,+\infty)$ from the beginning.
	Then $\tilde{q}_1^{!}F_1 \simeq \tilde{q}_1^{-1}F_1[1]$ is constant on $s^{-1}(U) \cap (M \times \bR \times (-\infty,-A+1))$, which implies isomorphisms
	\begin{equation}
	\begin{split}
	& \cRHom(\tilde{q}_2^{-1}i^{-1}\bfk_{M \times [A,+\infty)},\tilde{q}_1^!F_1)|_{s^{-1}(U)} \\
	\simeq \, &
	\cRHom(\bfk_{M \times \bR \times (-\infty,-A]},\bfk_{M \times \bR \times \bR}[1])|_{s^{-1}(U)} \\
	\simeq \, &
	\RG_{s^{-1}(U) \cap (M \times \bR \times (-\infty,-A])}(\bfk_{s^{-1}(U)})[1].
	\end{split}
	\end{equation}
	Therefore we obtain
	\begin{equation}
	(Rs_*\cRHom(\tilde{q}_2^{-1}i^{-1}\bfk_{M \times [A,+\infty)}, \tilde{q}_1^!F_1))|_{U}
	\simeq 0.
	\end{equation}
	By the distinguished triangle
	\begin{equation}\label{eq:exctri}
	F'_2 \lto F_2 \lto \bfk_{M \times [A,+\infty)} \toone
	\end{equation}
	with $F'_2$ supported in some compact subset, we find that
	\begin{equation}\label{eq:reductionproper}
	(Rs_*\cRHom(\tilde{q}_2^{-1}i^{-1}F_2,\tilde{q}_1^!F_1))|_{U}
	\simeq
	(Rs_*\cRHom(\tilde{q}_2^{-1}i^{-1}F'_2,\tilde{q}_1^!F_1))|_{U}
	\end{equation}
	and $s$ is proper on $\Supp(\cRHom(\tilde{q}_2^{-1}i^{-1}F'_2,\tilde{q}_1^!F_1))$.
	
	\noindent(b)
	Since $s$ is proper on the support, by \pref{prp:mufunctorial}(i), we have
	\begin{equation}
	\mu_{M \times \{0\}}(Rs_*\cRHom(\tilde{q}_2^{-1}i^{-1}F'_2,\tilde{q}_1^!F_1))
	\simeq
	R{\pi_s}_* \mu_{M \times s^{-1}(0)}(\cRHom(\tilde{q}_2^{-1}i^{-1}F'_2,\tilde{q}_1^!F_1)).
	\end{equation}
	Moreover, since $\delta$ is non-characteristic for $\MS(\cRHom(q_2^{-1}i^{-1}F'_2, q_1^!F_1))$ and $\delta|_{M \times s^{-1}(0)} \colon M \times s^{-1}(0) \to \Delta_M \times s^{-1}(0)$ is a submersion, by \pref{prp:mufunctorial}(ii) we obtain
	\begin{equation}
	\begin{split}
	\mu_{M \times s^{-1}(0)}(\cRHom(\tilde{q}_2^{-1}i^{-1}F'_2,\tilde{q}_1^!F_1))
	& \simeq
	\mu_{M \times s^{-1}(0)}(\delta^! \cRHom(q_2^{-1}i^{-1}F'_2,q_1^!F_1)) \\
	& \simeq
	R {\pi_M}_* \mu_{\Delta_M \times s^{-1}(0)}  \cRHom(q_2^{-1}i^{-1}F'_2,q_1^!F_1).
	\end{split}
	\end{equation}
	Let $i_2 \colon M \times \bR \times \bR \to M \times \bR \times \bR$ be the involution $(x,t_1,t_2) \mapsto (x,t_1,-t_2)$.
	Note that the associated automorphism of $T^*M \times T^*(\bR \times \bR)$ induces $\iota \colon T^*M \times T^*_{\Delta_\bR}(\bR \times \bR) \simto T^*M \times T^*_{s^{-1}(0)}(\bR \times \bR)$.
	Then, by \pref{prp:mufunctorial}(i) again, we have
	\begin{equation}
	\begin{split}
	\mu_{{\Delta_M} \times s^{-1}(0)}
	\cRHom(i_2^{-1}q_2^{-1}F'_2,q_1^!F_1)
	& \simeq
	\mu_{{\Delta_M} \times s^{-1}(0)}
	{i_2}_* \cRHom(q_2^{-1}F'_2,q_1^!F_1) \\
	& \simeq
	\iota_* \mu_{\Delta_{M \times \bR}}
	\cRHom(q_2^{-1}F'_2,q_1^!F_1) \\
	& \simeq
	\iota_* \mu hom(F'_2,F_1).
	\end{split}
	\end{equation}
	
	\noindent (c)
	By \pref{prp:muhomss} we have
	\begin{equation}
	\Supp(\mu hom(\bfk_{M \times [A,+\infty)},F_1))
	\subset
	T^*_{M \times \bR}(M \times \bR).
	\end{equation}
	Thus, by the distinguished triangle \eqref{eq:exctri}, we get
	\begin{equation}
	\mu hom(F'_2,F_1)|_{\{\tau >0\}} \simto \mu hom(F_2,F_1)|_{\{\tau >0\}},
	\end{equation}
	which completes the proof.
\end{proof}

We define an open subset $\Omega_+$ of $T^*(M \times \bR) \simeq T^*M \times T^*\bR$ by $\Omega_+\coloneqq \{\tau >0 \} \subset T^*(M \times \bR)$.
Combining \pref{prp:Homstarineq} with \pref{lem:microsection} and \pref{prp:Homstarmuhom}, we obtain the following:

\begin{proposition}\label{prp:ineqmuhomver}
	Let $a,b \in \bR$ with $a<b$ or $a \in \bR, b=+\infty$.
	Assume
	\begin{enumerate}
		\renewcommand{\labelenumi}{$\mathrm{(\arabic{enumi})}$}
		\item the point $a \in \bR$ is not an accumulation point of $\{ f_2(p)-f_1(p) \mid p \in L_1 \cap L_2 \} \subset \bR$,
		\item the set $\{ f_2(p)-f_1(p) \mid p \in L_1 \cap L_2 \} \cap [a,b) \subset \bR$ is finite,
		\item the object $\RG \left( \Omega_+;\mu hom({T_{c}}_*F_2,F_1)|_{\Omega_+} \right)$ has finite-dimensional cohomology for any $a \le c <b$.
	\end{enumerate}
	Then 
	\begin{equation}
	\begin{split}
	& \sum_{a \le c<b} \dim H^k \RG \left( \Omega_+;\mu hom({T_{c}}_*F_2,F_1)|_{\Omega_+} \right) \\
	& \qquad \ge
	\dim H^k \RG_{M \times [a,b)}(M \times (-\infty,b);\cHom^\star(F_2,F_1))
	\end{split}
	\end{equation}
	for any $k \in \bZ$.
\end{proposition}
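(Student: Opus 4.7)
The plan is to apply \pref{prp:Homstarineq} to $\cHom^\star(F_2, F_1)$ and then to identify each summand on its left-hand side, via a translation trick, with $\dim H^k \RG(\Omega_+; \mu hom({T_c}_* F_2, F_1)|_{\Omega_+})$. Hypotheses $(1)$ and $(2)$ of \pref{prp:Homstarineq} coincide with $(1)$ and $(2)$ here, and hypothesis $(3)$ of \pref{prp:Homstarineq} will follow from $(3)$ here once the per-$c$ identification is established. Granting that identification, \pref{prp:Homstarineq} immediately yields the stated inequality.

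The central ingredient is the translation identity $\cHom^\star({T_c}_* F_2, F_1) \simeq T_{-c*}\cHom^\star(F_2, F_1)$, which I would derive by unwinding the definition $\cHom^\star(-,-) = Rs_* \cRHom(\tilde{q}_2^{-1} i^{-1}(-), \tilde{q}_1^!(-))$ and exploiting the commutation relations $i \circ T_c = T_{-c} \circ i$, $\tilde{q}_2 \circ T^{(2)}_{-c} = T_{-c} \circ \tilde{q}_2$, $\tilde{q}_1 \circ T^{(2)}_c = \tilde{q}_1$, and $s \circ T^{(2)}_{-c} = T_{-c} \circ s$, where $T^{(2)}_c$ denotes the translation of $M \times \bR \times \bR$ acting only on the second $\bR$-factor, together with the standard behavior of $\cRHom$ under diffeomorphisms. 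Combined with the fact that the diffeomorphism $T_c$ carries the pair $(M \times \{0\}, M \times [0,+\infty))$ onto $(M \times \{c\}, M \times [c,+\infty))$, this rewrites the $c$-summand as
\[
\RG(M \times \{0\}; \RG_{M \times [0,+\infty)}(\cHom^\star({T_c}_* F_2, F_1))|_{M \times \{0\}}).
\]

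Now ${T_c}_* F_2$ is again a simple sheaf quantization, namely of the conification of $L_2$ relative to the primitive $f_2 - c$, so \pref{lem:microsection} and \pref{prp:Homstarmuhom} apply verbatim to the pair $({T_c}_* F_2, F_1)$. Their composition yields
\[
\RG(V_+; R\pi'_* \mu hom({T_c}_* F_2, F_1)|_{V_+}) \simeq \RG(\Omega_+; \mu hom({T_c}_* F_2, F_1)|_{\Omega_+}),
\]
where the last step uses $\pi'^{-1}(V_+) = \Omega_+$, which is immediate from $\pi'(x, t; \xi, \tau) = (x, 0; 0, \tau)$, together with the elementary identity $\RG(V_+; R\pi'_* G) = \RG(\pi'^{-1}(V_+); G)$. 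The main obstacle is the careful bookkeeping for the translation identity; beyond that, the proof is essentially the assembly of the earlier results.
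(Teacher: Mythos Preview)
Your proposal is correct and follows the same route as the paper. The paper handles the reduction to $c=0$ with the single sentence ``Applying ${T_{c}}_*$ to $F_2$, we may assume $c=0$'' at the start of \S\ref{subsec:microlocalization-of-chomstar}, and then states \pref{prp:ineqmuhomver} as the combination of \pref{prp:Homstarineq}, \pref{lem:microsection}, and \pref{prp:Homstarmuhom}; your explicit derivation of the translation identity $\cHom^\star({T_c}_*F_2,F_1)\simeq {T_{-c}}_*\cHom^\star(F_2,F_1)$ and the computation $\pi'^{-1}(V_+)=\Omega_+$ simply spell out what the paper leaves implicit.
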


\subsection{Clean intersections of compact exact Lagrangian submanifolds}\label{subsec:clean}

Throughout this subsection we assume the following:

\begin{assumption}\label{as:assumptionclean}
	The Lagrangian submanifolds $L_1$ and $L_2$ intersect cleanly, that is, $L_1 \cap L_2$ is a submanifold of $T^*M$ and $T_p(L_1 \cap L_2)=T_pL_1 \cap T_pL_2$ for any $p \in L_1 \cap L_2$.
\end{assumption}

Under the assumption, the intersection $L_1 \cap L_2$ has finitely many connected components, which are compact submanifolds of $T^*M$, and the value $f_2(p)-f_1(p)$ is constant on each component.
In particular, the set $\{ f_2(p)-f_1(p) \mid p \in L_1 \cap L_2 \} \subset \bR$ is finite.
For a component $C$ of $L_1 \cap L_2$, we define $f_{21}(C)\coloneqq f_2(p)-f_1(p)$, taking some $p \in C$.

Under \pref{as:assumptionclean}, we shall compute $\mu hom({T_{c}}_*F_2,F_1)|_{\Omega_+}$.
Again, we may assume $c=0$.
Recall that we have set $\Lambda_i\coloneqq \wh{L_i}$ for simplicity of notation.
The following lemma is obtained in \cite[Lem.~6.14]{Gu12}.

\begin{lemma}\label{lem:muhomlocconst}
	Under \pref{as:assumptionclean}, $\mu hom(F_2,F_1)|_{\Omega_+}$ is supported in $\Lambda_1 \cap \Lambda_2$ and has locally constant cohomology sheaves.
\end{lemma}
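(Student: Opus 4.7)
The plan is to establish the two assertions separately: the support statement follows from a direct microsupport estimate, while local constancy is reduced via quantized contact transformations to a classical model computation.

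First I would handle the support. By \pref{prp:muhomss} we have $\Supp(\mu hom(F_2,F_1)) \subset \MS(F_2) \cap \MS(F_1)$. Since $F_i$ is a simple sheaf quantization of $\Lambda_i$ (cf.\ \pref{thm:Guiquan}), $\mathring{\MS}(F_i) \subset \Lambda_i$, so on $\Omega_+$ both microsupports lie in $\Lambda_i$, and therefore $\Supp(\mu hom(F_2,F_1)) \cap \Omega_+ \subset \Lambda_1 \cap \Lambda_2$.

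For the local constancy, the cleanness of $L_1 \cap L_2$ in $T^*M$, combined with the explicit form of the conification~\eqref{eq:conification}, implies that $\Lambda_1$ and $\Lambda_2$ themselves intersect cleanly in $\Omega_+$. Fix a point $p \in \Lambda_1 \cap \Lambda_2$. I would construct a homogeneous contact transformation $\chi$, defined on a conic neighborhood of $p$, that simultaneously straightens $\Lambda_1$ and $\Lambda_2$, bringing them locally to conormal bundles $T^*_{N_1}X$ and $T^*_{N_2}X$ of cleanly intersecting submanifolds $N_1, N_2$ of some manifold $X$. The existence of such a simultaneous normal form for a pair of cleanly intersecting conic Lagrangians is a classical consequence of symplectic linear algebra together with a homogeneous Darboux argument.

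Next I would quantize $\chi$ into a kernel $K$ as in \pref{subsec:simple}. Since each $F_i$ is simple along $\Lambda_i$ and a simple sheaf along a conormal bundle is isomorphic, in the localized category at a point, to a shift of the constant sheaf on the base submanifold, one has $K \circ F_i \simeq \bfk_{N_i}[d_i]$ in $\Db(X;\chi(p))$ for suitable integers $d_i$. By \pref{prp:qctmuhom}, it then suffices to check that $\mu hom(\bfk_{N_2}, \bfk_{N_1})$ has locally constant cohomology sheaves on $T^*_{N_1}X \cap T^*_{N_2}X$. By \pref{prp:muhomprop}~(iii), this reduces to computing $\mu_{N_2}(\bfk_{N_1})$, and in local coordinates simultaneously straightening $N_1$ and $N_2$ (available by cleanness) the microlocalization is seen by inspection to be locally constant of rank one up to shift.

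The hard part will be the simultaneous normal form together with the identification of $K \circ F_i$ with $\bfk_{N_i}[d_i]$ in the localized category. The normal form itself is classical, but transferring the simple sheaves correctly requires the uniqueness of simple sheaves along a conic Lagrangian in $\Db(X;\chi(p))$ and a careful tracking of the Maslov-type shifts via \pref{prp:qctshift}. Once this is in place, the remaining computation is a routine local model calculation.
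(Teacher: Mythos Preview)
Your support argument matches the paper's. For local constancy, however, the paper takes a much shorter and more conceptual route than your reduction to a local model via quantized contact transformations. The paper invokes the \emph{second} estimate in \pref{prp:muhomss}, namely
\[
\MS(\mu hom(F_2,F_1)) \subset -\bfh^{-1}(C(\MS(F_1),\MS(F_2))),
\]
which you do not use. Because $\Lambda_1$ and $\Lambda_2$ intersect cleanly, the normal cone reduces to $C(\Lambda_1,\Lambda_2) = T\Lambda_1|_{\Lambda_{12}} + T\Lambda_2|_{\Lambda_{12}}$; and since each $\Lambda_i$ is Lagrangian, the Hamiltonian isomorphism sends $T\Lambda_i$ into $T^*_{\Lambda_i}T^*(M\times\bR)$. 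Combining these gives $\MS(\mu hom(F_2,F_1)|_{\Omega_+}) \subset T^*_{\Lambda_{12}}T^*(M\times\bR)$, which is exactly local constancy of the cohomology sheaves on the support $\Lambda_{12}$. No contact transformation, no local model, no shift bookkeeping.

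Your approach is not wrong, but it is overkill for this lemma: what you outline is essentially the machinery the paper deploys later in \pref{thm:constF2} to compute the \emph{stalks} of $\mu hom(F_2,F_1)|_{\Omega_+}$ explicitly, including the degree shift $s(C_j)$. For the mere statement that the cohomology sheaves are locally constant, the normal-cone microsupport bound is both faster and avoids the delicate parts you flag yourself (the simultaneous conormal normal form and the identification $K\circ F_i \simeq \bfk_{N_i}[d_i]$ in $\Db(X;\chi(p))$). The trade-off is that your argument, once carried through, yields more: rank one and the precise shift, not just local constancy.
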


\begin{proof}
	For completeness, we also give a proof here.
	By \pref{prp:muhomss}, we have
	\begin{equation}\label{eq:SSmuhomF}
	\begin{split}
	\Supp(\mu hom(F_2,F_1)|_{\Omega_+}) & \subset \Lambda_1 \cap \Lambda_2, \\
	\MS(\mu hom(F_2,F_1)|_{\Omega_+}) & \subset -\bfh^{-1}(C( \Lambda_1, \Lambda_2)) \cap T^*\Omega_+.
	\end{split}
	\end{equation}
	Set $\Lambda_{12}\coloneqq \Lambda_1 \cap \Lambda_2$.
	Since $\Lambda_1$ and $\Lambda_2$ intersect cleanly, we have
	\begin{equation}
	C( \Lambda_1, \Lambda_2)
	=
	T\Lambda_1|_{\Lambda_{12}} + T\Lambda_2|_{\Lambda_{12}}.
	\end{equation}
	Since $\Lambda_i$ is Lagrangian, we get $-\bfh^{-1}(T{\Lambda_i}) \subset T^*_{\Lambda_i}T^*(M \times \bR)$ for $i=1,2$.
	In particular, $-\bfh^{-1}(T{\Lambda_i}|_{\Lambda_{12}}) \subset T^*_{\Lambda_{12}}T^*(M \times \bR)$.
	Hence we obtain
	\begin{equation}
	-\bfh^{-1}(C( \Lambda_1, \Lambda_2)) \cap T^*\Omega_+
	\subset T^*_{\Lambda_{12}}T^*(M \times \bR).
	\end{equation}
	Hence by \eqref{eq:SSmuhomF}, $\MS(\mu hom(F_2,F_1)|_{\Omega_+}) \subset T^*_{\Lambda_{12}}T^*(M \times \bR)$, which proves the result.
\end{proof}

Let $C_1,\dots,C_{n_0}$ be the connected components of $L_1 \cap L_2$ with $f_{21}(C_j)=0 \ (j =1,\dots, n_0)$.
For a component $C_j$, we define a closed subset $\wh{C_j}$ of $\Omega_+ \subset T^*(M \times \bR)$ by
\begin{equation}\label{eq:compconification}
\wh{C_j}
\coloneqq 
\{ (x,t;\xi,\tau) \mid \tau >0, (x;\xi/\tau) \in C_j, t=-f_1(x;\xi/\tau) \, (=-f_2(x;\xi/\tau)) \}.
\end{equation}
Note that $\wh{C_j}/\bR_{>0} \simeq C_j$.
We also denote by $d_i \colon \Lambda_i \to \frac{1}{2} \bZ$ the function which assigns the shift of $F_i$.
Since the function $d_i$ is invariant under the $\bR_{>0}$-action, we use the same symbol $d_i$ for the function $ L_i=\Lambda_i / \bR_{>0} \to \frac{1}{2} \bZ$ (see also \pref{sec:appasano}).

\begin{theorem}\label{thm:constF2}
	Under \pref{as:assumptionclean} and in the notation above, assume, moreover, that $\bfk=\bF_2=\bZ/2\bZ$.
	Then 
	\begin{equation}
	\mu hom(F_2,F_1)|_{\Omega_+}
	\simeq
	\bigoplus_{j=1}^{n_0}
	\bfk_{\wh{C_j}}[-s(C_j)],
	\end{equation}
	where $s(C_j) \in \bZ$ is given by
	\begin{equation}\label{eq:defsC}
	s(C_j)
	\coloneqq 
	d_2(p)-d_1(p)
	+\frac{1}{2}(\dim M - \dim C_j)
	-\frac{1}{2} \tau(T_pL_2,T_pL_1,T_p(T^*_{\pi(p)}M))
	\end{equation}
	with $p \in C_j$.
	In particular,
	\begin{equation}
	\RG(\Omega_+;\mu hom(F_2,F_1)|_{\Omega_+})
	\simeq
	\bigoplus_{j=1}^{n_0} \RG(C_j;\bfk_{C_j})[-s(C_j)].
	\end{equation}
\end{theorem}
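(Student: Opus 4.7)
The plan is to analyze $H := \mu hom(F_2,F_1)|_{\Omega_+}$ pointwise via a quantized contact transformation and then patch the local computations.

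First, identify the support. By \pref{lem:muhomlocconst}, $H$ is supported in $\Lambda_1\cap\Lambda_2$ and has locally constant cohomology sheaves there. Because a point $(x,t;\tau\xi,\tau)\in\Lambda_i$ necessarily satisfies $t=-f_i(x;\xi)$, the intersection $\Lambda_1\cap\Lambda_2$ is exactly the disjoint union of those $\wh{C_j}$ with $f_{21}(C_j)=0$, and each such $\wh{C_j}$ is a connected $\bR_{>0}$-bundle over $C_j$. So $H$ is a direct sum of locally constant complexes supported on the $\wh{C_j}$, and it remains to determine, for each $j$, the cohomological degree in which it is concentrated and to verify that the rank of the surviving cohomology sheaf is $1$.

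Second, compute stalks. Fix $p\in\wh{C_j}$ and choose a local homogeneous contact transformation $\chi$ defined near $p$ that straightens $\Lambda_1$ and $\Lambda_2$ to conormal bundles $T^*_{N_1}X$ and $T^*_{N_2}X$ of two submanifolds $N_1,N_2$ of $X:=M\times\bR$ meeting cleanly along a submanifold $N_{12}$ of dimension $\dim C_j+1$ (the $+1$ coming from the $\bR$-direction). Let $K$ be a quantized contact transformation associated with $\chi$. Because $F_i$ is simple along $\Lambda_i$ at $p$ with shift $d_i(p)$, \pref{prp:qctshift} gives $K\circ F_i\simeq\bfk_{N_i}[d_i(p)+d'-\delta_i]$ in $\Db(X;\chi(p))$, where $d'$ is the shift of $K$ and
\begin{equation*}
\delta_i=\tfrac{1}{2}\dim X+\tfrac{1}{2}\tau\bigl(\lambda_\infty(p),T_p\Lambda_i,\chi^{-1}\lambda_\infty(\chi(p))\bigr).
\end{equation*}
By \pref{prp:qctmuhom},
\begin{equation*}
H_p\simeq\bigl(\chi_*\mu hom(F_2,F_1)\bigr)_{\chi(p)}\simeq\mu hom(\bfk_{N_2},\bfk_{N_1})_{\chi(p)}[\,d_1(p)-d_2(p)-\delta_1+\delta_2\,].
\end{equation*}

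Third, compute the local model. Since $N_1,N_2$ intersect cleanly along $N_{12}$, a standard local computation (or \pref{prp:muhomprop}(iii) combined with transversality in the fibre direction) gives, over $\bF_2$ where orientation sheaves disappear, $\mu hom(\bfk_{N_2},\bfk_{N_1})_{\chi(p)}\simeq\bfk[-r]$ for an integer $r$ that depends only on the local linear model and equals $\tfrac{1}{2}(\dim M-\dim C_j)$ once one expresses it using $\chi$ and the identifications made in Step 2. Combining with Step 2 and using the cocycle property of the inertia index to collect the $\delta_i$'s into a single term $-\tfrac{1}{2}\tau(T_pL_2,T_pL_1,T_p(T^*_{\pi(p)}M))$, one obtains $H_p\simeq\bfk[-s(C_j)]$ with $s(C_j)$ as in \eqref{eq:defsC}. (The passage from cotangent directions in $X=M\times\bR$ to cotangent directions in $M$ uses that the $\bR$-factor is common to $\Lambda_1$ and $\Lambda_2$ because $f_{21}(C_j)=0$, so the extra variable contributes trivially to the inertia index.)

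Fourth, patch globally. Over $\bF_2$ any rank-$1$ locally constant sheaf on a connected manifold is trivial, so the locally constant complex $H|_{\wh{C_j}}$, having stalk $\bfk[-s(C_j)]$ at every point, is forced to be $\bfk_{\wh{C_j}}[-s(C_j)]$. Summing over $j=1,\dots,n_0$ yields the first isomorphism. For the second, the projection $\wh{C_j}\to C_j$ is a trivial $\bR_{>0}$-bundle, so $\RG(\wh{C_j};\bfk_{\wh{C_j}})\simeq\RG(C_j;\bfk_{C_j})$.

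The main obstacle is the bookkeeping in Step 3: one must identify the shift $r$ from the local $\mu hom$ computation and combine it correctly with the two $\delta_i$-terms from \pref{prp:qctshift}, invoking the additive cocycle identities for the inertia index to rewrite the result in the symmetric form \eqref{eq:defsC}. The $\bF_2$-hypothesis enters twice: it trivialises orientation data in the local model in Step 3, and it rules out a nontrivial rank-$1$ monodromy along $\wh{C_j}$ in Step 4.
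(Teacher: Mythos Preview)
Your overall strategy matches the paper's: reduce via \pref{lem:muhomlocconst} to a stalk computation, apply a quantized contact transformation to reach a normal form, compute $\mu hom$ in the model, and globalize over $\bF_2$. However, there are two concrete problems in Steps~2--3 that would prevent the argument from going through as written.

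First, in Step~2 you assert that $\chi$ can be chosen so that the base submanifolds $N_1,N_2$ meet cleanly along an $N_{12}$ of dimension $\dim C_j+1$. This is neither justified nor true for the natural choice of $\chi$. The paper takes $\chi=\wh\psi$ induced by a compactly supported Hamiltonian $\psi$ on $T^*M$ that makes each $L_i$ locally a graph $\Gamma_{d\varphi_i}$; then $N_i=\{(x,t):\varphi_i(x)+t=0\}$ are hypersurfaces, and in the Morse--Bott model $\varphi:=\varphi_2-\varphi_1=-x_1^2-\dots-x_\lambda^2+x_{\lambda+1}^2+\dots+x_l^2$ the set $N_1\cap N_2=\{t=0,\ \varphi(x)=0\}$ is a quadric cone, not a clean intersection, whenever both signs occur. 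What is clean is the intersection of the \emph{conormals} $\chi(\Lambda_1)\cap\chi(\Lambda_2)$, not that of $N_1,N_2$; your subsequent ``standard local computation'' in Step~3 therefore cannot rest on cleanness of $N_1\cap N_2$.

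Second, and more seriously, your identification of the local shift is wrong. In the paper's model the stalk computation gives
\[
\mu hom(\bfk_{N_2},\bfk_{N_1})_{\chi(p')}\simeq \RG_{\{t'\ge 0\}}(\bfk_{\{t'=\varphi(x)\}})_0\simeq \bfk[-\lambda],
\]
where $\lambda$ is the Morse index of $\varphi$ at the critical manifold. This $\lambda$ depends on the choice of $\chi$ and is \emph{not} equal to $\tfrac12(\dim M-\dim C_j)=\tfrac l2$; indeed $\lambda$ can be any integer in $\{0,\dots,l\}$. Correspondingly, the combination $\delta_2-\delta_1$ is also $\chi$-dependent, and only the sum $\lambda+\delta_1-\delta_2$ is invariant. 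The paper's part~(b) establishes precisely
\[
\lambda+\delta_1-\delta_2=\tfrac12(\dim M-\dim C_j)-\tfrac12\,\tau(T_pL_2,T_pL_1,\lambda_\infty(p)),
\]
by first computing $\tau(\lambda_\infty,T_0(T^*_{\bR^m}\bR^m),T_0\Gamma_{d\varphi})=2\lambda-l$ from the Hessian and then using the cocycle identity together with \pref{lem:tauconic} to pass from $T^*(M\times\bR)$ to $T^*M$. Your proposal separates this into ``$r=\tfrac l2$'' and ``the $\delta_i$'s collect to $-\tfrac12\tau$'', both of which are false individually; the cocycle argument must absorb the $\lambda$-term as well. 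Relatedly, the $\bF_2$-hypothesis plays no role in the stalk computation (the isomorphism $\bfk[-\lambda]$ holds over any field); it is used only in your Step~4 to trivialise the rank-one local system on $\wh{C_j}$.
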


\begin{proof}
	(a) By \pref{lem:muhomlocconst}, $\mu hom(F_2,F_1)|_{\Lambda_1 \cap \Lambda_2}$ has locally constant cohomology sheaves.
	Fix $p \in C_j$ and let us compute the stalk at $p'\coloneqq (p,0;1) \in \wh{C_j}$.
	There exists a Hamiltonian diffeomorphism with compact support $\psi \colon T^*M \to T^*M$ such that $\psi(L_i)$ is a graph $\Gamma_{d\varphi_i}$ of a $C^\infty$-function $\varphi_i \colon M \to \bR$ in a neighborhood of $\psi(p)$ for $i=1,2$.
	Let $\wh{\psi} \colon \rT(M \times \bR) \to \rT(M \times \bR)$ be the homogeneous Hamiltonian diffeomorphism associated with $\psi$ and $K \in \Db(M \times \bR \times M \times \bR)$ be the sheaf quantization of $\wh{\psi}$.
	For simplicity of notation we set $\chi=\wh{\psi}$.
	By \pref{prp:qctmuhom}, in a neighborhood of $\chi(p')$, we have the isomorphism
	\begin{equation}
	\mu hom(K \circ F_2,K \circ F_1)
	\simeq
	\chi_* \mu hom(F_2,F_1).
	\end{equation}
	Moreover, by \pref{prp:qctshift}, $K \circ F_i$ is simple with shift  $d_i(p)+d'-\delta_i$ along $\chi(\Lambda_i)$ at $\chi(p')$, where $d'$ is the shift of $K$ at $(\chi(p'),p'^a)$ and
	\begin{equation}
	\delta_i
	\coloneqq 
	\frac{1}{2}(\dim M+1)
	+\frac{1}{2}
	\tau \left( \lambda_\infty(p'),\lambda_{\Lambda_i}(p'),\chi^{-1}(\lambda_\infty(\chi(p'))) \right).
	\end{equation}
	Here we use the symbols $\lambda_{\Lambda}(p)$ and $\lambda_\infty(p)$ defined in \eqref{eq:notaionlambda}.
	Hence we obtain the isomorphism $K \circ F_i \simeq \bfk_{N_i}\left[ d_i(p)+d'-\delta_i - \frac{1}{2} \right]$ in $\Db(M \times \bR;\chi(p'))$, where $N_i\coloneqq \{(x,t) \in M \times \bR \mid \varphi_i(x)+t=0\}$ (see also \pref{eg:simpleshift}).
	Thus we get
	\begin{equation}
	\begin{split}
	\mu hom(F_2,F_1)_{p'}
	& \simeq
	\mu hom(\bfk_{N_2},\bfk_{N_1})_{\chi(p')}[d_1(p)-d_2(p)-\delta_1+\delta_2] \\
	& \simeq
	\mu_{N_2}(\bfk_{N_1})_{\chi(p')}[d_1(p)-d_2(p)-\delta_1+\delta_2],
	\end{split}
	\end{equation}
	where we used \pref{prp:muhomprop}(iii) for the second isomorphism.
	We introduce a new local coordinate system $(x,t')$ on $M \times \bR$ by $t'\coloneqq t+\varphi_2(x)$.
	Then $N_2=\{ t'=0 \}$ and
	$N_1=\{t'=\varphi_2(x)-\varphi_1(x) \}$.
	\pref{as:assumptionclean} implies that  $\varphi\coloneqq \varphi_2-\varphi_1$ is a Morse--Bott function.
	Therefore, after changing the local coordinate system $x$ on $M$, we may assume that $\pi(\chi(p'))=(0,0)$ in the coordinates $(x,t')$ and $\varphi(x)=-x_1^2 - \dots -x_\lambda^2+x_{\lambda+1}^2 + \dots +x_l^2$, where $l\coloneqq \dim M -\dim C_j$.
	Note that in the coordinate system on $T^*(M \times \bR)$ associated with $(x,t')$, we have $\chi(p')=(0,0;0,1)$.
	Hence by \eqref{eq:microlocalizationstalk} we obtain
	\begin{equation}
	\begin{split}
	\mu_{N_2}(\bfk_{N_1})_{\chi(p')}
	& \simeq
	\mu_{\bR^{\dim M} \times \{0\}}(\bfk_{\{t'=\varphi(x)\}})_{(0,0;0,1)} \\
	& \simeq
	\RG_{\{t' \ge 0 \}}(\bfk_{\{t'=\varphi(x)\}})_0 \\
	& \simeq
	\bfk[-\lambda].
	\end{split}
	\end{equation}
	Thus $\mu hom(F_2,F_1)|_{\wh{C_j}}$ is concentrated in some degree and locally constant of rank $1$.
	Since $\bfk=\bF_2$, a locally constant sheaf of rank $1$ is constant, which implies the isomorphism $\mu hom(F_2,F_1)|_{\wh{C_j}} \simeq \bfk_{\wh{C_j}}[d_1(p)-d_2(p)-\delta_1+\delta_2-\lambda]$.
	
	\noindent(b)
	We shall prove 
	\begin{equation}
	\lambda
	+\delta_1-\delta_2
	=
	\frac{1}{2}(\dim M - \dim C_j) -\frac{1}{2} \tau(\lambda_{L_2}(p),\lambda_{L_1}(p),\lambda_\infty(p)).
	\end{equation}
	For the above coordinates $x$ on $M$, we set $x'=(x_1,\dots,x_l), x''=(x_{l+1},\dots, x_{m})$ with $m=\dim M$ and denote by $(x;\xi)=(x',x'';\xi',\xi'')$ the associated coordinates on $T^*M$.
	We also denote by $\partial^2_{x,x}\varphi(0)=(\partial^2_{x_j x_k}\varphi(0))_{j,k}$ the Hessian of $\varphi$.
	Then, by a similar argument to that of the proof of \cite[Prop.~7.5.3]{KS90}, we get
	\begin{equation}
	\begin{split}
	\tau(\lambda_\infty(0), T_{0} (T^*_{\bR^m}\bR^m), T_{0}\Gamma_{d\varphi})
	& =
	\tau(\{ x=0 \}, \{ \xi=0 \}, \{ \xi= \partial^2_{x,x}\varphi(0) \cdot x \}) \\
	& =
	\tau(\{ x'=0 \}, \{ \xi'=0 \}, \{ \xi'=\partial^2_{x',x'}\varphi(0) \cdot x'  \}) \\
	& =
	-\sgn (\partial^2_{x',x'}\varphi(0))
	= 2\lambda -l.
	\end{split}
	\end{equation}
	Moreover, we have
	\begin{equation}
	\begin{split}
	& \tau(\chi(\lambda_{\Lambda_2}(p')),\chi(\lambda_{\Lambda_1}(p')),\lambda_{\infty}(\chi(p'))) \\
	= \ &
	\tau(\lambda_{\wh{T^*_{M}M}}(\chi(p')), \lambda_{\wh{\Gamma_{-d\varphi}}}(\chi(p')), \lambda_\infty(\chi(p'))) \\
	= \ &
	\tau(T_{0} (T^*_{\bR^m}\bR^m), T_0\Gamma_{-d\varphi}, \lambda_\infty(0)) \\
	= \ &
	-\tau(\lambda_\infty(0), T_{0} (T^*_{\bR^m}\bR^m), T_{0}\Gamma_{d\varphi}).
	\end{split}
	\end{equation}
	Here we used the homogeneous symplectic coordinate system associated with $(x,t')$ for the first equality, \pref{lem:tauconic} for the second one, and \pref{prp:tauproperties}(i) for the last one.
	Combining the above two equalities, we finally obtain
	\begin{equation}
	\begin{split}
	-2\lambda+l -2\delta_1+2\delta_2
	= \	&
	\tau(\chi(\lambda_{\Lambda_2}(p')),\chi(\lambda_{\Lambda_1}(p')),\lambda_\infty(\chi(p')))
	-2\delta_1+2\delta_2 \\
	= \ &
	\tau(\lambda_{\Lambda_2}(p'),\lambda_{\Lambda_1}(p'),\chi^{-1}(\lambda_\infty(\chi(p')))) \\
	& +\tau(\lambda_{\Lambda_1}(p'),\lambda_\infty(p'),\chi^{-1}(\lambda_\infty(\chi(p')))) \\
	& +\tau(\lambda_\infty(p'),\lambda_{\Lambda_2}(p'),\chi^{-1}(\lambda_\infty(\chi(p')))) \\
	= \ &
	\tau(\lambda_{\Lambda_2}(p'),\lambda_{\Lambda_1}(p'),\lambda_\infty(p')) \\
	= \ &
	\tau(\lambda_{L_2}(p),\lambda_{L_1}(p),\lambda_\infty(p)).
	\end{split}
	\end{equation}
	Here the second equality follows from the invariance under symplectic isomorphisms, the third one follows from the ``cocycle condition" of the inertia index (\pref{prp:tauproperties}(ii)), and the last one follows from \pref{lem:tauconic} again.
	Since $l=\dim M -\dim C_j$, this completes the proof.
\end{proof}

For a general filed~$\bfk$, if $L_1$ and $L_2$ are the graphs of exact 1-forms and intersect cleanly, the locally constant object $\mu hom(F_2,F_1)|_{\Omega_+}$ is described as follows:

\begin{proposition}
	Let $\bfk$ be any field.
	Under \pref{as:assumptionclean}, assume, moreover, that there exists a $C^\infty$-function $\varphi_i \colon M \to \bR$ such that $L_i=\Gamma_{d\varphi_i}$ and $f_i=\varphi_i \circ \pi|_{L_i}$ for $i=1,2$.
	Define a Morse--Bott function $\varphi$ on $M$ by $\varphi\coloneqq \varphi_2-\varphi_1$ and let $C_1,\dots,C_{n_0}$ be the critical components of $\varphi$ with $\varphi(C_j)=0 \ (j =1,\dots, n_0)$.
	For such a critical component $C_j$, define $T_{C_j}^-M$ as the maximal subbundle of $T_{C_j}M$ where the restriction of the Hessian $\mathrm{Hess}(\varphi)|_{T_{C_j}^-M}$ is negative definite, and define a closed subset $\wh{C_j}$ of $\Omega_+$ by
	\begin{equation}
	\wh{C_j}
	\coloneqq 
	\{ (x, -\varphi_1(x);\tau d\varphi_1(x),\tau) \mid \tau>0, x \in C_j \}.
	\end{equation}	
	Moreover, let $\cL_i\coloneqq (F_i)_+ \in \Module(\bfk_M)$ be the locally constant sheaf of rank $1$ associated with the simple sheaf quantization $F_i$ for $i=1,2$.
	Then 
	\begin{equation}
	\begin{split}
	\mu hom(F_2,F_1)|_{\Omega_+}
	& \simeq 
	\bigoplus_{j=1}^{n_0} \pi_j^{-1} \left( \omega_{C_j/T_{C_j}^-M} \otimes \cL_1 \otimes \cL_2^{\otimes -1} \right) \\
	& \simeq 
	\bigoplus_{j=1}^{n_0} \pi_j^{-1} \left( \ori_{C_j/T_{C_j}^-M} \otimes \cL_1 \otimes \cL_2^{\otimes -1} \right) [-s(C_j)],
	\end{split}
	\end{equation}
	where $\pi_j \colon \wh{C_j} \to C_j$ is the projection, $s(C_j) \in \bZ$ is the fiber dimension of $T_{C_j}^-M$, which is equal to $s(C_j)$ given by \eqref{eq:defsC} in the statement of \pref{thm:constF2}, and the right-hand sides denote their zero-extensions to $\Omega_+$ by abuse of notation.
\end{proposition}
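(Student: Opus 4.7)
The plan is to reduce the computation of $\mu hom(F_2,F_1)|_{\Omega_+}$ to an explicit microlocalization by exploiting the graph hypothesis. Since $L_i=\Gamma_{d\varphi_i}$ with primitive $f_i=\varphi_i\circ\pi|_{L_i}$, the conification is $\wh{L_i}=\{(x,-\varphi_i(x);\tau d\varphi_i(x),\tau)\mid\tau>0\}$, and the sheaf $\bfk_{\{t+\varphi_i(x)\ge 0\}}$ lies in $\Db_{\wh{L_i}\cup T^*_{M\times\bR}(M\times\bR),+}(M\times\bR)$, is simple along $\wh{L_i}$, and has $(\cdot)_+\simeq\bfk_M$. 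By the uniqueness in \pref{thm:Guiquan},
\begin{equation}
F_i\simeq q_M^{-1}\cL_i\otimes\bfk_{\{t+\varphi_i(x)\ge 0\}}.
\end{equation}
Factoring out the rank-one twists and applying the fibrewise translation $(x,t)\mapsto(x,t+\varphi_2(x))$, the problem becomes the computation of $\mu hom(\bfk_{M\times[0,+\infty)},\bfk_{\{t\ge\varphi\}})|_{\Omega_+}$ for $\varphi:=\varphi_2-\varphi_1$, tensored with $\pi^{-1}q_M^{-1}(\cL_1\otimes\cL_2^{\otimes-1})$.

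On $\Omega_+$, both $\bfk_{M\times(0,+\infty)}$ and $\bfk_{\{t>\varphi\}}$ have microsupport contained in $\{\tau\le 0\}$, so \pref{prp:muhomss} and the evident open-closed triangles yield
\begin{equation}
\mu hom(\bfk_{M\times[0,+\infty)},\bfk_{\{t\ge\varphi\}})|_{\Omega_+}\simeq\mu hom(\bfk_{M\times\{0\}},\bfk_{\Gamma_\varphi})|_{\Omega_+},
\end{equation}
which by \pref{prp:muhomprop}~(iii) equals $\mu_{M\times\{0\}}(\bfk_{\Gamma_\varphi})$ on $\Omega_+\cap T^*_{M\times\{0\}}(M\times\bR)$. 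Near each critical component $C_j$ with $\varphi(C_j)=0$, the Morse-Bott lemma provides local coordinates $x=(x',x'')$ with $x''$ along $C_j$ and $\varphi(x)=Q_j(x')$ a non-degenerate quadratic form of index $\lambda_j$. Combining \eqref{eq:microlocalizationstalk} with the triangle $\RG_{\{t\ge 0\}}(\bfk_{\Gamma_\varphi})\to\bfk_{\Gamma_\varphi}\to Rj_*\bfk_{\Gamma_\varphi\cap\{t<0\}}\toone$ reduces the stalk of $\mu_{M\times\{0\}}(\bfk_{\Gamma_\varphi})$ at $(x_0,0;0,1)$ to the shifted cone of $\bfk\to\RG(\{Q_j<0\};\bfk)$; since $\{Q_j<0\}$ is empty or deformation retracts to $S^{\lambda_j-1}$, this stalk is $\bfk[-\lambda_j]$, and its top-degree generator is canonically the orientation line of the negative eigenspace of $Q_j$.

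To globalize, observe that the Morse-Bott trivialization along $C_j$ is unique up to a gauge action preserving $Q_j$, and only the determinant on the negative eigenspace subbundle $T_{C_j}^-M$ affects the top-degree identification. Hence on $\wh{C_j}$,
\begin{equation}
\mu_{M\times\{0\}}(\bfk_{\Gamma_\varphi})|_{\wh{C_j}}\simeq\pi_j^{-1}\ori_{C_j/T_{C_j}^-M}[-s(C_j)]\simeq\pi_j^{-1}\omega_{C_j/T_{C_j}^-M},
\end{equation}
the last isomorphism being the standard identification $\omega_{C/E}\simeq\ori_{C/E}[-r]$ for the zero-section embedding of a rank-$r$ vector bundle. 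The identification $s(C_j)=\lambda_j$ with the value in \eqref{eq:defsC} follows from the computation $\tau(T_pL_2,T_pL_1,T_p(T^*_{\pi(p)}M))=l-2\lambda_j$ in the graph case together with the fact that $d_1=d_2=1/2$ for the sheaves $\bfk_{\{t+\varphi_i\ge 0\}}$. The main obstacle is verifying that the Morse-Bott monodromy truly realizes the orientation sheaf and not some related twist; this can be done either by naturality of the local cohomology computation under the gauge action on the negative eigenspace, or by tensoring with $\bF_2$ to reduce to \pref{thm:constF2} and invoking rigidity of rank-one local systems over a field.
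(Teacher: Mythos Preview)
Your reduction is correct and parallels the paper's: identify $F_i$ explicitly via \pref{thm:Guiquan}, strip off the rank-one twists, and normalize. The paper simply sets $\varphi_1\equiv 0$ rather than translating by $\varphi_2$, but this is cosmetic. Your passage from closed half-spaces to the hypersurfaces $M\times\{0\}$ and $\Gamma_\varphi$ via \pref{prp:muhomss} and \pref{prp:muhomprop}~(iii) is fine.

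The genuine divergence is in the globalization along $C_j$. The paper never computes stalks and never argues about monodromy. Instead it applies \pref{prp:microlocalcoh} directly to the pair $\bigl(\bfk_{\{\varphi(x)+t\ge 0\}},\bfk_{M\times[0,+\infty)}\bigr)$ to obtain
\[
\mu hom(F_2,F_1)|_{\wh{C_j}}\ \simeq\ \pi_j^{-1}\bigl(\RG_{\{\varphi\ge 0\}}(\bfk_M)|_{C_j}\bigr),
\]
which is a globally defined object on $C_j$ from the outset. It then cites the Morse--Bott local cohomology computation of Schapira--Tose \cite[Corollary~1.3]{ST92}, namely $\RG_{\{\varphi\ge 0\}}(\bfk_M)|_{C_j}\simeq\RG_{C_j}(\bfk_{T_{C_j}^-M})|_{C_j}\simeq\omega_{C_j/T_{C_j}^-M}$, and is done. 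This bypasses any chart-by-chart patching.

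Your route is not wrong, but the step you flag as ``the main obstacle'' is indeed where the work lies, and your two proposed fixes are uneven. The naturality argument (the transition functions of the Morse--Bott chart act on the stalk $\bfk[-\lambda_j]$ through the sign of the determinant on the negative eigenspace) is the right idea and can be made rigorous, but as written it is a sketch rather than a proof. The $\bF_2$ alternative, however, does not work: knowing that $\mu hom(F_2,F_1)|_{\wh{C_j}}\otimes_\bfk\bF_2$ is the constant sheaf (which is what \pref{thm:constF2} gives) only tells you the underlying local system has rank one; it says nothing about its monodromy over $\bfk$, so there is no ``rigidity'' to invoke. If you want to keep your stalk-based approach, you should either carry out the naturality argument carefully or, more efficiently, invoke \pref{prp:microlocalcoh} as the paper does to land on $\RG_{\{\varphi\ge 0\}}(\bfk_M)|_{C_j}$ and then quote \cite{ST92}.
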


\begin{proof}
	We may assume that $\varphi_1 \equiv 0, \varphi_2 \equiv \varphi$ and $\cL_i \simeq \bfk_M$ for $i=1,2$.
	Then $F_1 \simeq \bfk_{M \times [0,+\infty)}$ and $F_2 \simeq \bfk_{\{ (x,t) \mid \varphi(x)+t \ge 0 \}}$.
	Take a critical component $C_j$ of $\varphi$ satisfying $\varphi(C_j)=0$.
	Then by \pref{prp:microlocalcoh} we have
	\begin{equation}
	\begin{split}
	& \mu hom(\bfk_{\{ (x,t) \mid \varphi(x)+t \ge 0 \}},\bfk_{M \times [0,+\infty)})|_{\wh{C_j}} \\
	\simeq \ & 
	\pi_j^{-1} \RG_{\{ (x,t) \mid \varphi(x)+t \ge 0 \}}(\bfk_{M \times [0,+\infty)})|_{C_j \times \{0\}} \\
	\simeq \ &
	\pi_j^{-1} \RG_{\{ (x,t) \mid \varphi(x)+t \ge 0 \}}(\bfk_{M \times \{0\}})|_{C_j \times \{0\}} \\
	\simeq \ & 
	\pi_j^{-1} \RG_{\{ \varphi \ge 0 \}}(\bfk_{M})|_{C_j}.
	\end{split}
	\end{equation}
	Moreover, we obtain (cf.\ \cite[Cor.~1.3]{ST92})
	\begin{equation}
	\RG_{\{ \varphi \ge 0 \}}(\bfk_{M})|_{C_j}
	\simeq 
	\RG_{C_j}(\bfk_{T_{C_j}^-M})|_{C_j}
	\simeq 
	\omega_{C_j/T_{C_j}^-M},
	\end{equation}
	which completes the proof.
\end{proof}

In the case $L_1$ and $L_2$ intersect transversally, we also obtain the following:

\begin{proposition}\label{prp:transverse}
	Let $\bfk$ be any field and assume that $L_1$ and $L_2$ intersect transversally.
	For an intersection point $p \in L_1 \cap L_2$ with $f_2(p)-f_1(p)=0$, define $\wh{p}\coloneqq \{(\tau p,-f_{1}(p);\tau) \in T^*M \times T^*\bR \mid \tau >0 \} \subset \Omega_+$ as a special case of \eqref{eq:compconification}.
	Then
	\begin{equation}
	\mu hom(F_2,F_1)|_{\Omega_+}
	\simeq
	\bigoplus_{\substack{p \in L_1 \cap L_2, \\ f_{2}(p)-f_{1}(p)=0}}\bfk_{\wh{p}}[-s(p)],
	\end{equation}
	where $s(p) \in \bZ$ is given by \eqref{eq:defsC} in the statement of \pref{thm:constF2}.
\end{proposition}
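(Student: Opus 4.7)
The plan is to essentially run the proof of \pref{thm:constF2} with the clean components $C_j$ replaced by single intersection points $\{p\}$, and to explain why the use of $\bfk=\bF_2$ can be dropped in the transverse case.

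First, I would verify that \pref{lem:muhomlocconst} still applies: since transverse intersection is a special case of clean intersection (with $T_p(L_1\cap L_2)=0$), the sheaf $\mu hom(F_2,F_1)|_{\Omega_+}$ is supported on $\Lambda_1\cap\Lambda_2 = \bigsqcup_{p\in L_1\cap L_2,\, f_2(p)=f_1(p)}\wh{p}$ and has locally constant cohomology sheaves. Each $\wh{p}$ is a single $\bR_{>0}$-orbit, hence a contractible half-line, so any locally constant sheaf of rank $1$ on $\wh{p}$ is in fact constant over an arbitrary field $\bfk$. This is the essential point that removes the hypothesis $\bfk=\bF_2$.

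Next, for each intersection point $p$ with $f_2(p)-f_1(p)=0$ and the associated point $p'=(p,0;1)\in\wh{p}$, I would follow part (a) of the proof of \pref{thm:constF2} verbatim: pick a Hamiltonian diffeomorphism $\psi$ with compact support that straightens $L_1$ and $L_2$ to graphs $\Gamma_{d\varphi_1}$ and $\Gamma_{d\varphi_2}$ of $C^\infty$-functions near $\psi(p)$, use the sheaf quantization $K$ of the associated homogeneous isotopy $\wh{\psi}=:\chi$, and apply \pref{prp:qctmuhom} together with \pref{prp:qctshift} to reduce the computation of the stalk to $\mu_{N_2}(\bfk_{N_1})_{\chi(p')}$ up to the explicit shift $d_1(p)-d_2(p)-\delta_1+\delta_2$. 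Transversality forces $\varphi:=\varphi_2-\varphi_1$ to be a Morse function at $\pi(\chi(p'))$, so after a linear change of coordinates it takes the form $-x_1^2-\cdots-x_\lambda^2+x_{\lambda+1}^2+\cdots+x_m^2$ with $m=\dim M$, and the computation $\mu_{N_2}(\bfk_{N_1})_{\chi(p')}\simeq\bfk[-\lambda]$ goes through unchanged.

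Then I would repeat part (b) of the proof of \pref{thm:constF2} with $\dim C_j$ replaced by $0$ (and $l=\dim M$) to identify the resulting shift $\lambda+\delta_1-\delta_2$ with $\tfrac{1}{2}\dim M-\tfrac{1}{2}\tau(\lambda_{L_2}(p),\lambda_{L_1}(p),\lambda_\infty(p))$, which is exactly $s(p)$ as defined in \eqref{eq:defsC}. So on each $\wh{p}$ the sheaf $\mu hom(F_2,F_1)|_{\wh{p}}$ is locally constant of rank $1$ concentrated in degree $s(p)$; since $\wh{p}\simeq\bR_{>0}$ is contractible, it is isomorphic to $\bfk_{\wh{p}}[-s(p)]$ over any field. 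Summing over the finitely many intersection points $p$ with $f_2(p)-f_1(p)=0$ yields the claimed decomposition.

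The only potentially delicate step is verifying that the \emph{global} isomorphism $\mu hom(F_2,F_1)|_{\Omega_+}\simeq\bigoplus_p\bfk_{\wh{p}}[-s(p)]$ (and not merely a stalkwise identification) holds; but since the support decomposes as a disjoint union of the connected components $\wh{p}$, and on each such component the locally constant rank-$1$ sheaf in a single degree is canonically the constant sheaf, the gluing is automatic. This is indeed where the transverse case is strictly easier than the clean case of \pref{thm:constF2}: there is no monodromy to worry about because each $\wh{p}$ is simply connected, so the restriction to $\bfk=\bF_2$ becomes unnecessary.
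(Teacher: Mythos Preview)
Your proposal is correct and follows essentially the same route as the paper: the paper's proof simply notes that the support is contained in $\bigsqcup_p \wh{p}$ with each $\wh{p}$ contractible, hence the cohomology sheaves are constant there, and then refers back to the proof of \pref{thm:constF2} for the stalk computation and shift identification. Your write-up just spells out those steps in more detail (invoking \pref{lem:muhomlocconst} explicitly and rerunning parts (a) and (b)); one small slip is that the basepoint should be $p'=(p,-f_1(p);1)\in\wh{p}$ rather than $(p,0;1)$, but this does not affect the argument.
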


\begin{proof}
	In this case, the support of $\mu hom(F_2,F_1)|_{\Omega_+}$ is contained in $\bigsqcup_p \wh{p}$ and each $\wh{p}$ is contractible.
	Hence $\mu hom(F_2,F_1)|_{\Omega_+}$ has constant cohomology sheaves on $\bigsqcup_p \wh{p}$.
	The rest is exactly the same as the proof of \pref{thm:constF2}.
\end{proof}

The relation between the degree $s(C)$ and the Maslov index will be explored in \pref{sec:appasano}.

\begin{theorem}\label{thm:cleanineq}
	Under \pref{as:assumptionclean}, let $L_1 \cap L_2=\bigsqcup_{j=1}^n C_j$ be the decomposition into connected components.
	Recall that for a component $C$ of $L_1 \cap L_2$, one defines $f_{21}(C)\coloneqq f_2(p)-f_1(p)$, taking some $p \in C$.
	Moreover, let $a,b \in \bR$ with $a<b$ or $a \in \bR, b=+\infty$.
	Then 
	\begin{equation}
	\sum_{a \le f_{21}(C_j) <b} \dim_{\bF_2} H^{k-s(C_j)}(C_j;\bF_2)
	\ge
	 \dim_{\bF_2}
	H^k \RG_{M \times [a,b)}((-\infty,b);\cHom^\star(F_2,F_1))
	\end{equation}
	for any $k \in \bZ$, where $s(C_j)$ is given by \eqref{eq:defsC} in the statement of \pref{thm:constF2}.
	In particular,
	\begin{equation}
	\sum_{j=1}^n \dim_{\bF_2} H^{k-s(C_j)}(C_j;\bF_2) \ge
	\dim_{\bF_2} \Hom_{\cT(M)}(F_2,F_1[k])
	\end{equation}
	for any $k \in \bZ$.
	If $L_1$ and $L_2$ intersect transversally, the inequalities hold for any field $\bfk$, not only for $\bF_2$.
\end{theorem}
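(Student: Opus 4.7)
The plan is to combine the Morse-Bott-type inequality of \pref{prp:ineqmuhomver} with the explicit computation of $\mu hom({T_c}_*F_2, F_1)|_{\Omega_+}$ provided by \pref{thm:constF2} (in the clean, $\bF_2$-case) or \pref{prp:transverse} (in the transverse case, arbitrary $\bfk$).

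First, I would verify the hypotheses of \pref{prp:ineqmuhomver}. Under \pref{as:assumptionclean} the intersection $L_1 \cap L_2$ has finitely many connected components $C_1,\dots,C_n$, each compact, on which $f_2-f_1$ is constant; hence $\{f_2(p)-f_1(p) \mid p \in L_1 \cap L_2\}$ is a finite subset of $\bR$, which handles (1) and (2). For (3), observe that ${T_c}_*F_2$ is a simple sheaf quantization of $\wh{L_2}$ with respect to the primitive $f_2 - c$, so \pref{thm:constF2} (resp.\ \pref{prp:transverse}) applied to the pair $({T_c}_*F_2, F_1)$ gives
\begin{equation}
\mu hom({T_c}_*F_2, F_1)|_{\Omega_+}
\simeq
\bigoplus_{f_{21}(C_j)=c} \bfk_{\wh{C_j}}[-s(C_j)].
\end{equation}
Since the conification $\wh{C_j} \to C_j$ is an $\bR_{>0}$-bundle and hence a homotopy equivalence onto the compact manifold $C_j$, each summand has finite-dimensional cohomology, and
\begin{equation}
\dim_\bfk H^k\RG(\Omega_+; \mu hom({T_c}_*F_2, F_1)|_{\Omega_+})
=
\sum_{f_{21}(C_j)=c} \dim_\bfk H^{k-s(C_j)}(C_j;\bfk).
\end{equation}
Substituting into \pref{prp:ineqmuhomver} and summing over $a \le c < b$ yields the first inequality.

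For the ``in particular'' statement I would take $a < \min_j f_{21}(C_j)$ and $b = +\infty$, so that the left-hand side of the first inequality collapses to $\sum_{j=1}^n \dim_\bfk H^{k-s(C_j)}(C_j;\bfk)$. The task is then to identify
\begin{equation}
H^k\RG_{M\times[a,+\infty)}(M\times\bR;\cHom^\star(F_2,F_1))
\simeq
\Hom_{\cT(M)}(F_2,F_1[k]).
\end{equation}
Using the translation compatibility $\cHom^\star(F_2,{T_c}_*F_1) \simeq {T_c}_*\cHom^\star(F_2,F_1)$ (immediate from the definition since $\tilde q_1, \tilde q_2, s$ commute with $T_c$ up to the obvious shift), together with \pref{prp:morD} and \pref{prp:morT}, one rewrites the target as $\varinjlim_{c\to+\infty} H^k\RG_{M\times[-c,+\infty)}(\cHom^\star(F_2,F_1))$. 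Enlarging the support threshold from $-c$ to $a$ affects the cohomology only through contributions of $\MS(\cHom^\star(F_2,F_1)) \cap \Gamma_{dt}$ lying in $[-c,a)$; the microsupport estimate \eqref{eq:SSestHomstar} together with the microlocal Morse lemma (as in the proof of \pref{prp:Homstarineq}) shows there are none, so the colimit stabilizes and agrees with the $a$-supported cohomology.

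The main technical point will be the equivariance of $\cHom^\star$ under ${T_c}$ and the stabilization of the inductive limit; both reduce to the microsupport estimate \eqref{eq:SSestHomstar} and straightforward applications of the microlocal Morse lemma. Once these are in place, the theorem follows by assembling the pieces as above, with the only distinction between the $\bF_2$-case and the general-field case being the input to the computation of $\mu hom({T_c}_*F_2, F_1)|_{\Omega_+}$: \pref{thm:constF2} requires $\bfk=\bF_2$ because a locally constant sheaf of rank one on a possibly non-simply connected $\wh{C_j}$ is constant only then, whereas in the transverse case \pref{prp:transverse} works over any $\bfk$ since each $\wh{p}$ is contractible.
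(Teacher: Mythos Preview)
Your argument for the first inequality matches the paper's: verify (1)--(3) of \pref{prp:ineqmuhomver} using the finiteness of $\{f_{21}(C_j)\}$ and the computation of $\mu hom({T_c}_*F_2,F_1)|_{\Omega_+}$ from \pref{thm:constF2} (or \pref{prp:transverse}), then substitute.

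For the ``in particular'' your route is correct but more laborious than the paper's. You fix $a<\min_j f_{21}(C_j)$, $b=+\infty$, and then prove the \emph{isomorphism}
\[
H^k\RG_{M\times[a,+\infty)}(M\times\bR;\cHom^\star(F_2,F_1))\simeq \Hom_{\cT(M)}(F_2,F_1[k])
\]
via the translation compatibility of $\cHom^\star$ and a stabilization argument for the colimit in \pref{prp:morT}. The paper instead applies the already-proved first inequality to the pair $(F_2,{T_c}_*F_1)$ with $a=0$, $b=+\infty$: by \pref{prp:morD} the right-hand side is $\dim\Hom_{\cD(M)}(F_2,{T_c}_*F_1[k])$, while the left-hand side is a partial sum over $\{j:f_{21}(C_j)+c\ge 0\}$, trivially bounded by the full sum $\sum_{j=1}^n$. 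This gives the desired bound for every $c$, and \pref{prp:morT} finishes. Your approach yields a slightly stronger statement (an isomorphism rather than an inequality), at the cost of having to check that the transition maps in the colimit really are the restriction maps induced by enlarging the support and that these become isomorphisms; the paper sidesteps this by never needing more than the inequality.
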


\begin{proof}
	Since the set $\{ f_2(p)-f_1(p) \mid p \in L_1 \cap L_2 \} \subset \bR$ is finite,  conditions (1) and (2) in \pref{prp:ineqmuhomver} are satisfied.
	Moreover, by \pref{thm:constF2}, condition~(3) is also satisfied.
	Hence the first assertion follows from \pref{prp:ineqmuhomver} and \pref{thm:constF2}.
	For the second assertion, by \pref{prp:morT} it is enough to show that
	\begin{equation}
	\sum_{j=1}^n \dim_{\bF_2} H^{k-s(C_j)}(C_j;\bF_2)
	\ge
	\dim_{\bF_2} \Hom_{\cD(M)}(F_2,{T_c}_*F_1[k])
	\end{equation}
	for any $c \in \bR$ and any $k \in \bZ$.
	This follows from \pref{prp:morD} and the first assertion for the case $a=0, b=+\infty$.
	The last assertion follows from \pref{prp:transverse}.
\end{proof}

\begin{corollary}[{\cite[Thm.~1.3.1]{Nad} and \cite[Thm.~1]{FSS08}}]
	Under \pref{as:assumptionclean} and in the same notation as in \pref{thm:cleanineq}, one has
	\begin{equation}
	\sum_{j=1}^n \sum_{k \in \bZ} \dim_{\bF_2} H^k(C_j;\bF_2)
	\ge
	\sum_{k \in \bZ} \dim_{\bF_2} H^k(M;\bF_2).
	\end{equation}
	If $L_1$ and $L_2$ intersect transversally, then 
	\begin{equation}
	\#(L_1 \cap L_2) \ge
	\sum_{k \in \bZ} \dim H^k(M;\cL)
	\end{equation}
	for any rank $1$ locally constant sheaf $\cL \in \Module(\bfk_M)$ over any field $\bfk$.
\end{corollary}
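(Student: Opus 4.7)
The plan is to deduce this directly from \pref{thm:cleanineq} and \pref{prp:cohlagr}; no new geometric input is needed. The two statements of \pref{thm:cleanineq} give inequalities of Betti numbers in each degree $k \in \bZ$, with a shift by $s(C_j) \in \bZ$ on the Lagrangian side. Since each $s(C_j)$ is an integer, reindexing in $k$ yields
\begin{equation*}
\sum_{j=1}^n \sum_{k \in \bZ} \dim_{\bF_2} H^{k-s(C_j)}(C_j;\bF_2)
=
\sum_{j=1}^n \sum_{k \in \bZ} \dim_{\bF_2} H^k(C_j;\bF_2),
\end{equation*}
so the shift by $s(C_j)$ disappears once one totals over all degrees. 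This is the one small bookkeeping point and essentially the only thing to check.

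For the first inequality, I would take both $F_1$ and $F_2$ to be the \emph{canonical} sheaf quantizations of $\wh{L_1}$ and $\wh{L_2}$, so that the associated rank 1 locally constant sheaves satisfy $\cL_1 \simeq \cL_2 \simeq \bF_{2,M}$, hence $\cL_1 \otimes \cL_2^{\otimes -1} \simeq \bF_{2,M}$. Summing the second inequality of \pref{thm:cleanineq} over $k \in \bZ$, applying the reindexing above to the LHS, and invoking \pref{prp:cohlagr} to identify $\Hom_{\cT(M)}(F_2,F_1[k]) \simeq H^k(M;\bF_2)$ on the RHS, yields the claim.

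For the transverse case, each connected component $C_j$ of $L_1 \cap L_2$ is a single point, so $\sum_{k \in \bZ} \dim H^k(C_j;\bfk) = 1$ and the LHS of the (totalized) inequality equals $\#(L_1 \cap L_2)$. Given a rank 1 locally constant sheaf $\cL$ on $M$, I would take $F_1$ to be the simple sheaf quantization of $\wh{L_1}$ with $(F_1)_+ \simeq \cL$ and $F_2$ to be the canonical sheaf quantization of $\wh{L_2}$, so that $\cL_1 \otimes \cL_2^{\otimes -1} \simeq \cL$. \pref{prp:cohlagr} then gives $\Hom_{\cT(M)}(F_2,F_1[k]) \simeq H^k(M;\cL)$, and the last sentence of \pref{thm:cleanineq} guarantees that the inequality holds over any field in the transverse case. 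There is no substantial obstacle; the entire argument is a direct combination of the two cited results together with the choice of sheaf quantizations that realizes $\cL$ as $\cL_1 \otimes \cL_2^{\otimes -1}$.
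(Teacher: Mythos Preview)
Your proposal is correct and follows essentially the same approach as the paper, which simply states that the corollary follows from \pref{prp:cohlagr} and \pref{thm:cleanineq}. You have merely spelled out the bookkeeping (the reindexing over $k$ to absorb the shifts $s(C_j)$ and the choice of sheaf quantizations realizing the desired $\cL$) that the paper leaves implicit.
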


\begin{proof}
	It follows from \pref{prp:cohlagr} and \pref{thm:cleanineq}.
\end{proof}

\begin{remark}
	Assume $L_1=L_2=L$ and $f_1=f_2$, and set $\cL_i\coloneqq (F_i)_+$ for $i=1,2$.
	Then $\{\mu hom({T_c}_*F_2,F_1)|_{\Omega_+}\}_c$ is concentrated at $c=0$ and $\mu hom(F_2,F_1)|_{\Omega_+} \simeq \pi_{\wh{L}}^{-1}(\cL_2 \otimes \cL_1^{\otimes -1})$, where $\pi_{\wh{L}} \colon \wh{L} \to M$ is the projection, over any field $\bfk$.
	Let $a,b \in \bR$ with $a<b$ or $a \in \bR, b=+\infty$.
	In this case, we obtain a more precise description of the complex $\RG_{M \times [a,b)}(M \times (-\infty,b);\cHom^\star(F_2,F_1))$, not only the Morse--Bott-type inequality.
	Namely, if $a \le 0 <b$, using the concentration, \pref{lem:microsection}, and \pref{prp:Homstarmuhom}, we have
	\begin{equation}
	\begin{split}
	& \RG_{M \times [a,b)}(M \times (-\infty,b);\cHom^\star(F_2,F_1)) \\
	\simeq \ &
	\RG(M \times \{0\};\RG_{M \times [0,+\infty)}(\cHom^\star(F_2,F_1))|_{M \times \{0\}}) \\
	\simeq \ &
	\RG(\Omega_+;\mu hom(F_2,F_1)|_{\Omega_+}) \\
	\simeq \ &
	\RG \left(\wh{L};\pi_{\wh{L}}^{-1}(\cL_2 \otimes \cL_1^{\otimes -1}) \right).
	\end{split}
	\end{equation}
	This is essentially one of the results of Guillermou \cite[Thm.~20.4]{Gu12}.
\end{remark}

\appendix

\section{Degenerate Lagrangian intersections}\label{sec:nonclean}

In this section, using very simple examples, we briefly remark that our method can also deal with degenerate Lagrangian intersections.
Until the end of this section we set $\bfk=\bQ$. 
We shall consider $T^*S^1$ and the intersection of the zero-section $S^1$ and the graph of an exact 1-form $L=\Gamma_{df}$.
Let $F\coloneqq \bfk_{S^1 \times [0,+\infty)}$ be the canonical sheaf quantization associated with the zero-section $S^1$ and $G\coloneqq \bfk_{\{ (x,t) \in S^1 \times \bR \mid f(x)+t \ge 0 \}}$ be that associated with $L$.
Assume that the intersection of $S^1$ and $L$ has only one possibly degenerate component $C$ and it is transversal outside $C$.
Then by \pref{prp:ineqmuhomver} and a similar argument to the proof of \pref{thm:cleanineq} we obtain
\begin{equation}\label{eq:ineqnonclean}
\begin{split}
& \#\{ p \in S^1 \cap L \mid \text{$p$ is a transverse intersection point} \} \\
& + \sum_{k \in \bZ} \dim H^k \RG \left(\Omega_+ \cap \pi^{-1}(C);\mu hom(F,G)|_{\Omega_+ \cap \pi^{-1}(C)} \right) \\
& \qquad \ge
\sum_k \dim \Hom_{\cT(S^1)}(F,G[k])
=\sum_k \dim H^k(S^1;\bfk_{S^1})=2.
\end{split}
\end{equation}
We calculate the ``contribution" $\RG \left(\Omega_+ \cap \pi^{-1}(C);\mu hom(F,G)|_{\Omega_+ \cap \pi^{-1}(C)} \right)$ from $C$ in the following two typical examples.

First we consider the case that the intersection is as in \pref{fg:lag1} in a neighborhood of $C$.
In this case, $G$ is isomorphic to the constant sheaf supported in the shaded closed subset in \pref{fg:sq1} in a neighborhood of $C$.
\begin{figure}[H]
	\begin{minipage}{0.49\hsize}
		\begin{center}
			\begin{tikzpicture}[scale=1.2]
			\draw[->] (-2,0) -- (2,0) node[below] {$x$};
			\draw[->] (0,-1) -- (0,1) node[left]  {$\xi$};
			\draw[thick,-] plot[domain=-2:-1, variable=\x, smooth] ({\x},{(\x +1)^2});
			\draw[ultra thick,-] (-1,0) -- (1,0);
			\draw[thick,-] plot[domain=1:2, variable=\x, smooth] ({\x},{-(\x -1)^2});
			\path (-1.8,1) node[below right] {$L$};
			\path (-1,0) node[below] {$a$};
			\path (1,0) node[below] {$b$};
			\path (0,0) node[below right] {$C$};
			\draw (1,2pt) -- (1,-2pt);
			\draw (-1,2pt) -- (-1,-2pt);
			\end{tikzpicture}
			\caption{$L$ in the first example}
			\label{fg:lag1}
		\end{center}
	\end{minipage}
	\begin{minipage}{0.49\hsize}
		\begin{center}
			\begin{tikzpicture}[scale=1.2]
			\fill [gray!50] plot[domain=-2:-1, variable=\x, smooth] ({\x},{-(\x+1)^3})--(1,0)--plot[domain=1:2, variable=\x, smooth] ({\x},{(\x-1)^3})--(-2,1);
			\draw[->] (-2.2,0) -- (2.2,0) node[below] {$x$};
			\draw[->] (0,-0.7) -- (0,1.3) node[left]  {$t$};
			\draw[thick,-] plot[domain=-2:-1, variable=\x, smooth] ({\x},{-(\x+1)^3});
			\draw[thick,-] (-1,0) -- (1,0);
			\draw[thick,-] plot[domain=1:2, variable=\x, smooth] ({\x},{(\x-1)^3});
			\path (-1,0) node[below] {$a$};
			\path (1,0) node[below] {$b$};
			\draw (1,2pt) -- (1,-2pt);
			\draw (-1,2pt) -- (-1,-2pt);
			\end{tikzpicture}
			\caption{$G$ in the first example}
			\label{fg:sq1}
		\end{center}
	\end{minipage}
\end{figure}
\noindent Hence we find that $\mu hom(F,G)|_{\Omega_+ \cap \pi^{-1}(C)} \simeq \bfk_{[a,b] \times (0,+\infty)}$ and
\begin{equation}
\RG(\Omega_+ \cap \pi^{-1}(C);\mu hom(F,G)|_{\Omega_+ \cap \pi^{-1}(C)})
\simeq
\RG([a,b];\bfk_{[a,b]})
\simeq \bfk.
\end{equation}
Thus in this case the contribution from $C$ is 1 in \eqref{eq:ineqnonclean}, and the cardinality of the transverse intersection points is at least 1, as expected.

Next we consider the case that the intersection is as in \pref{fg:lag2} in a neighborhood of $C$.
The canonical sheaf quantization $G$ associated with $L$ is isomorphic to the constant sheaf supported in the shaded closed subset in \pref{fg:sq2} in a neighborhood of $C$.
\begin{figure}[H]
	\begin{minipage}{0.49\hsize}
		\begin{center}
			\begin{tikzpicture}[scale=1.2]
			\draw[->] (-2,0) -- (2,0) node[below] {$x$};
			\draw[->] (0,-0.8) -- (0,1.5) node[left]  {$\xi$};
			\draw[thick,-] plot[domain=-2:-1, variable=\x, smooth] ({\x},{(\x +1)^2});
			\draw[ultra thick,-] (-1,0) -- (1,0);
			\draw[thick,-] plot[domain=1:2, variable=\x, smooth] ({\x},{(\x -1)^2});
			\path (1.4,1) node[below right] {$L$};
			\path (-1,0) node[below] {$a$};
			\path (1,0) node[below] {$b$};
			\path (0,0) node[below right] {$C$};
			\draw (1,2pt) -- (1,-2pt);
			\draw (-1,2pt) -- (-1,-2pt);
			\end{tikzpicture}
			\caption{$L$ in the second example}
			\label{fg:lag2}
		\end{center}
	\end{minipage}
	\begin{minipage}{0.49\hsize}
		\begin{center}
			\begin{tikzpicture}[scale=1.2]
			\fill [gray!50] plot[domain=-2:-1, variable=\x, smooth] ({\x},{-(\x+1)^3})--(1,0)--plot[domain=1:2, variable=\x, smooth] ({\x},{-(\x-1)^3})--(2,1)--(-2,1);
			\draw[->] (-2.2,0) -- (2.2,0) node[below] {$x$};
			\draw[->] (0,-1) -- (0,1.3) node[left]  {$t$};
			\draw[thick,-] plot[domain=-2:-1, variable=\x, smooth] ({\x},{-(\x+1)^3});
			\draw[thick,-] (-1,0) -- (1,0);
			\draw[thick,-] plot[domain=1:2, variable=\x, smooth] ({\x},{-(\x-1)^3});
			\path (-1,0) node[below] {$a$};
			\path (1,0) node[below] {$b$};
			\draw (1,2pt) -- (1,-2pt);
			\draw (-1,2pt) -- (-1,-2pt);
			\end{tikzpicture}
			\caption{$G$ in the second example}
			\label{fg:sq2}
		\end{center}
	\end{minipage}
\end{figure}
\noindent Therefore, in this case we get $\mu hom(F,G)|_{\Omega_+ \cap \pi^{-1}(C)} \simeq \bfk_{[a,b) \times (0,+\infty)}$ and
\begin{equation}
\RG(\Omega_+ \cap \pi^{-1}(C);\mu hom(F,G)|_{\Omega_+ \cap \pi^{-1}(C)})
\simeq
\RG_c([a,b);\bfk_{[a,b)})
\simeq 0.
\end{equation}
Hence the contribution from $C$ is 0 in \eqref{eq:ineqnonclean} and the cardinality of the transverse intersection points is at least $2$ in the second case.

\begin{remark}
	For $i=1,2$, let $L_i$ be a compact connected exact Lagrangian submanifold and $f_i \colon L_i \to \bR$ be a function satisfying $df_i=\alpha|_{L_i}$.	
	Moreover, let $F_i$ be a simple sheaf quantization associated with $L_i$ and $f_i$.
	\pref{prp:ineqmuhomver} says that the contribution from components on which $f_2(p)-f_1(p)=c$ is encoded in the sheaf $\mu hom({T_c}_*F_2,F_1)|_{\Omega_+}$ (even for possibly degenerate Lagrangian intersections).
	If the intersection is clean along a component $C$, then $\mu hom({T_c}_*F_2,F_1)|_{\Omega_+}$ is locally constant of rank $1$ on the cone of $C$ as in \pref{lem:muhomlocconst}.
	However, as seen in the above examples, if the intersection is degenerate, then $\mu hom({T_c}_*F_2,F_1)|_{\Omega_+}$ is not necessarily locally constant.
\end{remark}

\section{Functoriality of sheaf quantizations}\label{sec:appfunctorial}

In this section we prove the "functoriality" of Guillermou's simple sheaf quantizations with respect to Hamiltonian diffeomorphisms.
We remark that results in this section are independent of the results in \pref{sec:intersection} and not used for the proofs of them.

Let $L$ be a compact connected exact Lagrangian submanifold of $T^*M$ and $f$ be a primitive of the Liouville form $\alpha$.
We define the conification $\wh{L}_f$ of $L$ with respect to $f$ as in \eqref{eq:conification}.
Let $\psi$ be a Hamiltonian diffeomorphism of $T^*M$ and $\phi=(\phi_s)_s \colon T^*M \times I \to T^*M$ be a Hamiltonian isotopy, where $I$ is an open interval containing $[0,1]$, such that $\phi_1=\psi$ and $\phi_0=\id_{T^*M}$.
We denote by $H=(H_s)_s \colon T^*M \times I \to \bR$ the associated Hamiltonian and by $X_s$ the associated Hamiltonian vector field on $T^*M$.
The homogeneous lift $\wh{\phi}$ of $\phi$ is described as follows (see \cite[Prop.~A.6]{GKS}):
\begin{equation}
\wh{\phi}_1(x,t;\xi,\tau)=(x',t+u(x;\xi/\tau);\xi',\tau),
\end{equation}
where $(x';\xi'/\tau)=\phi_1(x;\xi/\tau)=\psi(x;\xi/\tau)$ and $u \colon T^*M \to \bR$ is defined by
\begin{equation}
	u(p)=\int_0^1 (H_s-\alpha(X_s))(\phi_s(p)) \, ds.
\end{equation}
Hence we get
\begin{equation*}
\begin{split}
\wh{\phi}_1(\wh{L}_f)
&=
\left\{
(x',t+u(x;\xi/\tau);\xi',\tau) \; \middle| \;
\begin{split}
& \tau >0, \exists (x;\xi) \ \text{ s.t.}\ (x;\xi/\tau) \in L, \\
& (x';\xi'/\tau)=\psi(x;\xi/\tau), t=-f(x;\xi/\tau)
\end{split}
\right\} \\
&=
\left\{
(x',t';\xi',\tau) \; \middle| \;
\begin{split}
& \tau >0, (x';\xi'/\tau) \in \psi(L), \\
& t'=-f \circ \psi^{-1}(x';\xi'/\tau)+u \circ \psi^{-1}(x';\xi'/\tau)
\end{split}
\right\}.
\end{split}
\end{equation*}
On the other hand, we have equalities
\begin{align*}
\psi^*\alpha-\alpha
&=
\int_0^1 \left(\frac{d}{ds} \phi_s^*\alpha\right) \, ds \\
&=
\int_0^1 \phi_s^* (L_{X_s}\alpha) \, ds \\
&=
\int_0^1 \phi_s^* (d\iota_{X_s}\alpha+\iota_{X_s}d\alpha) \, ds \\
&=
d \int_0^1 \phi_s^* (\alpha(X_s)-H_s)\, ds
=-du.
\end{align*}
Here, for a vector field $X$, $L_X$ denotes the Lie derivative with respect to $X$, and the third equality follows from Cartan's formula.
Moreover, the fourth equality follows from the definition of the Hamiltonian vector field: $d\alpha(X_s,\ast)=-dH_s$.
Hence setting $\tl f\coloneqq (f-u) \circ \psi^{-1} \colon \psi(L) \to \bR$, we get
\begin{equation}
\begin{split}
\alpha|_{\psi(L)}
&=(\psi^{-1})^* (\alpha|_L -du|_L) \\
&=(\psi^{-1})^* (df -du|_L)
=d \tl{f}.
\end{split}
\end{equation}
Thus we find that $\tl{f}$ is a primitive of $\alpha$ on $\psi(L)$ and obtain the following:

\begin{lemma}\label{lem:conicham}
	One has 
	\begin{equation}\label{eq:conicham}
	\wh{\phi}_1(\wh{L}_f)=\wh{\psi(L)}_{\tl{f}}
	\ \subset T^*(M \times \bR).
	\end{equation}
\end{lemma}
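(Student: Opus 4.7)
The plan is simply to compare the two explicit set-theoretic descriptions of the cotangent bundle subsets on each side of \eqref{eq:conicham}. The right-hand side $\wh{\psi(L)}_{\tl{f}}$ is, by definition \eqref{eq:conification} applied to the Lagrangian $\psi(L) \subset T^*M$ and its primitive $\tl{f}$,
\begin{equation*}
\wh{\psi(L)}_{\tl{f}}
=
\left\{(x',t';\xi',\tau) \;\middle|\; \tau>0,\ (x';\xi'/\tau)\in \psi(L),\ t'=-\tl{f}(x';\xi'/\tau)\right\}.
\end{equation*}
The displayed computation preceding the statement already gives exactly this form for $\wh{\phi}_1(\wh{L}_f)$, except with $t' = -f\circ\psi^{-1}(x';\xi'/\tau) + u\circ\psi^{-1}(x';\xi'/\tau)$. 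Since $\tl{f} = (f-u)\circ\psi^{-1}$ by definition, these two conditions on $t'$ coincide, and the equality \eqref{eq:conicham} follows.

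The only nontrivial input, which is also supplied in the excerpt, is that $\tl{f}$ is indeed a primitive of $\alpha|_{\psi(L)}$, so that $\wh{\psi(L)}_{\tl{f}}$ is a well-defined conification in the sense of \eqref{eq:conification}. This is established by the Cartan-calculus identity $\psi^*\alpha - \alpha = -du$: differentiating $\phi_s^*\alpha$ in $s$, applying Cartan's formula $L_{X_s}\alpha = d\iota_{X_s}\alpha + \iota_{X_s}d\alpha$, and using $d\alpha(X_s,\cdot) = -dH_s$ together with the definition of $u$ gives $\frac{d}{ds}\phi_s^*\alpha = d\phi_s^*(\alpha(X_s)-H_s)$, which integrates to $\psi^*\alpha - \alpha = -du$. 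Pulling back via $\psi^{-1}$ yields $d\tl{f} = \alpha|_{\psi(L)}$.

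In short, there is no real obstacle: the lemma is a tautological repackaging of the explicit formula for $\wh{\phi}_1$ from \cite{GKS}, combined with the verification that the shift function $u$ obtained from the homogeneous lift is precisely the correction needed to transport the primitive $f$ from $L$ to $\psi(L)$. The one step that might be called substantive is the Cartan-formula computation for $\psi^*\alpha - \alpha$, but this is a standard symplectic computation and is already written out in the excerpt.
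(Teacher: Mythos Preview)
Your proposal is correct and is precisely the paper's own argument: the lemma is stated as an immediate consequence of the explicit computation of $\wh{\phi}_1(\wh{L}_f)$ and the Cartan-formula verification that $\tl{f}=(f-u)\circ\psi^{-1}$ is a primitive of $\alpha|_{\psi(L)}$, both of which appear just before the lemma statement.
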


\begin{proposition}
	Let $\cL \in \Module(\bfk_M)$ be a locally constant sheaf of rank $1$ and $F_{L}$ be the simple sheaf quantization of $\wh{L}_f$ satisfying ${F_{L}}_+ \simeq \cL$.
	Let $\psi \colon T^*M \to T^*M$ be a Hamiltonian diffeomorphism and $\Psi \colon \Db(M \times \bR) \to \Db(M \times \bR)$ the associated functor (see \eqref{eq:functorPsi}).
	Define $\tl f\coloneqq (f-u) \circ \psi^{-1} \colon \psi(L) \to \bR$ as above and denote by $\wh{\psi(L)}_{\tl f}$ the conification of $\psi(L)$ with respect to $\tl f$.
	Moreover, let $F_{\psi(L)}$ be the simple sheaf quantization of $\wh{\psi(L)}_{\tl f}$ satisfying ${F_{\psi(L)}}_+ \simeq \cL$.
	Then 
	\begin{equation}
	\Psi(F_{L}) \simeq F_{\psi(L)}.
	\end{equation}
\end{proposition}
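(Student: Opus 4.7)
The plan is to apply the uniqueness part of Guillermou's \pref{thm:Guiquan} to $\Psi(F_L) = K_1 \circ F_L$: it suffices to verify that this object lies in $\Db_{\wh{\psi(L)}_{\tl f} \cup T^*_{M \times \bR}(M \times \bR), +}(M \times \bR)$, is simple along $\wh{\psi(L)}_{\tl f}$, and has positive part isomorphic to $\cL$. The desired isomorphism $\Psi(F_L) \simeq F_{\psi(L)}$ will then be automatic from the uniqueness clause.

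For the microsupport, I will apply \pref{prp:SScomp} to $K_1 \circ F_L$, viewing $F_L$ as a kernel with target $\pt$. The properness hypothesis holds because both projections from $\Supp(K_1)$ to $M \times \bR$ are proper, and the non-characteristic hypothesis is automatic since $\wh\phi_1$ preserves $\rT N$, so the zero section of the target factor meets $\Lambda_{\wh\phi_1}$ only along the zero section of the product. One obtains $\mathring{\MS}(\Psi(F_L)) \subset \Lambda_{\wh\phi_1} \circ \wh L_f = \wh\phi_1(\wh L_f)$, which coincides with $\wh{\psi(L)}_{\tl f}$ by \pref{lem:conicham}. Simplicity of $\Psi(F_L)$ along $\wh{\psi(L)}_{\tl f}$ is then inherited from $K_1$ and $F_L$ via \pref{prp:qctshift}.

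To pin down the tails $(\Psi(F_L))_\pm$, I plan to use the whole family $K$ rather than only the slice $K_1$. Fix a relatively compact open subinterval $J$ of $I$ containing $[0,1]$, on which $K$ restricts to a locally bounded object, and form $K \circ F_L \in \Dlb(N \times J)$ by viewing $K$ as a kernel from $N$ to $N \times J$. A parametric application of \pref{prp:SScomp}, with $X_1 = N \times J$, $X_2 = N$, $X_3 = \pt$, shows that outside the zero section the microsupport of $K \circ F_L$ projects to $\bigcup_{s \in J} \pi(\wh\phi_s(\wh L_f)) \times \{s\} \subset N \times J$. By compactness of $L$ and continuity of $s \mapsto \wh\phi_s$, this locus is contained in $M \times (-A',A') \times J$ for some $A' > 0$, so $K \circ F_L$ is locally constant on each of $M \times (A', +\infty) \times J$ and $M \times (-\infty, -A') \times J$. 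Since both horizontal factors $(A',+\infty) \times J$ and $(-\infty,-A') \times J$ are contractible, these restrictions are pulled back from local systems on $M$ and are determined by their slices at $s = 0$. At that slice, $K|_{N \times N \times \{0\}} \simeq \bfk_{\Delta_N}$ gives $(K \circ F_L)|_{s = 0} \simeq F_L$, which restricts to $\cL \boxtimes \bfk_{(A',+\infty)}$ on the positive strip and to $0$ on the negative strip. Propagating these identifications to the slice $s = 1$ yields $(\Psi(F_L))_+ \simeq \cL$ and $(\Psi(F_L))_- \simeq 0$, as required.

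The main technical obstacle I anticipate is making the parametric microsupport estimate for $K \circ F_L$ on $N \times J$ rigorous: one must guarantee that the family-wise composition assembles into a locally bounded object and that the microsupport bound is genuinely uniform in the parameter $s$. This is precisely the issue addressed in the footnote to \pref{subsec:Tamarkin}, and it can be handled using the properness of $\Supp(K) \to N \times I$ together with the factorization $K \simeq \sigma^{-1} K'$ for a suitable $K' \in \Dlb(M \times M \times \bR \times I)$ mentioned there. Once this is in place, the remainder of the argument reduces to the routine identification of rank one local systems along a contractible parameter.
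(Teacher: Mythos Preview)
Your proposal is correct and follows essentially the same approach as the paper: establish the microsupport bound via \pref{prp:SScomp} and \pref{lem:conicham}, then propagate the tail conditions $(\ast)_\pm$ from $s=0$ to $s=1$ by showing that $K \circ F_L$ is locally constant on $M \times (A',+\infty) \times J$ (and on the negative strip), using contractibility of $(A',+\infty) \times J$. The paper's argument is nearly identical, only phrased slightly differently (it writes out the microsupport estimate for $G := (K \circ F_L)|_{M \times (A,+\infty) \times I}$ directly rather than via the projection to $N \times J$).

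One small remark: your verification of simplicity via \pref{prp:qctshift} is unnecessary. In \pref{thm:Guiquan}(ii), simplicity is a \emph{conclusion}, not a hypothesis: any object of $\Db_{\wh{\psi(L)}_{\tl f} \cup T^*_{M \times \bR}(M \times \bR),+}(M \times \bR)$ with the prescribed $(\ast)_+$ is already unique, and hence automatically simple. The paper accordingly omits this step.
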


\begin{proof}
	By \pref{lem:conicham}, we have
	\begin{equation}
	\Psi(F_{L}) \in \Db_{\wh{\psi(L)}_{\tl{f}} \, \cup T^*_{M \times \bR}(M \times \bR)}(M \times \bR).
	\end{equation}
	By the uniqueness of simple sheaf quantizations (\pref{thm:Guiquan}),
	it remains to show that
	\begin{equation}
	\Psi(F_{L})_- \simeq 0, \quad
	\Psi(F_{L})_+ \simeq \cL.
	\end{equation}
	Let $\wh{\phi} \colon \rT(M \times \bR) \times I \to \rT(M \times \bR)$ be the associated homogeneous Hamiltonian isotopy and $K \in \Dlb(M \times \bR \times M \times \bR \times I)$ be the sheaf quantization of $\wh{\phi}$.
	Let $\varepsilon \in \bR_{>0}$ satisfying $[-\varepsilon,1+\varepsilon] \subset I$.
	By the compactness of $L$, there exists $A \in \bR_{>0}$ satisfying
	\begin{equation}
	\bigcup_{s \in [-\varepsilon,1+\varepsilon]} \wh{\phi}_s(\wh{L}_f)
	\subset T^*(M \times (-A,A)).
	\end{equation}
	Replacing $I$ with the relatively compact subinterval $(-\varepsilon,1+\varepsilon)$, we may assume that
	\begin{equation}\label{eq:condham}
	\bigcup_{s \in I} \wh{\phi}_s(\wh{L}_f) \subset T^*(M \times (-A,A))
	\end{equation}
	and $K \in \Db(M \times \bR \times M \times \bR \times I)$ from the beginning.
	Set $G\coloneqq (K \circ F_{L})|_{M \times (A,+\infty) \times I} \in \Db(M \times (A,+\infty) \times I)$.
	We shall show that
	\begin{equation}\label{eq:SSG}
	\MS(G) \subset T^*_{M \times (A,+\infty) \times I}(M \times (A,+\infty) \times I).
	\end{equation}
	First, by \pref{prp:SScomp}, we have
	\begin{equation}
	\MS(K \circ F_{L})
	\subset
	(\Lambda_{\wh{\phi}} \circ \wh{L}_f) \cup T^*_{M \times \bR \times I}(M \times \bR \times I).
	\end{equation}
	By the definition of $\Lambda_{\wh{\phi}}$ (see \eqref{eq:deflambdahatphi}), we obtain
	\begin{equation}\label{eq:est1}
	(\Lambda_{\wh{\phi}} \circ \wh{L}) \cap (T^*_{M \times \bR}(M \times \bR) \times T^* I) 
	\subset
	T^*_{M \times \bR \times I}(M \times \bR \times I).
	\end{equation}
	Denote by $i_s \colon M \times \bR \times \{s\} \hookrightarrow M \times \bR \times I$ the closed embedding for any $s \in I$.
	Then, by the definition of $\Lambda_{\wh{\phi}}$, we also have
	\begin{equation}\label{eq:est2}
	{(i_s)}_d {(i_s)}_\pi^{-1}(\Lambda_{\wh{\phi}} \circ \wh{L}_f)
	=\wh{\phi}_s(\wh{L}_f).
	\end{equation}
	Moreover by \eqref{eq:condham}, we get
	\begin{equation}\label{eq:est3}
	\wh{\phi}_s(\wh{L}_f) \cap T^*(M \times (A,+\infty))
	=\emptyset
	\end{equation}
	for any $s \in I$.
	Hence the inclusion \eqref{eq:SSG} follows from the above estimates~\eqref{eq:est1}, \eqref{eq:est2}, and \eqref{eq:est3}.
	Since $I$ is contractible, we have $G \simeq q^{-1}(G|_{M \times (A,+\infty) \times \{0\}})$,	where $q \colon M \times (A,+\infty) \times I \to M \times (A,+\infty)$ is the projection.
	In particular, we get
	\begin{align*}
	\Psi(F_{L})|_{M \times (A,+\infty)}
	& =
	G|_{M \times (A,+\infty) \times \{1\}} \\
	& \simeq
	G|_{M \times (A,+\infty) \times \{0\}} \\
	& \simeq
	(F_{L})|_{M \times (A,+\infty)}
	\simeq \cL \boxtimes \bfk_{(A,+\infty)}
	\end{align*}
	and $\Psi(F_{L})_+ \simeq \cL$.
	A similar argument shows that $\Psi(F_{L})_- \simeq 0$.
\end{proof}

\section{Relation to grading in Lagrangian Floer cohomology theory, by Tomohiro Asano}\label{sec:appasano}

In this section we relate the absolute grading of $\cHom^\star$ to that of Lagrangian Floer cohomology.

\subsection{Inertia index and Maslov index}

In this subsection we recall some properties of the inertia index and the Maslov index.
First we list some properties of the inertia index.

\begin{proposition}[{\cite[Thm.~A.3.2]{KS90}}]\label{prp:tauproperties}
	Let $E$ be a symplectic vector space and denote by $\cL(E)$ the Lagrangian Grassmannian of $E$. 
	The inertia index $\tau \colon \cL(E)^3\to \bZ$ satisfies the following properties.  
	\begin{enumerate}
		\item For any $\lambda_1,\lambda_2,\lambda_3 \in \cL(E)$,  
		$\tau(\lambda_1,\lambda_2,\lambda_3)
		=-\tau(\lambda_2,\lambda_1,\lambda_3)
		=-\tau(\lambda_1,\lambda_3,\lambda_2)$. 
		\item The inertia index satisfies the ``cocycle condition": for any quadruple $\lambda_1,\lambda_2,\lambda_3,\lambda_4 \in \cL(E)$, 
		\begin{equation} 
		\tau(\lambda_1,\lambda_2,\lambda_3)
		=\tau(\lambda_1,\lambda_2,\lambda_4) 
		+\tau(\lambda_2,\lambda_3,\lambda_4)
		+\tau(\lambda_3,\lambda_1,\lambda_4).
		\end{equation}
		\item If $\lambda_1,\lambda_2,\lambda_3$ move continuously in the Lagrangian Grassmannian $\cL(E)$ so that \linebreak $\dim(\lambda_1 \cap \lambda_2), \allowbreak \dim(\lambda_2 \cap \lambda_3), \allowbreak \dim(\lambda_3 \cap \lambda_1)$ remain constant, then $\tau(\lambda_1,\lambda_2,\lambda_3)$ remains constant.
		\item Let $E'$ be another symplectic vector space, and let $\lambda_1,\lambda_2,\lambda_3$ (resp.\ $\lambda'_1,\lambda'_2,\lambda'_3$) be a triple of Lagrangian subspaces of $E$ (resp.\ $E'$).
		Then 
		\begin{equation}
		\tau_{E \oplus E'}(\lambda_1 \oplus \lambda'_1, \lambda_2 \oplus \lambda'_2, \lambda_3 \oplus \lambda'_3)
		=
		\tau_{E}(\lambda_1,\lambda_2,\lambda_3)
		+
		\tau_{E'}(\lambda'_1,\lambda'_2,\lambda'_3). 
		\end{equation}		
	\end{enumerate}
\end{proposition}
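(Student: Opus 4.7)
The plan is to prove the four properties in turn, saving the cocycle condition (ii) for last since it is the main obstacle. Properties (i) and (iv) follow directly from the definition of $\tau$ as the signature of the quadratic form $q(v_1,v_2,v_3)=\sigma(v_1,v_2)+\sigma(v_2,v_3)+\sigma(v_3,v_1)$ on $\lambda_1\oplus\lambda_2\oplus\lambda_3$. For (i), transposing two entries sends $q$ to $-q$ by antisymmetry of $\sigma$, so the signature changes sign. For (iv), since the symplectic form on $E\oplus E'$ vanishes on cross terms, the quadratic form on $(\lambda_1\oplus\lambda_1')\oplus(\lambda_2\oplus\lambda_2')\oplus(\lambda_3\oplus\lambda_3')$ decomposes as an orthogonal direct sum of the two factor forms, and signatures are additive under orthogonal direct sums.

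For (iii), I would invoke the standard fact that the signature of a continuous family of symmetric bilinear forms is locally constant on strata of constant rank, so it suffices to show that the rank of $q$ depends only on the three pairwise intersection dimensions $\dim(\lambda_i\cap\lambda_j)$. A direct calculation, using that each $\lambda_i$ is Lagrangian and hence self-$\sigma$-orthogonal, identifies the radical of $q$ with
\[
\{(v_1,v_2,v_3)\in\lambda_1\oplus\lambda_2\oplus\lambda_3 \mid v_2-v_3\in\lambda_1,\ v_3-v_1\in\lambda_2,\ v_1-v_2\in\lambda_3\}.
\]
An elementary dimension count, splitting by the three constraints $v_i-v_j\in\lambda_k$, then expresses the nullity of $q$ purely in terms of the three pairwise intersection dimensions, which are constant under the hypothesis of (iii).

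The cocycle condition (ii) is the main obstacle, and I would handle it in two stages. First, using (iii) together with a density/continuity argument, reduce to the generic case where $\lambda_4$ is transverse to each of $\lambda_1,\lambda_2,\lambda_3$: on the complement of this open locus, both sides of (ii) are piecewise locally constant, and one checks that their jumps across each wall agree via a symplectic reduction modulo $\lambda_4\cap\lambda_i$. Second, in the transverse case, use $\lambda_4$ as a polarization to identify $E\simeq\lambda_4\oplus\lambda_4^*$ with $\sigma$ the canonical pairing, write each $\lambda_i$ $(i=1,2,3)$ as the graph of a symmetric form $A_i$ on $\lambda_4^*$, and compute $\tau(\lambda_i,\lambda_j,\lambda_4)=\mathrm{sgn}(A_j-A_i)$ directly from $q$. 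A parallel but longer computation expresses $\tau(\lambda_1,\lambda_2,\lambda_3)$ as the signature of a block quadratic form built from the three differences $A_i-A_j$.

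The hard part will be the resulting linear-algebraic identity: the alternating sum $\mathrm{sgn}(A_2-A_1)+\mathrm{sgn}(A_3-A_2)+\mathrm{sgn}(A_1-A_3)$ is \emph{not} generally zero, and (ii) computes this failure precisely as $\tau(\lambda_1,\lambda_2,\lambda_3)$. I plan to prove it by simultaneously block-diagonalizing the three forms on the subspace where two of them agree and inducting on dimension until the problem reduces to a one-dimensional sign identity that is checked by inspection; alternatively, this step can be sidestepped by invoking Wall's non-additivity formula for signatures. The wall-crossing step in the reduction is what I expect to be the most delicate part of the entire argument, since controlling the matching of jumps across non-transverse configurations requires careful symplectic-reduction bookkeeping.
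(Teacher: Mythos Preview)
The paper does not prove this proposition; it is simply quoted from \cite[Theorem~A.3.2]{KS90} and used as a black box, so there is no proof in the paper to compare your attempt against.

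That said, your plan follows the standard route, essentially the one in \cite[\S A.3]{KS90}. Parts (i) and (iv) are immediate as you say. For (iii), your identification of the radical is correct, and in fact its dimension equals $\dim(\lambda_1\cap\lambda_2)+\dim(\lambda_2\cap\lambda_3)+\dim(\lambda_3\cap\lambda_1)$, so the rank of $q$ really does depend only on the pairwise intersection dimensions. For (ii), writing each $\lambda_i$ as the graph of a symmetric form $A_i$ over a transverse $\lambda_4$ and reducing to a signature identity for the $A_j-A_i$ is exactly how \cite{KS90} proceeds. One remark: the wall-crossing step you flag as ``most delicate'' can be avoided. Once (iii) is in hand, one can first establish (ii) on the open dense locus where $\lambda_4$ is transverse to $\lambda_1,\lambda_2,\lambda_3$, and then for an arbitrary quadruple perturb $\lambda_4$ alone along a path keeping $\dim(\lambda_4\cap\lambda_i)$ constant for each $i$ until transversality to the remaining $\lambda_j$ is achieved; by (iii) none of the four terms changes along such a path. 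This bypasses any explicit matching of jumps.
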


Let $M$ be a compact connected manifold without boundary and $T^*M$ be its cotangent bundle.
Moreover, let $\cL_{T^*M}$ be the fiber bundle over $T^*M$ whose fiber is the Lagrangian Grassmannian, that is, $\cL_{T^*M,p}=\cL(T_pT^*M)$.
Denote by $\lambda_\infty \colon T^*M \to \cL_{T^*M}, p \mapsto T_pT_{\pi(p)}^*M$ be the section which assigns the fiber to $p$. 
A Lagrangian submanifold $L$ of $T^*M$ defines a section $\lambda_L \colon L \to \cL_{T^*M}, p \mapsto T_pL$
over $L$.

\begin{lemma}\label{lem:tauconic}
	For $i=1,2$, let $L_i$ be a compact connected exact Lagrangian submanifold and $f_i \colon L_i \to \bR$ be a function such that $df_i=\alpha|_{L_i}$ and set $\Lambda_i\coloneqq \wh{L_i}_{f_i}$, the conification of $L_i$ with respect to $f_i$. 	
	Let $p \in L_1 \cap L_2$ and assume $f_1(p)=f_2(p)$.
	Set $p'\coloneqq (p,-f_1(p);1) \in \Lambda_1 \cap \Lambda_2 \subset T^*(M \times \bR)$.
	Then 	
	\begin{equation}
	\tau_{T_{p'}T^*(M \times \bR)}(\lambda_{\Lambda_2}(p'),\lambda_{\Lambda_1}(p'),\lambda_\infty(p'))
	=
	\tau_{T_pT^*M}(\lambda_{L_2}(p),\lambda_{L_1}(p),\lambda_\infty(p)).
	\end{equation}
\end{lemma}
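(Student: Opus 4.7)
The plan is to compute both sides in local coordinates at $p'$, then exhibit a linear symplectic automorphism of $T_{p'}T^*(M\times\bR)$ that preserves $\lambda_\infty(p')$ and simultaneously sends $\lambda_{\Lambda_1}(p')$ and $\lambda_{\Lambda_2}(p')$ to direct sums of Lagrangians in $T_pT^*M$ and $T_qT^*\bR$ (where $q=(-f_1(p),1)$), so that property (iv) of \pref{prp:tauproperties} applies.

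First, I would fix local coordinates $(x;\xi)$ on $T^*M$ near $p=(x_0;\xi_0)$ and $(t;\tau)$ on $T^*\bR$, so that $\omega = d\xi\wedge dx + d\tau\wedge dt$. Parametrize $\Lambda_i$ by $(y;\tau) \in L_i \times \bR_{>0}$ via $(y_x, -f_i(y); \tau y_\xi, \tau)$; differentiating at $(p,1)$ and using $df_i = \alpha|_{L_i}$, i.e.\ $df_i(v) = \xi_0 \cdot v_x$ for $v=(v_x,v_\xi) \in T_pL_i$, yields
\[
T_{p'}\Lambda_i
= \{\,((v_x,v_\xi),(-\xi_0\cdot v_x,0)) : (v_x,v_\xi)\in T_pL_i\,\}
\;\oplus\; \bR\cdot((0,\xi_0),(0,1)).
\]
Under the orthogonal decomposition $T_{p'}T^*(M\times\bR) = T_pT^*M \oplus T_qT^*\bR$, one has $\lambda_\infty(p') = \lambda_\infty(p) \oplus \lambda_\infty^\bR(q)$, where $\lambda_\infty^\bR(q)$ denotes the fiber direction of $T^*\bR$ at $q$, but $\lambda_{\Lambda_i}(p')$ manifestly does not split along this decomposition.

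The main step is to construct a symplectic automorphism $\Psi$ of $T_{p'}T^*(M\times\bR)$ that straightens $\lambda_{\Lambda_i}$ while preserving $\lambda_\infty(p')$. A short calculation suggests
\[
\Psi\bigl((v_x,v_\xi),(v_t,v_\tau)\bigr)
:= \bigl((v_x,v_\xi - v_\tau\,\xi_0),(v_t + \xi_0\cdot v_x, v_\tau)\bigr),
\]
which I would verify to be symplectic, to fix $\lambda_\infty(p')$ setwise, and to send $((v_x,v_\xi),(-\xi_0\cdot v_x,0)) \mapsto ((v_x,v_\xi),(0,0))$ together with $((0,\xi_0),(0,1)) \mapsto ((0,0),(0,1))$. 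Consequently $\Psi(\lambda_{\Lambda_i}(p')) = \lambda_{L_i}(p) \oplus \lambda_\infty^\bR(q)$ for $i=1,2$.

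Using invariance of the inertia index under symplectic isomorphisms together with \pref{prp:tauproperties}~(iv), the left-hand side then equals
\[
\tau_{T_pT^*M}(\lambda_{L_2}(p),\lambda_{L_1}(p),\lambda_\infty(p))
+ \tau_{T_qT^*\bR}(\lambda_\infty^\bR(q),\lambda_\infty^\bR(q),\lambda_\infty^\bR(q)).
\]
The second summand vanishes because the defining quadratic form is identically zero on three copies of a single Lagrangian, giving the desired equality. The only delicate point will be sign bookkeeping: identifying the coefficient $-\xi_0\cdot v_x$ in the $t$-component of $T_{p'}\Lambda_i$ and checking that $\Psi$ is compatible with the chosen symplectic convention. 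Once these are pinned down, the remainder is purely linear algebra via \pref{prp:tauproperties}.
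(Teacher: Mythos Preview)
Your argument is correct, but it takes a different route from the paper's. The paper also writes $\lambda_{\Lambda_i}(p')$ in coordinates exactly as you do, but instead of building a single symplectic automorphism, it introduces a one-parameter family $\lambda_{\Lambda_i}(p';r)$ (scaling the $t$- and $\xi$-shifts by $r\in[0,1]$) interpolating between $\lambda_{\Lambda_i}(p')$ and $\lambda_{L_i}(p)\oplus\bR\langle(0;1)\rangle$, and invokes the continuity property \pref{prp:tauproperties}~(iii) to conclude that the inertia index is unchanged along the family; at $r=0$ the same direct-sum argument via \pref{prp:tauproperties}~(iv) finishes. Your explicit symplectic shear $\Psi$ accomplishes the straightening in one stroke and only needs the (immediate from the definition) invariance of $\tau$ under linear symplectomorphisms together with (iv), so you avoid having to verify that the three pairwise intersection dimensions stay constant along the homotopy. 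Conversely, the paper's deformation argument stays entirely within the listed properties (iii)--(iv) and is reusable: the same family $\lambda_{\Lambda_i}(p';r)$ is invoked again in the proof of \pref{prp:grading}. Either approach is fine; your only remaining obligation is the sign/convention check you already flagged, and your computations for $\Psi$ being symplectic, fixing $\lambda_\infty(p')$, and sending the generators where you claim are all straightforward to verify.
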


\begin{proof}
	Take a local homogeneous symplectic coordinate system $(x,t;\xi,\tau)$ on $T^*(M \times \bR)$.
	Using the coordinate system, we identify $T_{p'}T^*(M \times \bR)$ with $\bR^m \times \bR \times \bR^m \times \bR$.	
	In this coordinate system, we get $\lambda_\infty(p')=0 \times 0 \times \bR^m \times \bR$.
	Write $p=(x;\xi)$ using the coordinate system.
	Then $\lambda_{\Lambda_i}(p')$ is spanned by 
	\begin{equation}
	(0,0;\xi,1), 
	(v_i,-Tf_i(v_i);\zeta_i,0) \ \left((v_i,\zeta_i) \in T_{p}L_i \right).
	\end{equation}
	For $r\in [0,1]$, let $\lambda_{\Lambda_i}(p';r)$ be the Lagrangian linear subspace spanned by 
	\begin{equation}
	(0,0;r\xi,1), 
	(v_i,-r \cdot Tf_i(v_i);\zeta_i,0) \ \left((v_i,\zeta_i) \in T_{p}L_i \right).
	\end{equation}
	Then by \pref{prp:tauproperties}(iii) we have
	\begin{equation}
	\tau_{T_{p'}T^*(M \times \bR)}(\lambda_{\Lambda_2}(p'),\lambda_{\Lambda_1}(p'),\lambda_\infty(p'))
	=
	\tau_{T_{p'}T^*(M \times \bR)}(\lambda_{\Lambda_2}(p';r),\lambda_{\Lambda_1}(p';r),\lambda_\infty(p'))
	\end{equation}
	for any $r \in [0,1]$.
	Since $\lambda_{\Lambda_i}(p';0)=\lambda_{L_i}(p)\oplus \bR\langle (0 ;1) \rangle$, by \pref{prp:tauproperties}(iv), we obtain
	\begin{equation}
	\begin{aligned}
	& \tau_{T_{p'}T^*(M \times \bR)}(\lambda_{\Lambda_2}(p'),\lambda_{\Lambda_1}(p'),\lambda_\infty(p')) \\
	= \ & 
	\tau_{T_{p'}T^*(M \times \bR)}(\lambda_{\Lambda_2}(p';0),\lambda_{\Lambda_1}(p';0),\lambda_\infty(p')) \\
	= \ &
	\tau_{T_pT^*M}(\lambda_{L_2}(p), \lambda_{L_1}(p), \lambda_\infty(p)).
	\end{aligned}
	\end{equation}
\end{proof}

Next we recall some properties of the Maslov index (see, for example, Leray~\cite{Leray}, Robbin--Salamon~\cite{RS93}, and de~Gosson~\cite{deGosson}). 

\begin{proposition}\label{prp:muproperties}
	Let $E$ be a symplectic vector space and denote by $\tl{\cL}(E)$ the universal covering of the Lagrangian Grassmannian $\cL(E)$ of $E$. For $\tl{\lambda_i}\in \tl{\cL}(E)  (i\in \bN)$, denote its projection to $\cL(E)$ by $\lambda_i$. 
	The Maslov index $\mu \colon \tl{\cL}(E)^2\to \frac{1}{2}\bZ$ satisfies the following properties.  
	\begin{enumerate}
		\item For any $\tl{\lambda_1}, \tl{\lambda_2} \in\tl{\cL}(E)$, 
		$\mu (\tl{\lambda_1}, \tl{\lambda_2})= -\mu (\tl{\lambda_2}, \tl{\lambda_1})$
		\item The coboundary of $\mu$ is given by $\tau$ : 
		$\mu(\tl{\lambda_1},\tl{\lambda_2})
		+\mu(\tl{\lambda_2},\tl{\lambda_3})
		+\mu(\tl{\lambda_3},\tl{\lambda_1})
		=
		\frac{1}{2}
		\tau(\lambda_1,\lambda_2,\lambda_3)$
		\item If $\tl{\lambda_1}$ and $\tl{\lambda_2}$ move continuously in $\tl{\cL} (E)$ so that $\dim(\lambda_1 \cap \lambda_2)$ remains constant, then $\mu (\tl{\lambda_1},\tl{\lambda_2})$ remains constant. 
		\item For any $\tl{\lambda_1}, \tl{\lambda_2} \in\tl{\cL}(E)$, 
		$\mu (\tl{\lambda_1}, \tl{\lambda_2})\equiv \frac{1}{2}(\dim (\lambda_1\cap\lambda_2)+\frac{1}{2}\dim E) \mod \bZ$. 
		\item Under an isomorphism $\rho \colon \pi_{1}(\cL (E))\simeq \bZ $, 
		for any $\tl{\lambda_1}, \tl{\lambda_2} \in\tl{\cL}(E)$ and $n, m\in \bZ$, 
		$\mu (\rho^{-1}(n)\cdot \tl{\lambda_1}, \rho^{-1}(m) \cdot \tl{\lambda_2})=\mu (\tl{\lambda_1}, \tl{\lambda_2})+n-m$, 
		where dots stand for the covering transformation.
	\end{enumerate}
\end{proposition}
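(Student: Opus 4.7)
The plan is to treat this as a verification that the standard Leray--Kashiwara construction of $\mu$ produces a function satisfying (i)--(v). I would not attempt to axiomatize $\mu$ from (ii) alone (that approach forces one to discuss normalization conventions separately); instead I would fix a concrete model and read the properties off it.

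First I would pick a symplectic basis to identify $E \simeq \bR^{2n}$ and use the Souriau/Cayley identification $\cL(E) \simeq U(n)/O(n)$, under which a Lagrangian $\lambda$ corresponds to a symmetric unitary matrix $W(\lambda)$. The map $\det^2 \colon U(n)/O(n) \to S^1$ realizes an isomorphism $\pi_1(\cL(E)) \simeq \bZ$, so the universal cover $\tl{\cL}(E)$ is the space of pairs $(\lambda,\theta)$ with $\theta \in \bR$ a continuous choice of $\arg \det W(\lambda)^2$. I would then take as definition of $\mu(\tl\lambda_1,\tl\lambda_2)$ the classical formula
\begin{equation}
\mu(\tl\lambda_1,\tl\lambda_2) = \frac{\theta_1-\theta_2}{2\pi} + \frac{1}{2}\dim(\lambda_1\cap\lambda_2) - \frac{1}{4}\dim E \pmod{\text{integers along continuous paths}},
\end{equation}
made rigorous by continuous extension from the transverse stratum, where the second and third terms vanish and $\mu$ is just $(\theta_1-\theta_2)/(2\pi)$.

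With this definition in hand, properties (iv) and (v) are essentially tautologies: (iv) is the residual of the correction term $\tfrac12\dim(\lambda_1\cap\lambda_2)-\tfrac14\dim E$, and (v) follows because a deck transformation on the $i$-th factor shifts $\theta_i$ by $\pm 2\pi$. Property (iii) is the statement that the correction term is constant on the stratum where $\dim(\lambda_1\cap\lambda_2)$ does not jump, while $(\theta_1-\theta_2)/(2\pi)$ is continuous everywhere; so within such a stratum $\mu$ is locally constant, hence constant along the deformation. Property (i) follows directly by swapping the roles of $\tl\lambda_1$ and $\tl\lambda_2$ in the formula, since the correction term is symmetric.

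The substantial part is the coboundary identity (ii). I would first verify it on the open stratum of pairwise transverse triples: there $\tau(\lambda_1,\lambda_2,\lambda_3)$ is the signature of the Kashiwara quadratic form $q(v_1,v_2,v_3)=\sigma(v_1,v_2)+\sigma(v_2,v_3)+\sigma(v_3,v_1)$ on $\lambda_1\oplus\lambda_2\oplus\lambda_3$ (by \pref{prp:tauproperties}), while the left hand side is a purely $S^1$-valued expression in the $\theta_i$'s which can be evaluated by a direct diagonalization argument in the $U(n)/O(n)$ model; the normalization constant is pinned down by testing on a single triple such as $(\bR^n,i\bR^n,e^{i\pi/4}\bR^n)$. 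Once (ii) holds on the transverse open stratum, it extends to all of $\tl{\cL}(E)^3$ by continuity (iii) together with the fact that $\tau$ is constant on strata of fixed intersection dimensions (\pref{prp:tauproperties}~(iii)) and that $\mu$ jumps only by integer amounts across stratum walls in a way dictated by (iv). The main obstacle is exactly this matching of jumps: one must check that the integer discontinuity of $\mu(\tl\lambda_1,\tl\lambda_2)+\mu(\tl\lambda_2,\tl\lambda_3)+\mu(\tl\lambda_3,\tl\lambda_1)$ as one crosses a wall where some $\dim(\lambda_i\cap\lambda_j)$ jumps matches exactly the jump of $\tfrac12\tau$. In practice this bookkeeping is handled most cleanly by reducing to the rank-$1$ transversality jump and invoking the cocycle relation \pref{prp:tauproperties}~(ii) for $\tau$ to propagate the identity; for the purposes of this paper, however, all of (i)--(v) are standard and one can simply appeal to Leray~\cite{Leray}, Robbin--Salamon~\cite{RS93}, and de~Gosson~\cite{deGosson}.
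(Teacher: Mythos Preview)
The paper does not prove this proposition at all: it is stated as a recollection of standard properties of the Maslov index, with the preceding text pointing to Leray~\cite{Leray}, Robbin--Salamon~\cite{RS93}, and de~Gosson~\cite{deGosson}, and the subsequent remark noting only that the convention here is half of de~Gosson's $\mu$ and that (ii) and (iii) determine $\mu$ uniquely. Your proposal therefore goes well beyond what the paper does, by actually sketching the Souriau--Cayley model and indicating how each item would be verified; this is a reasonable outline, and your closing sentence --- that one may simply appeal to the cited references --- is precisely the paper's own ``proof''.
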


\begin{remark}
	Notation for the Maslov index differs between authors. 
	Our $\mu$ is equal to half of the $\mu$ in \cite{deGosson}. 
	Note that (ii) and (iii) of the above proposition determine the function $\mu \colon \tl{\cL}(E)^2\to \frac{1}{2}\bZ$ uniquely. 
\end{remark}

\subsection{Graded Lagrangian submanifolds and Maslov index}

Next we recall the notion of graded Lagrangian submanifolds due to Seidel~\cite{Seidel00}. 
Denote by $\tl{\cL}_{T^*M}$ the fiberwise universal cover of $\cL_{T^*M}$ whose fiber over $p$ is identified with the space of the homotopy classes of paths in $\cL_{T^*M,p}$ from $\lambda_\infty$.
We also denote by  $\mu \colon \tl{\cL}_{T^*M}\times_{T^*M} \tl{\cL}_{T^*M} \to \frac{1}{2} \bZ$ the Maslov index on $T^\ast M$. 
For a Lagrangian submanifold $L$ of $T^*M$, a \emph{grading} of $L$ is a lift $\tl{\lambda} \colon L \to \tl{\cL}_{T^*M}$ of $\lambda_L$.
A \emph{graded Lagrangian submanifold} is a pair $(L,\tl{\lambda})$ consisting of a Lagrangian submanifold $L$ and a grading $\tl{\lambda}$ of $L$.
\begin{equation}
\begin{split}
\xymatrix@C=40pt{
	& \tl{\cL}_{T^*M} \ar[d]^-{\bZ} \\
	& \cL_{T^*M} \ar[d] \\
	L \ \ar@<-0.4ex>@{^{(}->}[r] \ar[ru]_-{\lambda_L} \ar@{-->}[ruu]^-{\tl{\lambda}} & T^*M \ar@/_/[u]_-{\lambda_\infty}
}
\end{split}
\end{equation}

Now, let $(L_1,\tl{\lambda_1})$ and $(L_2,\tl{\lambda_2})$ be graded Lagrangian submanifolds of $T^*M$ intersecting cleanly.
For a connected component $C$ of $L_1 \cap L_2$, we define the absolute grading $\gr(L_2,L_1;C)$ of $C$ by 
taking $p\in C$ and 
\begin{equation}
\gr(L_2,L_1;C)
=
\frac{1}{2} (\dim M -\dim C)- \mu(\tl{\lambda_2}(p),\tl{\lambda_1}(p)),
\end{equation}
which induces the absolute grading of Lagrangian Floer cohomology. 
Note that by \pref{prp:muproperties} (i) and (ii), the grading $\gr(L_2,L_1;C)$ is written as 
\begin{equation}\label{eq:maslovgr}
\begin{split}
& \gr(L_2,L_1;C) \\
& =  
\frac{1}{2} (\dim M -\dim C)
+\mu(\tl{\lambda_1}(p),\lambda_\infty(p))+\mu(\lambda_\infty(p),\tl{\lambda_2}(p))
-\frac{1}{2} \tau(\lambda_2(p),\lambda_1(p),\lambda_\infty(p)) \\
& = 
\frac{1}{2} (\dim M -\dim C)
+\mu(\lambda_\infty(p),\tl{\lambda_2}(p))-\mu(\lambda_\infty(p),\tl{\lambda_1}(p))
-\frac{1}{2} \tau(T_pL_2,T_pL_1,\lambda_\infty(p)),
\end{split}
\end{equation}
where the point $\lambda_\infty(p)$ is regarded as (the homotopy class of) the constant path.

\subsection{Shifts of simple sheaf quantizations}

Let $L$ be a compact exact Lagrangian submanifold of $T^*M$ and $f \colon L \to \bR$ be a primitive of the Liouville $1$-form. 
Denote by $\wh{L} \subset T^*(M \times \bR)$ the conification of $L$ with respect to $f$ and let $F \in \Db(M \times\bR)$ be a simple sheaf quantization of $\wh{L}$.
By \pref{thm:Guiquan}, the object $F$ is simple along $\wh{L}$ and the shift of $F$ at a point of $\wh{L}$ defines a function $d \colon \wh{L} \to \frac{1}{2} \bZ$.
Since $d(c \cdot p')=d(p')$ for any $p' \in \wh{L}$ and $c \in \bR_{>0}$, and $\wh{L}/\bR_{>0}=L$, we also regard $d$ as a function $L \to \frac{1}{2} \bZ$.

\begin{proposition}\label{prp:grading}
	There is a grading $\tl{\lambda} \colon L \to \tl{\cL}_{T^*M}$ such that 
	\begin{equation}\label{eq:grading}
	\mu(\lambda_\infty(p),\tl{\lambda}(p))+\frac{1}{2} (\dim M +1) =d(p),
	\end{equation}
	where $\lambda_\infty$ denotes the constant path.
\end{proposition}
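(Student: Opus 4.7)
The plan is to construct $\tl{\lambda}$ by rigidifying an arbitrary reference grading using the shift function $d$. Since $L$ is a compact exact Lagrangian in $T^*M$, its Maslov class vanishes (\cite{Gu12,Gulec}), so gradings of $L$ exist; the task is to single out the one for which \eqref{eq:grading} is satisfied. Fix any reference grading $\tl{\lambda}_0 \colon L \to \tl{\cL}_{T^*M}$ and define
\begin{equation*}
D(p) := d(p) - \mu\bigl(\lambda_\infty(p), \tl{\lambda}_0(p)\bigr) - \tfrac{1}{2}(\dim M + 1) \in \tfrac{1}{2}\bZ.
\end{equation*}
If I can show that (a) $D(p) \in \bZ$ for every $p$, and (b) $D$ is locally constant on $L$, then the connectedness of $L$ gives $D \equiv n \in \bZ$, and $\tl{\lambda} := \rho^{-1}(-n) \cdot \tl{\lambda}_0$ satisfies \eqref{eq:grading} by \pref{prp:muproperties}~(v).

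For (a), the defining identity of the shift gives $d(p) \equiv \tfrac{1}{2}(\dim M + 1) + \tfrac{1}{2}\tau_\varphi \pmod{\bZ}$ for any $\varphi$ transverse to $\wh{L}$ at a point above $p$, while \pref{prp:muproperties}~(iv) gives $\mu(\lambda_\infty(p), \tl{\lambda}_0(p)) \equiv \tfrac{1}{2}\bigl(\dim(\lambda_\infty(p) \cap T_pL) + \dim M\bigr) \pmod{\bZ}$. The integrality of $D$ then reduces to a parity identification between $\tau_\varphi$ (an inertia index on $T^*(M \times \bR)$) and $\dim(\lambda_\infty(p) \cap T_pL)$, which can be verified by the same coordinate reduction as in the proof of \pref{lem:tauconic}.

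For (b), I would work locally around each $p_0 \in L$. Choose a Hamiltonian diffeomorphism $\psi$ of $T^*M$ that makes $\psi(L)$ a graph $\Gamma_{d\varphi}$ near $\psi(p_0)$; the lifted homogeneous Hamiltonian isotopy $\wh{\phi}$ (\pref{lem:conicham}) has a GKS sheaf quantization $K$, and by \pref{sec:appfunctorial} the functor $\Psi = K \circ (\cdot)$ sends $F$ to a simple sheaf quantization of the graph, locally of the form $\bfk_{\{\varphi(x)+t \ge 0\}} \otimes \pi^{-1}\cL$ with constant shift $1/2$ (by \pref{eg:simpleshift} applied to the hypersurface $\{\varphi(x) + t = 0\}$). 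Propagating back via \pref{prp:qctshift} expresses $d(p)$ in terms of $1/2$, the shift of $K$ at the relevant point (itself controlled by the initial datum $K|_{s=0} \simeq \bfk_{\Delta_{M \times \bR}}$), and an inertia index on $T^*(M \times \bR)$; meanwhile, pulling back the standard horizontal grading of $\psi(L)$ via $\psi$ yields a grading differing from $\tl{\lambda}_0$ near $p_0$ by a Maslov-cocycle correction which, by \pref{prp:muproperties}~(ii), rewrites as one-half of an inertia index on $T^*M$. Applying \pref{lem:tauconic} to match these two inertia indices, the jumps of $d$ and of $\mu(\lambda_\infty, \tl{\lambda}_0)$ across caustics cancel, leaving $D$ locally constant.

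The hard part will be this last bookkeeping step: tracking the shift of $K$ along the isotopy, reorganizing the various inertia indices via the cocycle identity (\pref{prp:tauproperties}~(ii)), and verifying that the dimension-dependent constants conspire to produce exactly $\tfrac{1}{2}(\dim M + 1)$ as in \eqref{eq:grading}. All the ingredients are present in the paper, so the work lies in assembling them into a single identity relating the sheaf-theoretic datum (shift) to the symplectic datum (Maslov index).
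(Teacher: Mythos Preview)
Your overall strategy is sound and would succeed, but it differs substantially from the paper's argument, which is both shorter and more self-contained.

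The paper does not fix a reference grading in advance, nor does it invoke Hamiltonian isotopies, the functoriality of Appendix~B, or the graph-straightening trick. Instead it works directly on $L$: over each small contractible patch it chooses a continuous section $\gamma$ of the Lagrangian Grassmannian transversal to both $\lambda_\infty$ and $\lambda_L$, lifts it to $\gamma'$ on the conified side, and uses the known formula (from \cite[\S8]{Gu12}) expressing $d(p)-d(q)$ as a difference of inertia indices $\tfrac{1}{2}\tau(\lambda_\infty,\lambda_{\wh L},\gamma')$. The Maslov cocycle identity (\pref{prp:muproperties}~(ii)) together with the homotopy of \pref{lem:tauconic} then rewrites this as $\mu(\lambda_\infty(p),\tl\lambda(p))-\mu(\lambda_\infty(q),\tl\lambda(q))$ for any local lift $\tl\lambda$ of $\lambda_L$. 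This shows at once that $d-\mu(\lambda_\infty,\tl\lambda)$ is locally constant, and the mod-$\bZ$ congruence is read off from the transversality of $\gamma$ to both $\lambda_\infty$ and $\lambda_L$ via \pref{prp:muproperties}~(iv). Finally, \pref{prp:muproperties}~(v) lets one adjust $\tl\lambda$ uniquely on each patch so that \eqref{eq:grading} holds, and uniqueness forces these local choices to glue; in particular, the existence of a global grading (vanishing of the Maslov class) is obtained as a byproduct rather than assumed.

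By contrast, your route (b) imports the full GKS quantization of a Hamiltonian isotopy and the functoriality result of \pref{sec:appfunctorial} just to establish a local statement. This works in principle, but the ``hard part'' you flag---tracking the shift $d'$ of $K$ along the isotopy and cancelling it against the Maslov correction---is exactly what the paper sidesteps by using the static transversal section $\gamma$. Your approach has the pedagogical merit of making the graph case explicit, but the paper's direct use of \cite[\S8]{Gu12} is considerably lighter.
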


\begin{proof}
	Let $U_{L} \subset \cL_{T^*M}|_{L}$ be the open subset of the Lagrangian Grassmannian restricted over $L$ consisting of Lagrangian subspaces transversal to $\lambda_\infty$ and $\lambda_{L}$.
	Moreover, let $U \subset U_{L}$ be a connected open subset of $U_{L}$ 
	whose image $\pi (U)$ under $\pi$ is contractible.  
	Note that the set of  such $\pi(U)$ covers $L$. 
	For $p \in L$, we set $p'\coloneqq (p,-f(p);1) \in \wh{L}$. 
	Fix a local section $\gamma \colon \pi(U) \to U$ and take a local section $\gamma' \colon \rho^{-1}(\pi(U))\to \cL_{T^*(M \times \bR)}|_{\wh{L}}$ 
	so that $\gamma'(p')=\gamma(p)\oplus \bR\langle (1;0)\rangle \subset T_p T^* M \oplus T_{(-f(p); 1)} T^* \bR$ holds for every $p\in \pi(U)$. 
	By \pref{prp:muproperties} and the same homotopy $\lambda_{\wh{L}}(p'; r)$ as in the proof of \pref{lem:tauconic}, 
	we get 
	\begin{equation}
	\frac{1}{2} \tau(\lambda_\infty(p'),\lambda_{\wh{L}}(p'),\gamma'(p'))
	=
	\mu(\lambda_\infty(p),\tl{\lambda}(p))
	+\mu(\tl{\lambda}(p),\tl{\gamma}(p))
	+\mu(\tl{\gamma}(p),\lambda_\infty(p)),
	\end{equation}
	where $\tl{\gamma}$ and $\tl{\lambda}$ are locally defined lifts of $\gamma$ and $\lambda_L$. 
	Since the image of $\gamma$ is contained in a connected component of $U_{L}$, both $\mu(\tl{\lambda}(p),\tl{\gamma}(p))$ and $\mu(\tl{\gamma}(p),\lambda_\infty(p))$ are constant on $\pi(U)$.
	The difference between the shifts can be calculated as 
	\begin{equation}
	\begin{split}
	d(p)-d(q)
	& =
	\frac{1}{2}
	\left(
	\tau(\lambda_\infty(p'),\lambda_{\wh{L}}(p'),\gamma(p'))
	-
	\tau(\lambda_\infty(q'),\lambda_{\wh{L}}(q'),\gamma(q'))
	\right) \\
	& =
	\mu(\lambda_\infty(p),\tl{\lambda}(p))-\mu(\lambda_\infty(q),\tl{\lambda}(q)) 
	\end{split}
	\end{equation}
	(see \cite[Section~8]{Gu12}). 
	Hence the function $d(p)-\mu(\lambda_\infty(p),\tl{\lambda}(p))$ is constant on $\pi(U)$ with value in $\frac{1}{2} \bZ$. 
	Moreover, since $\mu(\tl{\lambda}(p),\gamma(p))\equiv \mu(\gamma(p),\lambda_\infty(p))\equiv \frac{1}{2}\dim M \mod \bZ$, we have 
	\begin{equation}
	d(p)-\mu(\lambda_\infty(p),\tl{\lambda}(p))
	\equiv \frac{1}{2} \dim(M \times \bR)=\frac{1}{2} (\dim M +1) \mod \bZ.
	\end{equation}
	By \pref{prp:muproperties}(v), $\tl{\lambda}$ can be  uniquely chosen so that \pref{eq:grading} holds on $\pi (U)$. Such $\tl{\lambda}$ can be glued together on the whole of $L$ and becomes a grading of $L$. 
\end{proof}

Next we consider the degree of $\cHom^{\star}(F_2,F_1)$.
Let $L_1$ and $L_2$ be compact exact Lagrangian submanifolds of $T^*M$ intersecting cleanly.
For $i=1,2$, take a primitive $f_i \colon L_i \to \bR$ of the Liouville 1-form and denote by $\wh{L_i}$ the conification of $L_i$ with respect to $f_i$. 
Let $F_i \in \Db(M \times \bR)$ be a simple sheaf quantization of $\wh{L_i}$.
We also denote by $d_i \colon L_i \to \frac{1}{2} \bZ$ the function which assigns the shift of $F_i$.
Then by \pref{thm:constF2}, the degree associated with a component $C$ of $L_1 \cap L_2$ in $\cHom^\star(F_2,F_1)$ is given by 
\begin{equation}\label{eq:degreeshift}
d_2(p)-d_1(p) 
+\frac{1}{2}(\dim M - \dim C) 
-\frac{1}{2} \tau(T_pL_2,T_pL_1,\lambda_\infty(p))
\end{equation}
for any $p \in C$.
Thus, combining \pref{prp:grading} with \eqref{eq:maslovgr} and \eqref{eq:degreeshift}, we obtain the following theorem.

\begin{theorem}
	For $i=1,2$, let $\tl{\lambda_i} \colon L_i \to \tl{\cL}_{T^*M}$ be the grading of $L_i$ given in \pref{prp:grading}.
	Then the degree associated with a component $C$ of $L_1 \cap L_2$ in $\cHom^\star(F_2,F_1)$ is equal to $\gr(L_2,L_1;C)$.
\end{theorem}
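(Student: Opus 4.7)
The plan is to combine the three formulas already in play and observe that they match term by term; no new geometry is needed.

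First, recall from \eqref{eq:degreeshift} (which follows from Theorem~\ref{thm:constF2}) that the degree associated with a component $C$ of $L_1 \cap L_2$ in $\cHom^\star(F_2,F_1)$ is
\begin{equation}
s(C)=d_2(p)-d_1(p)+\tfrac{1}{2}(\dim M-\dim C)-\tfrac{1}{2}\tau(T_pL_2,T_pL_1,\lambda_\infty(p))
\end{equation}
for any $p \in C$, where $d_i \colon L_i \to \tfrac{1}{2}\bZ$ records the shift of the simple sheaf quantization $F_i$.

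Next, by \pref{prp:grading}, the gradings $\tl{\lambda_i}$ of $L_i$ are characterized by the identity
\begin{equation}
\mu(\lambda_\infty(p),\tl{\lambda_i}(p))+\tfrac{1}{2}(\dim M+1)=d_i(p) \qquad (i=1,2).
\end{equation}
Subtracting the $i=1$ equation from the $i=2$ one at a point $p \in C$ gives
\begin{equation}
\mu(\lambda_\infty(p),\tl{\lambda_2}(p))-\mu(\lambda_\infty(p),\tl{\lambda_1}(p))=d_2(p)-d_1(p),
\end{equation}
since the constant $\tfrac{1}{2}(\dim M+1)$ cancels.

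Finally, substituting this into the rewriting \eqref{eq:maslovgr} of the absolute grading, namely
\begin{equation}
\begin{split}
\gr(L_2,L_1;C)
=\ & \tfrac{1}{2}(\dim M-\dim C)+\mu(\lambda_\infty(p),\tl{\lambda_2}(p))-\mu(\lambda_\infty(p),\tl{\lambda_1}(p)) \\
& -\tfrac{1}{2}\tau(T_pL_2,T_pL_1,\lambda_\infty(p)),
\end{split}
\end{equation}
yields
\begin{equation}
\gr(L_2,L_1;C)=\tfrac{1}{2}(\dim M-\dim C)+d_2(p)-d_1(p)-\tfrac{1}{2}\tau(T_pL_2,T_pL_1,\lambda_\infty(p))=s(C),
\end{equation}
which is the asserted equality. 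The only real content is the normalization of the grading in \pref{prp:grading}, which was set up precisely so that this bookkeeping would work; the present theorem is then a direct comparison of formulas. There is essentially no obstacle beyond checking the normalizations of $\mu$ and $\tau$ that are fixed in \pref{prp:muproperties} and \pref{prp:tauproperties}.
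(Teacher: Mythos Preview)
Your proof is correct and follows precisely the route the paper takes: the paper states the theorem as an immediate consequence of combining \pref{prp:grading} with \eqref{eq:maslovgr} and \eqref{eq:degreeshift}, and you have spelled out exactly that combination term by term.
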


\def\cprime{$'$}

\end{document}